\RequirePackage[table]{xcolor}
\documentclass[a4paper,12pt]{amsart}
\usepackage{amsfonts}
\usepackage{amsmath}
\usepackage{ifthen}
\usepackage{amsrefs}
\usepackage{amsthm}
\usepackage[all,cmtip]{xy}
\usepackage{amssymb}
\usepackage{hyperref}
\usepackage[tocflat]{tocstyle}
\usetocstyle{standard}

\usepackage{hyperref}
\usepackage{url}
\usepackage[shortlabels]{enumitem}
\usepackage[paper=a4paper,left=20mm,right=20mm,top=25mm,bottom=30mm]{geometry}

\usepackage{tikz}
\usetikzlibrary{backgrounds}
\usepackage{tkz-euclide}
\usepackage{xcolor}
\usetikzlibrary{trees,snakes,shapes.geometric}
\usepackage[outline]{contour}
\contourlength{1.5pt}
\usetikzlibrary{positioning,backgrounds}
\usetikzlibrary{%
  matrix,%
  calc,%
  arrows%
}

\usepackage{mathtools}   
\usepackage{graphicx}

\setlist[enumerate]{topsep=0em, itemsep= -0em, parsep = 0 em, label=$(\alph*)$}

\let\emptyset\varnothing
\pagenumbering{arabic}
\nocite{*}

\newcommand{\NN}{\mathbb{N}}

\newcommand{\ZZ}{\mathbb{Z}}

\newcommand{\cA}{\mathcal{A}}
\newcommand{\cB}{\mathcal{B}}
\newcommand{\cC}{\mathcal{C}}

\newcommand{\cE}{\mathcal{E}}
\newcommand{\cF}{\mathcal{F}}
\newcommand{\cO}{\mathcal{O}}

\newcommand{\cW}{\mathcal{W}}
\newcommand{\cX}{\mathcal{X}}
\newcommand{\cY}{\mathcal{Y}}

\DeclareMathOperator{\Rad}{Rad}

\DeclareMathOperator{\rep}{rep}

\DeclareMathOperator{\re}{re}

\DeclareMathOperator{\ql}{ql}
\DeclareMathOperator{\Char}{char}

\DeclareMathOperator{\Hom}{Hom}

\DeclareMathOperator{\rk}{rk}

\DeclareMathOperator{\modd}{mod}
\DeclareMathOperator{\supp}{supp}

\DeclareMathOperator{\Ext}{Ext}

\DeclareMathOperator{\im}{im}

\DeclareMathOperator{\GL}{GL}

\DeclareMathOperator{\El}{El}

\DeclareMathOperator{\EIP}{EIP}

\DeclareMathOperator{\Inj}{Inj}

\DeclareMathOperator{\EKP}{EKP}

\DeclareMathOperator{\dimu}{\underline{dim}}


\let\emptyset\varnothing
\newtheorem{proposition}{Proposition}[section]
\newtheorem{Theorem}[proposition]{Theorem}
\newtheorem{Lemma}[proposition]{Lemma}

\newtheorem{corollary}[proposition]{Corollary}
\newtheorem*{TheoremN}{Theorem}

\newtheorem*{corollaryN}{Corollary}
\newtheorem*{PropositionN}{Proposition}


\newenvironment{example}[1][Example.]{\begin{trivlist}
\item[\hskip \labelsep {\bfseries #1}]}{\end{trivlist}}
\newenvironment{examples}[1][Examples.]{\begin{trivlist}
\item[\hskip \labelsep {\bfseries #1}]}{\end{trivlist}}
\newenvironment{Remark}[1][Remark.]{\begin{trivlist}
\item[\hskip \labelsep {\bfseries #1}]}{\end{trivlist}}

\newenvironment{Definition}[1][Definition.]{\begin{trivlist}
\item[\hskip \labelsep {\bfseries #1}]}{\end{trivlist}}


\begin{document}

\rmfamily


\thispagestyle{empty}
\author{Daniel Bissinger}

\title{Dimension vectors with the equal kernels property} 
\address{Christian-Albrechts-Universit\"at zu Kiel, Ludewig-Meyn-Str. 4, 24098 Kiel, Germany}
\email{bissinger@math.uni-kiel.de}

\maketitle

\begin{abstract}
Let $r \in \NN$, $\Gamma_r$ be the generalized Kronecker quiver with $r$ arrows $\gamma_1,\ldots,\gamma_r \colon 1 \to 2$ and $\delta \in \Delta_+(\Gamma_r)$ be a positive root of $\Gamma_r$. We say that $\delta$ has the equal kernels property if for all $\alpha \in k^r \setminus \{0\}$ and every indecomposable representation $M$ with dimension vector $\dimu M = \delta$ the $k$-linear map $M^\alpha := \sum^r_{i=1} \alpha_i M(\gamma_i) \colon M_1 \to M_2$ is injective. We show that $\delta$ has the equal kernels property if and only if $q_{\Gamma_r}(\delta) + \delta_2 - \delta_1 \geq 1$, where $q_{\Gamma_r} \colon \ZZ^2 \to \ZZ, (x,y) \mapsto x^2 + y^2 - rxy$ denotes the Tits quadratic form of $\Gamma_r$.
\end{abstract}

\section{Introduction}

\noindent Let $k$ be an algebraically closed field of characteristic $p \geq 2$, $r \in \NN_{\geq 2}$ and $E_r$ be a $p$-elementary abelian group of rank $r$. 
In \cite{CFP2} and \cite{CFS1} the authors introduced and studied the subcategories of modules with the equal kernels property and the equal images property to get a better understanding of the, in general, wild category $\modd kE_r$ of finite dimensional $kE_r$-modules.
Let $\langle x_1,\ldots,x_r \rangle_k$ be a $k$-complement of $\Rad^2(kE_r)$ in $\Rad(kE_r)$ and set $x_\alpha := \sum^r_{i=1} \alpha_i x_i$ for all $\alpha \in k^r$. A module $M \in \modd kE_r$ has the \textsf{equal kernels (images) property}, provided the kernel (image) of the nilpotent operator $x^M_\alpha \colon M \to M, m \mapsto x_\alpha \cdot m$ is independent of $\alpha \in k^r \setminus \{0\}$.\\
Although the categories given by these modules appear at first glance much smaller, they have turned out to be wild categories in those cases, where $\modd kE_r$ is wild.\\
In this article we study the category $\modd_{\leq 2} kE_r$ of modules of Loewy length $\leq 2$ that is stably equivalent to the category $\rep(\Gamma_r)$ of finite dimensional representations of the generalized $r$-Kronecker quiver $\Gamma_r$. The quiver $\Gamma_r$ has two vertices $1,2$ and $r$ arrows $\gamma_1,\ldots,\gamma_r \colon 1 \to 2$. We say that $M = (M_1,M_2,\{M(\gamma_i) \colon M_1 \to M_2 \mid i \in \{1,\ldots,r\} \}) \in \rep(\Gamma_r)$ has the \textsf{equal kernels property}, provided the $k$-linear map $M^\alpha := \sum^r_{i=1} \alpha_i M(\gamma_i) \colon M_1 \to M_2$ is injective for all $\alpha \in k^r \setminus \{0\}$. \\
Since the essential image of these representations under the stable equivalence consists precisely of the modules in $\modd_{\leq 2} kE_r$ with the equal kernels property, we can use tools of the hereditary category $\rep(\Gamma_r)$ that are not available in $\modd kE_r$ to study the equal kernels property:\\
We denote by $q_{\Gamma_r} \colon \NN^2_0 \to \ZZ, (x,y) \mapsto x^2+y^2- rxy$ the \textsf{Tits form} of $\Gamma_r$ and by $\Phi_r$ the \textsf{Coxeter matrix} of $\Gamma_r$. Let $\delta \in \Delta_+(\Gamma_r)$ be a positive root of $\Gamma_r$ and $M \in \rep(\Gamma_r)$ be an indecomposable representation with dimension vector $\delta$. By Westwick's Theorem \cite{We1} we know that if $M$ has the equal kernels property, then $\delta_2 - \delta_1 \geq r - 1$ and $\delta_1 \delta_2 \neq 0$ or $\delta = (0,1)$. In general, however, the dimension vector does not give much information as to whether a representation has the equal kernels property as the following two indecomposable representations for $\Gamma_3$ show:
\[
\xymatrix{
k & k & k &k & & &  k & k & k &k \\  
  & k \ar^{\gamma_1}[ul] \ar^{\gamma_2}[u] \ar_>>>>>{\gamma_1}[ur]& k \ar^<<<<{\gamma_3}[ul] \ar_{\gamma_2}[u] \ar_>>>>>{\gamma_3}[ur]& &  & & & k \ar^{\gamma_1}[ul] \ar^{\gamma_2}[u] \ar_<<<<{\gamma_3}[ur]& k.  \ar_{\gamma_2}[u] \ar_{\gamma_3}[ur]
}
\]
Both representations are indecomposable with dimension vector $(2,4)$, the representation on the left hand side has the equal kernels property and the representation on the right hand side does not have the equal kernels property. \\
In this paper we study under which assumptions on $\delta \in \Delta_+(\Gamma_r)$, every indecomposable representation with dimension vector $\delta$ has the equal kernels property. We say that $\delta \in \Delta_+(\Gamma_r)$ has the \textsf{equal kernels property} if every indecomposable representation with dimension vector $\delta$ has the equal kernels property and define
\[ \underline{\EKP}(r) := \{ \delta \in \Delta_+(\Gamma_r) \mid \delta \ \ \text{has the equal kernels property} \}.\]  
Dually, we define the dimension vectors with the equal images property and denote the set of all such dimension vectors by $\underline{\EIP}(r)$.
We prove:

\begin{TheoremN}
Let $\delta \in \Delta_+(\Gamma_r)$ be a positive root. The following statements are equivalent:
\begin{enumerate}
\item [$(i)$] $\delta \in \underline{\EKP}(r) \cup \underline{\EIP}(r)$.
\item [$(ii)$] $q_{\Gamma_r}(\delta) + |\delta_1 - \delta_2| \geq 1$.
\end{enumerate}
\end{TheoremN}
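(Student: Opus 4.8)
The plan is to deduce the stated equivalence from the one-sided statement announced in the abstract, that $\delta \in \underline{\EKP}(r)$ if and only if $c_+(\delta) := q_{\Gamma_r}(\delta) + \delta_2 - \delta_1 \geq 1$. First I would record the duality $M = (M_1,M_2,\{M(\gamma_i)\}) \mapsto (M_2^{*}, M_1^{*}, \{M(\gamma_i)^{*}\})$ on $\rep(\Gamma_r)$: it is exact, sends an indecomposable of dimension vector $(\delta_1,\delta_2)$ to an indecomposable of dimension vector $(\delta_2,\delta_1)$, and turns ``$M^\alpha$ injective'' into ``$(M^\alpha)^{*}$ surjective'', hence interchanges the equal kernels and equal images properties. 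This gives $\delta \in \underline{\EIP}(r)$ if and only if $c_-(\delta) := q_{\Gamma_r}(\delta) + \delta_1 - \delta_2 \geq 1$. Since $q_{\Gamma_r}$ and $|\delta_1 - \delta_2|$ are symmetric in the two coordinates and $\max(c_+(\delta),c_-(\delta)) = q_{\Gamma_r}(\delta) + |\delta_1-\delta_2|$, statement $(i)$ is equivalent to ``$c_+(\delta)\geq 1$ or $c_-(\delta)\geq 1$'', which is exactly $(ii)$. Everything therefore reduces to the equal kernels half.

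The next step is a reformulation: a representation $M$ has the equal kernels property if and only if it contains no subrepresentation of dimension vector $(1,s)$ with $0\leq s \leq r-1$. Indeed $M^\alpha v = 0$ for some $v\neq 0$, $\alpha\neq 0$ exactly when the subrepresentation generated by $v\in M_1$ has the form $(1,s)$ with $s = \dim_k\langle M(\gamma_1)v,\dots,M(\gamma_r)v\rangle \leq r-1$. In particular the class of representations with the equal kernels property is closed under subrepresentations and extensions, so it is a torsion-free class. Combining this with the trichotomy of indecomposables into preprojective, regular and preinjective ones, and with the fact that for $\Gamma_r$ (with $r\geq 2$) the real roots are precisely the preprojective and preinjective dimension vectors, each carrying a unique indecomposable by Kac's theorem, settles the real-root case directly: preprojectives satisfy $\delta_2>\delta_1$ and have the equal kernels property (which one checks independently, e.g.\ by induction through reflection functors), so they lie in $\underline{\EKP}(r)$ with $c_+(\delta)=1+\delta_2-\delta_1\geq 1$; preinjectives satisfy $\delta_1>\delta_2$, so $M^\alpha$ cannot be injective and $c_+(\delta)<1$.

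It remains to treat imaginary roots, where $q_{\Gamma_r}(\delta)\leq 0$ and the condition $c_+(\delta)\geq 1$ reads $\delta_2-\delta_1 \geq 1-q_{\Gamma_r}(\delta)$. Here I would prove the two implications separately. For necessity I must show that if $c_+(\delta)\leq 0$ then some indecomposable $M$ with $\dimu M = \delta$ carries a subrepresentation $U$ of dimension vector $(1,s)$, $s\leq r-1$; for sufficiency I must show that the existence of such a $U$ inside an indecomposable $M$ forces $c_+(\delta)\leq 0$. The numerical engine for both is the additivity $c_+(\delta) = c_+(U) + c_+(V) + \big(\langle U,V\rangle + \langle V,U\rangle\big)$ along $0\to U\to M\to V\to 0$ with $V=M/U$, together with $c_+(U)=s(s+1-r)\leq 0$ and the interpretation of the symmetrized Euler form through $\Hom$ and $\Ext^1$ dimensions.

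The main obstacle is the necessity direction in the range $r-1\leq \delta_2-\delta_1\leq -q_{\Gamma_r}(\delta)$: a dimension count on the incidence variety $\{(M,[v],[\alpha]) : M^\alpha v=0\}$ shows that the \emph{generic} representation of dimension $\delta$ fails the equal kernels property only when $\delta_2-\delta_1\leq r-2$, so in this intermediate range the offending indecomposables are necessarily special and must be built by hand. I would construct them as non-split extensions of a carefully chosen indecomposable $V$ by a bad subobject $U=(1,s)$: choosing $V$ rigid so that $\langle V,U\rangle+\langle U,V\rangle$ is as negative as $c_+(\delta)\leq 0$ permits guarantees $\Ext^1(V,U)\neq 0$, after which one verifies that a general such extension is indecomposable of the prescribed dimension vector $\delta$ by controlling its endomorphism ring via the long exact sequences from $\Hom(-,U)$ and $\Hom(V,-)$. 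For sufficiency I would argue contrapositively from the same additivity formula: indecomposability of $M$ forces $\Ext^1(V,U)\neq 0$ and enough vanishing among the relevant Hom-spaces to drive $c_+(U)+c_+(V)+\big(\langle U,V\rangle+\langle V,U\rangle\big)$ down to $\leq 0$. Carrying out this estimate uniformly, in particular bounding $\dim\Hom(U,V)$, is the delicate point, where I expect to use the torsion-free structure of the equal-kernels class together with Westwick's bound $\delta_2-\delta_1\geq r-1$ to keep the cross term under control.
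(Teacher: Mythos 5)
Your global framing is sound and agrees with the paper: the duality reduction to the equal kernels half, the observation that $M\notin\EKP(r)$ iff $M$ contains a subrepresentation of dimension vector $(1,s)$ with $s\leq r-1$, and the disposal of real roots are all correct. But both halves of the core argument for imaginary roots have genuine gaps. For $(ii)\Rightarrow(i)$, your plan is to bound $c_+(\delta)=c_+(U)+c_+(V)+\bigl(\langle U,V\rangle+\langle V,U\rangle\bigr)$ by estimating $\Hom$- and $\Ext$-spaces between $U=(1,s)$ and $V=M/U$; you flag this as ``the delicate point,'' and it is in fact the entire content of the implication, with no mechanism offered to control $\dim_k\Hom_{\Gamma_r}(U,V)$ and $\dim_k\Hom_{\Gamma_r}(V,U)$ for an arbitrary quotient $V$ (which need not be indecomposable, nor regular). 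The paper avoids this altogether (Theorem~\ref{Proposition:Wiedemann}): from $M$ and $\alpha$ one builds a representation $X$ of the enlarged quiver $\widehat{\Gamma}_r$ with third vertex carrying $\im M^\alpha$, proves $X$ indecomposable, and then Kac's bound $q_{\widehat{\Gamma}_r}(\dimu X)\leq 1$ yields $(\delta_1-\dim_k\im M^\alpha)(\delta_2-\dim_k\im M^\alpha)\leq 1-q_{\Gamma_r}(\delta)$, which forces $M^\alpha$ to be injective whenever $q_{\Gamma_r}(\delta)+\delta_2-\delta_1\geq 1$. No Hom-space estimates are needed.

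For $(i)\Rightarrow(ii)$ your construction of bad indecomposables rests on choosing $V$ \emph{rigid} with $\dimu V=\delta-(1,s)$, and this fails in general: take $r=3$ and $\delta=(3,7)$, where $q_{\Gamma_3}(3,7)+7-3=-5+4=-1\leq 0$, so a bad indecomposable must exist; but $(2,7)$ and $(2,6)$ are not roots at all, and $(2,5)$ is an imaginary root, hence admits no rigid indecomposable. Even dropping rigidity, the indecomposability of a general extension with imaginary $V$ is precisely the hard step, and the paper needs two separate devices for it: the filtration result (Proposition~\ref{Proposition:Filtration}, at most one factor of a regular filtration can fail $\EKP(r)$) combined with Lemma~\ref{Lemma:NonSplit} to force non-split extensions $0\to X_\alpha\to E\to\tau^{-1}_{\Gamma_r}Y\to 0$ to have indecomposable middle term (Corollary~\ref{Corollary:ExistenceNonSplit}), feeding an induction along the Coxeter transformation (Proposition~\ref{Proposition:NotMaximalRank}); and, for the non-extremal dimension vectors $a\leq b<\lfloor aL_r\rfloor$, a completely different mechanism via the universal cover $C_r$ --- thin sink branches and a reflection at a leaf --- to produce an indecomposable whose push-down has a non-injective structure map (Theorem~\ref{Theorem:NotMaximalRank1}). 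Neither of these substitutes is present in your sketch, so as it stands the proposal does not close either implication.
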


The proof of $(i) \implies (ii)$ is given in Section 4. It relies on the use of covering theory, which allows us to reduce the considerations to positive roots $\delta \in \Delta_+(\Gamma_r)$ with $\delta_1 \leq \delta_2$ such that $(\delta_1,\delta_2+1)$ is no longer a positive root of $\Gamma_r$. For these roots we use a homological characterization of the representations with the equal kernels property in $\rep(\Gamma_r)$ to conclude that $q_{\Gamma_r}(\delta)+ |\delta_1 - \delta_2| \leq 0$ implies $\delta \not\in \underline{\EKP}(r) \cup \underline{\EIP}(r)$. \\
Our proof of $(ii) \implies (i)$ is inspired by \cite[4.2.2]{Wiede1}, which has been used by the author to study real root representations for certain families of quivers.

\bigskip
\noindent In Section $5$ we give applications of the proven equivalence. First we draw consequences in $\modd kE_r$ and show:

\begin{corollaryN} Let $\Char(k) = p \geq 2$, $M \in \modd kE_r$ be a $kE_r$-module and assume that $M/\Rad^2_{kE_r}(M)$ is indecomposable such that
\[q_{\Gamma_r}(m_0-m_1,m_1-m_2) + m_0 - 2m_1 + m_2 \geq 1,\] 
where $m_i := \dim_k \Rad^i_{kE_r}(M)$ for all $i \in \{0,1,2\}$. Then $M$ has the equal images property.
\end{corollaryN}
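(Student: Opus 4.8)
The plan is to apply the main Theorem to the representation attached to the top two radical layers of $M$, and then to lift the conclusion to $M$ itself by Nakayama's lemma; the lifting step is where the real work lies.

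First I would set $N := M/\Rad^2_{kE_r}(M)$, a module of Loewy length $\le 2$ that is indecomposable by hypothesis. Under the stable equivalence $\modd_{\le 2}kE_r \simeq \rep(\Gamma_r)$ it corresponds to an indecomposable representation whose vertex spaces are $N/\Rad_{kE_r}(N) = M/\Rad_{kE_r}(M)$ and $\Rad_{kE_r}(N) = \Rad_{kE_r}(M)/\Rad^2_{kE_r}(M)$; hence its dimension vector is $\delta = (m_0-m_1,\, m_1-m_2) \in \Delta_+(\Gamma_r)$, with $\delta_1 = m_0 - m_1 > 0$ because $\Rad_{kE_r}(M) \ne M$. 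Since $\delta_1 - \delta_2 = m_0 - 2m_1 + m_2$, the hypothesis reads $q_{\Gamma_r}(\delta) + (\delta_1 - \delta_2) \ge 1$, and a fortiori $q_{\Gamma_r}(\delta) + |\delta_1 - \delta_2| \ge 1$; by the Theorem, $\delta \in \underline{\EKP}(r) \cup \underline{\EIP}(r)$.

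Next I would isolate the equal images property. As $\delta$ is a positive root, $q_{\Gamma_r}(\delta) \le 1$, so the hypothesis forces $\delta_1 - \delta_2 \ge 1 - q_{\Gamma_r}(\delta) \ge 0$, i.e. $\delta_1 \ge \delta_2$. If $\delta_2 = 0$ then $\delta = (1,0)$ is the only possibility and $\Rad_{kE_r}(M) = 0$, so there is nothing to prove; otherwise $\delta_1 \ge \delta_2 > 0$ contradicts Westwick's bound $\delta_2 - \delta_1 \ge r-1$ valid on $\underline{\EKP}(r)$, whence $\delta \notin \underline{\EKP}(r)$ and therefore $\delta \in \underline{\EIP}(r)$. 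Consequently $N$ has the equal images property, which for a module of Loewy length $\le 2$ means $x_\alpha N = \Rad_{kE_r}(N)$ for all $\alpha \in k^r \setminus \{0\}$; pulling back along $M \twoheadrightarrow N$ this reads
\[ x_\alpha M + \Rad^2_{kE_r}(M) = \Rad_{kE_r}(M) \qquad (\alpha \in k^r \setminus \{0\}). \]

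The main obstacle is to promote this to the identity on $M$ itself. I would use the reformulation that $M$ has the equal images property if and only if $x_\alpha M = \Rad_{kE_r}(M)$ for every $\alpha \ne 0$: since $E_r$ is abelian the elements $x_\alpha$ are central, so each $x_\alpha M = \im(x^M_\alpha)$ is a submodule, and if these coincide with a common submodule $I$ then $\Rad_{kE_r}(M) = \sum_{i=1}^r x_i M \subseteq I \subseteq \Rad_{kE_r}(M)$ gives $I = \Rad_{kE_r}(M)$, the converse being trivial. Because $\Rad^2_{kE_r}(M) = \Rad(kE_r)\cdot \Rad_{kE_r}(M)$, the displayed equality says $x_\alpha M + \Rad(kE_r)\,\Rad_{kE_r}(M) = \Rad_{kE_r}(M)$, and Nakayama's lemma applied to the submodule $x_\alpha M$ of the finite-dimensional module $\Rad_{kE_r}(M)$ yields $x_\alpha M = \Rad_{kE_r}(M)$ for all $\alpha \ne 0$. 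By the reformulation $M$ has the equal images property, completing the argument.
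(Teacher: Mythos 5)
Your argument is correct and follows the same overall strategy as the paper: pass to $N = M/\Rad^2_{kE_r}(M)$, identify it with an indecomposable $\Gamma_r$-representation of dimension vector $(m_0-m_1,m_1-m_2)$, apply the main theorem together with Westwick's bound to land in $\underline{\EIP}(r)$ rather than $\underline{\EKP}(r)$, and then lift back to $M$. The one place you genuinely diverge is the lifting step: the paper disposes of it with a citation to \cite[5.6.4]{Be1} (and to \cite[2.3]{Wor1} for the transfer of the equal images property across the stable equivalence), whereas you prove the reduction from scratch via the reformulation $M \in \EIP \Leftrightarrow x_\alpha M = \Rad_{kE_r}(M)$ for all $\alpha \neq 0$ and Nakayama's lemma applied to $x_\alpha M + \Rad(kE_r)\Rad_{kE_r}(M) = \Rad_{kE_r}(M)$. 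That Nakayama argument is correct and makes the proof self-contained, which is a small gain in transparency; the paper's version is shorter but opaque without Benson's book at hand. A very minor remark: your appeal to Westwick to exclude $\underline{\EKP}(r)$ uses $r\geq 2$ (for $r=1$ the root $(1,1)$ lies in both classes), but the conclusion $\delta\in\underline{\EIP}(r)$ survives in that degenerate case anyway, so nothing is lost.
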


\noindent For the Coxeter orbits of imaginary root of $\Gamma_r$ the description of $\underline{\EKP}(r) \cup \underline{\EIP}(r)$ by the quadratic form $q_{\Gamma_r}$ can be stated as follows:

\begin{corollaryN}
Let $r \geq 3$ and $\cO$ be the Coxeter orbit of an imaginary root. There exist uniquely determined elements $\delta_{\cO} \in \cO$ and $m_{\cO} \in \NN_0$ such that 
\begin{enumerate}
\item[$(i)$] $\underline{\EIP}(r) \cap \cO = \{ \Phi_r^l \delta_{\cO} \mid l \in \NN\}$ and
\item [$(ii)$] $\underline{\EKP}(r) \cap \cO = \{ \Phi_r^{-l} \delta_{\cO} \mid l \geq m_\cO\}$.
\end{enumerate} 
\end{corollaryN}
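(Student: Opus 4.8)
The plan is to reduce everything to the dynamics of the Coxeter matrix $\Phi_r$ on the imaginary cone and then to invoke the Theorem above. First I would record the spectral picture of $\Phi_r$. Since $r\geq 3$, its characteristic polynomial $\lambda^2-(r^2-2)\lambda+1$ has two positive real roots $\lambda_+>1>\lambda_-=\lambda_+^{-1}>0$, so $\Phi_r$ is hyperbolic, and a short computation identifies its two eigenlines with the two null rays of $q_{\Gamma_r}$, one lying in the region $\{\delta_1>\delta_2\}$ and the other in $\{\delta_2>\delta_1\}$; these rays bound the cone of positive imaginary roots. As $\Phi_r$ is the Coxeter transformation it permutes the root system and preserves $q_{\Gamma_r}$. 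Writing $\delta_0$ for a fixed imaginary root with orbit $\cO$ and decomposing $\delta_0=a v_++b v_-$ in the eigenbasis, one has $a,b>0$ because $\delta_0$ lies in the open cone spanned by the boundary rays; hence every $\Phi_r^l\delta_0=a\lambda_+^l v_++b\lambda_-^l v_-$ again lies in the open positive cone. Thus the whole orbit consists of positive imaginary roots, it is parametrized bijectively by $l\in\ZZ$ (hyperbolicity excludes periodicity), and $q_{\Gamma_r}(\Phi_r^l\delta_0)=q_{\Gamma_r}(\delta_0)=:c<0$ is constant on $\cO$.

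The key elementary input is the monotonicity of $g(l):=(\Phi_r^l\delta_0)_2-(\Phi_r^l\delta_0)_1$. From the eigen-decomposition, $g(l)=a\lambda_+^l(v_{+,2}-v_{+,1})+b\lambda_-^l(v_{-,2}-v_{-,1})$; since the two eigenvectors lie on opposite sides of the diagonal the coefficients $v_{\pm,2}-v_{\pm,1}$ have opposite signs, and using $\lambda_-=\lambda_+^{-1}$ this becomes $g(l)=-A\lambda_+^l+B\lambda_+^{-l}$ with $A,B>0$. This is strictly monotone in $l$ with $\lim_{l\to-\infty}g(l)=+\infty$ and $\lim_{l\to+\infty}g(l)=-\infty$; in particular $g$ is injective and unbounded in both directions, and it is precisely the hypothesis $r\geq 3$ (forcing $\lambda_+\neq 1$) that guarantees this.

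Now I would feed this into the Theorem. On $\cO$ the stated equivalence reads $\Phi_r^l\delta_0\in\underline{\EKP}(r)\cup\underline{\EIP}(r)\iff |g(l)|\geq 1-c$, where $1-c\geq 2$. By Westwick's Theorem \cite{We1} and its image-analogue, a root with the equal kernels property satisfies $\delta_2>\delta_1$ and one with the equal images property satisfies $\delta_1>\delta_2$, so the sign of $g(l)$ decides which property can hold, giving $\underline{\EKP}(r)\cap\cO=\{\Phi_r^l\delta_0:g(l)\geq 1-c\}$ and $\underline{\EIP}(r)\cap\cO=\{\Phi_r^l\delta_0:g(l)\leq-(1-c)\}$. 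Strict monotonicity of $g$ converts each into a half-line of indices: there are integers $L_{\mathrm{EKP}}<L_{\mathrm{EIP}}$ with $\underline{\EKP}(r)\cap\cO=\{\Phi_r^l\delta_0:l\leq L_{\mathrm{EKP}}\}$ and $\underline{\EIP}(r)\cap\cO=\{\Phi_r^l\delta_0:l\geq L_{\mathrm{EIP}}\}$. Both half-lines are infinite because $g\to\pm\infty$, and they are disjoint with a finite gap because $1-c>-(1-c)$. The explicit eigendata further shows that the end with $\delta_1>\delta_2$, where the equal images property occurs, is the one reached by the positive powers of $\Phi_r$, which is exactly what produces the asymmetric shape of $(i)$ and $(ii)$.

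Finally I would repackage and check uniqueness. Setting $\delta_\cO:=\Phi_r^{L_{\mathrm{EIP}}-1}\delta_0$ (the last orbit element not in $\underline{\EIP}(r)$) yields $\underline{\EIP}(r)\cap\cO=\{\Phi_r^l\delta_\cO:l\in\NN\}$, and setting $m_\cO:=(L_{\mathrm{EIP}}-1)-L_{\mathrm{EKP}}\in\NN_0$ yields $\underline{\EKP}(r)\cap\cO=\{\Phi_r^{-l}\delta_\cO:l\geq m_\cO\}$. Uniqueness is then forced by the bijective parametrization of $\cO$ by $\ZZ$: any pair satisfying $(i)$ must have $\delta_\cO'$ equal to the unique orbit element whose $\Phi_r$-image starts the (nonempty) equal-images half-line, so $\delta_\cO'=\delta_\cO$, after which $m_\cO'$ is pinned down by $(ii)$. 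The only genuinely structural points are that the orbit never leaves the positive cone and that $g$ is strictly monotone; both drop out of the hyperbolic eigen-decomposition, and the guarantee that \emph{both} the $\underline{\EKP}$ and $\underline{\EIP}$ ends are infinite is exactly where $r\geq 3$ is used.
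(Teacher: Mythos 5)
Your proof is correct, and its skeleton is the same as the paper's: exploit that $q_{\Gamma_r}$ is constant on the orbit, show that the coordinate difference $(\Phi_r^l\delta)_2-(\Phi_r^l\delta)_1$ is strictly monotone and unbounded in both directions along the orbit, and then cut the orbit at the thresholds dictated by Theorem \ref{Theorem:B}, with Westwick's Theorem deciding which side of the diagonal carries $\underline{\EIP}$ and which carries $\underline{\EKP}$. The only real divergence is in how the monotonicity is obtained: you diagonalize $\Phi_r$, identify its eigenlines with the null rays of $q_{\Gamma_r}$, and read off $g(l)=-A\lambda_+^{l}+B\lambda_+^{-l}$, whereas the paper's Lemma \ref{Lemma:Orbit} gets the same strictly increasing sequences $d_l$ and $b_l$ from a two-line computation with the explicit entries of $\Phi_r^{\pm 1}$ (which also immediately gives the quantitative increment $(r^2-2r)a$ used to see unboundedness). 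The spectral route is conceptually cleaner and makes transparent why both half-lines are infinite and why the orbit is parametrized bijectively by $\ZZ$ (points the paper handles by citation or leaves implicit), while the paper's computation is more elementary and directly yields the algorithm for computing $m_{\cO}$ that is used in the subsequent example; your explicit treatment of the uniqueness claim and of the assignment of $\underline{\EIP}$ to the positive powers of $\Phi_r$ fills in details the paper compresses.
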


\noindent As an application we get computable bounds for the invariants $\rk(\cC)$, $\cW(\cC)$, introduced in \cite{Ker1} and \cite{Wor1}, attached to a regular component $\cC$ of the Auslander-Reiten quiver of $\Gamma_r$ for $r \geq 3$.  The proof of our Theorem also reveals the following:

\begin{PropositionN}
Let $r \geq 3$ and $\cC$ be a regular component of the Auslander-Reiten quiver of $\Gamma_r$. There exists a uniquely determined quasi-simple representation $X_{\cC} \in \cC$ such that for all $N \in \cC$ the following statements are equivalent:
\begin{enumerate}
\item[$(i)$] $N$ is a successor of $X_\cC$ in $\cC$.
\item[$(ii)$] Every \textsf{elementary filtration} $($see section $\ref{Section:Elementary})$ of $N$ has only equal kernels representations as factors.
\end{enumerate}
\end{PropositionN}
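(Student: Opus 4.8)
The plan is to read $X_{\cC}$ off the combinatorics of the mouth of $\cC$ and to transfer the dimension-vector criterion of the Theorem to individual modules by means of two soft closure properties. Since $r \geq 3$ the algebra $k\Gamma_r$ is wild hereditary, so $\cC$ is of type $\ZZ A_\infty$, and I would use the standard description of such a component: its quasi-simple modules form a single $\tau$-line $(w_i)_{i \in \ZZ}$ with $w_i = \tau^{-i} w_0$; the regular submodules of any indecomposable $N \in \cC$ form a chain (so $N$ is uniserial in the regular sense); a module $N$ with quasi-socle $w_i$ and $\ql(N) = l$ has regular socle factors $w_i, w_{i+1}, \dots, w_{i+l-1}$; and a short combinatorial check in $\ZZ A_\infty$ shows that the successors of $w_i$ are exactly the indecomposables whose quasi-socle lies in $\{w_j \mid j \geq i\}$. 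The workhorse will be the elementary observation that the equal kernels property is closed under subrepresentations and under extensions: restricting $M^\alpha$ to a subrepresentation preserves injectivity, and for $0 \to A \to M \to B \to 0$ with $A,B$ equal kernels, chasing a vector of $\ker M^\alpha$ through the sequence shows $M^\alpha$ is injective for every $\alpha$.

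Next I would locate the threshold. Along the mouth $\dimu w_i = \Phi_r^{-i} \dimu w_0$, and since $q_{\Gamma_r}$ is $\Phi_r$-invariant while $(\dimu w_i)_2 - (\dimu w_i)_1 \to +\infty$ as $i \to \infty$ and $\to -\infty$ as $i \to -\infty$, the criterion of the Theorem shows $\dimu w_i \in \underline{\EKP}(r)$ for $i \gg 0$, whence $w_i$ has equal kernels for all large $i$; dually, for $i \ll 0$ one has $(\dimu w_i)_2 - (\dimu w_i)_1 < r-1$, so $w_i$ cannot have equal kernels by Westwick's bound \cite{We1}. The crucial extra input, which is exactly what the proof of the Theorem supplies, is that having equal kernels is \emph{monotone} along $\tau^{-1}$ on the mouth: if $w_i$ has equal kernels then so does $w_{i+1} = \tau^{-1} w_i$. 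Granting this, there is a unique index $i_0$ with $w_{i_0}$ having equal kernels and $w_{i_0 - 1}$ not, and I set $X_{\cC} := w_{i_0}$.

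For (i) $\Rightarrow$ (ii) let $N$ be a successor of $X_{\cC}$, so its quasi-socle is $w_i$ with $i \geq i_0$ and its regular socle factors $w_i, \dots, w_{i+l-1}$ all have equal kernels by monotonicity. Given any elementary filtration (see Section \ref{Section:Elementary}) $0 = N_0 \subset \cdots \subset N_t = N$, uniseriality forces it to be a subchain of the regular socle chain, so each factor $N_s / N_{s-1}$ is an indecomposable whose regular socle factors form a contiguous subfamily of $w_i, \dots, w_{i+l-1}$; being an iterated extension of equal kernels quasi-simples, each such factor has equal kernels by closure under extensions. Hence every elementary filtration of $N$ has only equal kernels factors. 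Conversely, if $N$ is not a successor of $X_{\cC}$, its quasi-socle is some $w_i$ with $i < i_0$, and then $w_i$ does not have equal kernels; the regular socle filtration of $N$ is an elementary filtration, since quasi-simple modules are elementary, and its bottom factor is the non-equal-kernels module $w_i$, so (ii) fails. Finally, uniqueness of $X_{\cC}$ is automatic: the set $\{N \in \cC \mid N \text{ satisfies (ii)}\}$ is intrinsic and equals the successor set of $X_{\cC}$, and distinct quasi-simple modules of $\cC$ have distinct successor sets.

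I expect the monotonicity statement of the second paragraph to be the main obstacle: it is precisely the passage from the dimension-vector criterion of the Theorem to the behaviour of the \emph{individual} quasi-simple module $w_i$, and for indices below the threshold a module $w_i$ with $\dimu w_i \notin \underline{\EKP}(r)$ need not a priori fail to have equal kernels, since once $(\dimu w_i)_2 - (\dimu w_i)_1 \geq r-1$ the dimension vector alone permits both possibilities (this is the phenomenon illustrated by the two $(2,4)$-representations in the introduction). Establishing that $w_i$ genuinely loses the equal kernels property below $i_0$ and retains it above is the delicate inductive content for which the proof of the Theorem must be invoked; everything else is a formal consequence of uniseriality in $\ZZ A_\infty$ together with the two closure properties of the equal kernels class.
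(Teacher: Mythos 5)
There is a genuine gap, and it sits exactly where you put the weight: in the passage from the ray structure of $\cC$ to arbitrary elementary filtrations. Your argument for $(i)\Rightarrow(ii)$ assumes that an indecomposable $N\in\cC$ is ``uniserial in the regular sense'' and that every $\El(r)$-filtration of $N$ is a subchain of the quasi-socle chain $w_i\subset w_i[2]\subset\cdots$. Both claims fail for wild components of type $\ZZ\mathbb{A}_\infty$: the regular submodules of $N$ do not form a chain (that is a feature of tubes, not of wild components), and the factors of an elementary filtration need not lie in $\cC$ at all. The paper's Example (ii) in Section \ref{Section:Elementary} exhibits an indecomposable of dimension vector $(4,6)$ with an elementary filtration whose factors have dimension vectors $(2,5)$ and $(2,1)$ --- not a contiguous block of quasi-socle factors. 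Likewise, in the converse direction your ``regular socle filtration'' is not in general an elementary filtration, because quasi-simple representations of large dimension are usually not elementary (elementarity forbids \emph{any} short exact sequence with regular nonzero ends, not just those inside $\cC$; Ringel's classification for $r=3$ shows elementary modules have very restricted dimension vectors).

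As a consequence, your choice of threshold $X_\cC:=w_{i_0}$, the first quasi-simple with the equal kernels property, is the wrong module. The paper's Example (i) gives an indecomposable $N$ with $\dimu N=(3,6)$ that lies in $\EKP(3)$ --- so its quasi-socle is an equal kernels quasi-simple and $N$ is a successor of your $w_{i_0}$ --- yet $N$ admits no $\cX$-filtration, so condition $(ii)$ fails for it. The correct $X_\cC$ is in general a proper successor of $w_{i_0}$. The paper obtains it by a quantitative argument that your proposal does not replace: one first locates a quasi-simple $X$ whose dimension vector $(u,v)$ satisfies $v=\lfloor uL_r\rfloor$ (Corollaries \ref{Corollary:DimensionVectorIsMaximal} and \ref{Corollary:Coxeterorbits}), shows via Lemma \ref{Lemma:DimensionShiftSum} that this maximality persists for all modules in the successor cone of $\tau^{-2}_{\Gamma_r}X$, and then invokes Proposition \ref{Proposition:Filtration} / Corollary \ref{Corollary:FiltrationElementary} --- the statement that for such dimension vectors an \emph{arbitrary} filtration with regular indecomposable factors has at most one non-equal-kernels factor, necessarily some $X_\alpha$, which disappears after applying $\tau^{-1}_{\Gamma_r}$. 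That quadratic-form control of arbitrary elementary filtrations is the essential content that cannot be recovered from closure of $\EKP(r)$ under subrepresentations, extensions and $\tau^{-1}_{\Gamma_r}$ alone.
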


\section{Preliminaries}

\subsection{Kac's Theorem}
\label{Section:Kac's Theorem}

\noindent Let $Q$ be an acyclic quiver without loops with finite vertex set $Q_0 = \{1,\ldots,n\}$. For $i \in Q_0$ we define $i^+_Q := \{ j \in Q_0 \mid \exists i \to j\}$, $i^-_Q := \{ j \in Q_0 \mid \exists j \to i \}$ and $n_Q(i) := i^+_Q \cup i^-_Q$. The quiver $Q$ defines a $($non-symmetric$)$ bilinear form $\langle \ , \ \rangle_Q \colon \ZZ^n \times \ZZ^n \to \ZZ$, given by 
\[ (x,y) \mapsto \sum^n_{i=1} x_i y_i - \sum_{i \to j \in Q_1} x_i y_j,\]
which coincides with the \textsf{Euler-Ringel form} on the Grothendieck group of $Q$, i.e. for $X,Y \in \rep(Q)$ we have 
\[ \langle \dimu X, \dimu Y \rangle_Q = \dim_k \Hom_Q(X,Y) - \dim_k \Ext^1_Q(X,Y).\]
The \textsf{Tits quadratic form} is defined by $q_Q(x) := \langle x,x \rangle_Q$. We denote the \textsf{symmetric form} corresponding to $\langle \ , \ \rangle_Q$ by $( \ , \ )_{Q}$, i.e. $(x,y)_Q := \langle x,y \rangle_Q + \langle y,x \rangle_Q$.

\noindent For each $i \in Q_0$ we have an associated reflection $r_i \colon \ZZ^n \to \ZZ^n$ given by $r_i(x) := x - (x,e_i)_Q e_i$, where $e_i \in \ZZ^n$ denotes the $i$-th canonical basis vector. By definition we have
\[ r_i(x)_j = \begin{cases}
x_j,\text{for} \ j \neq i \\ 
-x_i + \sum_{l \to i, i \to l} x_l,\text{for} \ j = i.
\end{cases}
\]
We denote by $W_Q := \langle r_i \mid i \in \{1,\ldots,n\} \rangle$ the \textsf{Weyl group} associated to $Q$, by $\Pi_Q := \{e_1,\ldots,e_n\}$  the set of \textsf{simple roots} and for $\delta \in \ZZ^n$ we define $\supp(\delta) := \{ i \in \{1,\ldots,n\} \mid \delta_i \neq 0 \}$. The set
\[F_Q := \{ \delta \in \NN^n_0 \setminus \{0\} \mid \forall i \in \{1,\ldots,n\} \colon (\delta,e_i)_Q \leq 0, \supp(\delta) \ \text{is connected}\}\] is called the \textsf{fundamental domain} of the Weyl group action.

\begin{Definition}
We define 
\[ \Delta_+(Q) = \Delta^{\re}_+(Q)\sqcup \Delta^{\im}_+(Q),\]
where $\Delta^{\re}_+(Q) := W_Q \Pi_Q \cap \NN^n_0$ and $\Delta^{\im}_+(Q) := W_Q F_Q$.
The elements in $\Delta_+(Q)$ are called \textsf{(positive) roots} of $q_Q$ or roots of $Q$.
\end{Definition}

\noindent We formulate a simplified version of Kac's Theorem that suffices for our purposes.

\begin{Theorem}[Kac's Theorem]  \label{Theorem:Kac}\cite[Theorem B]{Kac3}, \cite[Theorem \S 1.10]{Kac2}, \cite[§6]{CB1}
Let $k$ be an algebraically closed field and $Q$ an acyclic finite, connected quiver without loops and vertex set $\{1,\ldots,n\}$. Let $\delta \in \NN^n_0$. 

\begin{enumerate}
\item[$(i)$] There exists an indecomposable representation in $\rep(Q)$ with dimension vector $\delta$ if and only if $\delta \in \Delta_+(Q)$. 
\item[$(ii)$] If $\delta \in \Delta_+(Q)$, then $q_{Q}(\delta) \leq 1$.
\item[$(iii)$] If $\delta \in \Delta^{\re}_+(Q)$, then there exists a unique	indecomposable representation $M_\delta \in \rep(Q)$ with $\dimu M = \delta$.
\item[$(iv)$] If $\delta \in \Delta^{\im}_+(Q)$, there exist infinitely many, pairwise non-isomorphic indecomposable representations with dimension vector $\delta$.
\end{enumerate} 
\end{Theorem}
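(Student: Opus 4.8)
The statement bundles together facts of very different depth, so the plan is to dispatch them in order of difficulty. Part $(ii)$ is purely combinatorial. Since each reflection $r_i$ preserves the symmetric form $(\,,\,)_Q$, the whole group $W_Q$ preserves $q_Q$, and it suffices to bound $q_Q$ on the two classes of roots. For a real root $\delta = w e_i$ I would use $q_Q(\delta) = q_Q(e_i) = 1$, the latter because $Q$ has no loops. For an imaginary root $\delta = w\beta$ with $\beta \in F_Q$, invariance reduces the claim to $\beta$, and there I would expand $2q_Q(\beta) = (\beta,\beta)_Q = \sum_i \beta_i\,(\beta,e_i)_Q \leq 0$, using $\beta_i \geq 0$ together with the defining inequalities $(\beta,e_i)_Q \leq 0$ of the fundamental domain; hence $q_Q(\beta)\leq 0 \leq 1$.

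For part $(iii)$ I would use the reflection functors of Bernstein--Gelfand--Ponomarev. The simple module $S_i$ is the unique indecomposable of dimension $e_i$, and at a sink or source the corresponding functor restricts to a bijection on indecomposables (away from that simple), acting on dimension vectors precisely by $r_i$. Writing a real root as $\delta = r_{i_1}\cdots r_{i_t} e_{i_{t+1}}$ through a suitable word and checking that the intermediate dimension vectors remain positive, I would transport the unique indecomposable of dimension $e_{i_{t+1}}$ to a unique indecomposable of dimension $\delta$. The only care needed is that reflections be applied at admissible sinks or sources, which one arranges by reorienting $Q$ compatibly with the chosen word.

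Parts $(i)$ and $(iv)$ carry the real weight, and for both I would work in the representation variety $R(Q,\delta)$ with its $\GL_\delta$-action. The central identity, specializing the Euler--Ringel form stated above to $X = Y = M$, is $\dim\End(M) - \dim\Ext^1_Q(M,M) = q_Q(\delta)$. This already settles the infinitude in $(iv)$: if $\delta$ is imaginary then $q_Q(\delta)\leq 0$, so any indecomposable $M$, for which $\dim\End(M)\geq 1$, satisfies $\dim\Ext^1_Q(M,M)\geq 1 - q_Q(\delta)\geq 1$; the non-vanishing self-extensions produce a positive-dimensional family of pairwise non-isomorphic indecomposables, hence infinitely many. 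The easy half of $(i)$, that an indecomposable forces $\delta$ to be a root, I would obtain by applying reflection functors to push $\delta$ down either to a simple root or into the fundamental domain, using constructibility of the indecomposable locus to control the reduction.

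The genuine obstacle is the remaining half of $(i)$: producing at least one indecomposable of dimension $\delta$ for every imaginary root lying in the fundamental domain. Reflection functors reduce the real case and propagate this one across Coxeter orbits, but the construction of a single indecomposable here is exactly Kac's deep input and cannot be reached by elementary functorial means. The standard route is arithmetic: one counts absolutely indecomposable representations over finite fields $\FF_q$, shows the count is a polynomial in $q$ whose value at $q=1$ is the multiplicity of the root $\delta$ in the associated Kac--Moody algebra, and deduces non-vanishing from positivity of that multiplicity. I would follow either this finite-field and Hall-algebra argument or Crawley--Boevey's more geometric treatment in \cite{CB1}; this is the step that places the theorem beyond elementary representation theory, which is why it is quoted rather than reproved.
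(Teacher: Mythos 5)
The paper offers no proof of this statement: it is quoted verbatim from the literature (Kac, and Crawley-Boevey's exposition), so there is no internal argument to compare yours against. Your outline is essentially the standard proof from those sources, and you correctly isolate the genuinely deep step --- producing one indecomposable for each imaginary root in the fundamental domain --- as the part that needs Kac's finite-field count (or Crawley-Boevey's treatment). Part $(ii)$ as you present it is complete and correct: $W_Q$-invariance of $q_Q$ plus $q_Q(e_i)=1$ for real roots, and $2q_Q(\beta)=\sum_i\beta_i(\beta,e_i)_Q\le 0$ on the fundamental domain for imaginary ones.

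Two soft spots remain if this were to stand as an actual proof rather than a roadmap. For $(iii)$, BGP reflection functors are only defined at sinks and sources, and "reorienting $Q$ compatibly with the chosen word" changes the category $\rep(Q)$; the fact that the count of indecomposables with a given dimension vector is independent of orientation is itself part of Kac's theorem, so this step is circular unless you either restrict to quivers where every vertex is a sink or source (true for $\Gamma_r$ and its universal cover, which is all the paper needs) or import the orientation-independence from the counting argument. For $(iv)$, non-vanishing of $\Ext^1_Q(M,M)$ does not by itself produce a positive-dimensional family of pairwise non-isomorphic \emph{indecomposables}: deformations of $M$ inside $R(Q,\delta)$ may be decomposable, and the complement of the orbit closure need not meet the indecomposable locus. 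The correct statement is that the number of parameters of the indecomposable locus equals $1-q_Q(\delta)$, which again rests on the counting machinery rather than on the single inequality $\dim\Ext^1_Q(M,M)\ge 1-q_Q(\delta)$. Since the paper deliberately cites rather than reproves the theorem, deferring these points to \cite{Kac3} and \cite{CB1}, as you do at the end, is the right call; just be aware that the sketched shortcuts for $(iii)$ and $(iv)$ would not survive as self-contained arguments.
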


\subsection{Kronecker quivers}

In this section, we summarize basic facts concerning represenstations of Kronecker quivers. The reader is referred to \cite[VII]{Assem1}, \cite[XI.4]{Assem2} and \cite[XVIII]{Assem3} for more details and unexplained terminology. Let $r \geq 1$. We denote by $\Gamma_r$ the generalized Kronecker quiver with two vertices $1,2$ and $r$ arrows:

\[
\xymatrix{
\Gamma_r = & 1 \ar^{\gamma_1}_{\vdots}@/^1pc/[rr]  \ar_{\gamma_r}@/_1pc/[rr] & & 2.
}\]
The Tits form is $q_{\Gamma_r}(x_1,x_2) = x_1^2 + x_2^2 -rx_1x_2.$
Since $\Gamma_r$ is a \textsf{hyperbolic} in the sense of \cite[§1.2]{Kac2}, we also have $\Delta^{\re}_+(\Gamma_r) = \{ (a,b) \in \NN^2_0 \mid q_{\Gamma_r}(a,b) = 1 \}$ and  $\Delta^{\im}_+(\Gamma_r) = \{ (a,b) \in \NN^2 \mid q_{\Gamma_r}(a,b) \leq 0 \}$. 
Hence direct computation shows
\begin{equation}
\boxed{
\Delta^{\im}_+(\Gamma_r)  =  \begin{cases}  \emptyset, r = 1\\
 \{ (a,a) \mid a \in \NN \}, r = 2 \\ 
  \{(a,b) \in \NN^2 \mid (\frac{r-\sqrt{r^2-4}}{2}) < \frac{b}{a} <  (\frac{r+\sqrt{r^2-4}}{2})\}, r \geq 3.
  \end{cases}} \tag{$\blacktriangledown$}
\end{equation} 

For $r \geq 2$ there are infinitely many isomorphism classes of indecomposable representations of $\Gamma_r$. We denote by $\tau_{\Gamma_r}$ the Auslander-Reiten translation of $\Gamma_r$. The indecomposable representations fall into three classes: an indecomposable representation $M$ is called \textsf{preprojective} $($\textsf{preinjective}$)$ if and only if $M$ is in the $\tau_{\Gamma_r}$-orbit of a projective $($injective$)$ indecomposable representation.  All other indecomposable representations are called \textsf{regular}. We call a representation $M \in \rep(\Gamma_r)$ \textsf{preprojective} $($\textsf{preinjective}, \textsf{regular}$)$ if all indecomposable direct summands of $M$ are preprojective $($resp. preinjective, regular$)$.
There are up to isomorphism two indecomposable projective representations $P_1,P_2$, two indecomposable injective representations $I_1,I_2$ and two simple representations $I_1,P_1$.
  We define recursively $P_{i+2} := \tau^{-1}_{\Gamma_r} P_i$ and $I_{i+2} := \tau_{\Gamma_r} I_i$ for all $i \in \NN$.
 The Auslander-Reiten quiver of $\Gamma_r$  quiver looks as follows:
\[ 
\xymatrix @C=10pt@R=15pt{
& P_2 \ar^{r}[dr] \ar@{..}[rrrr]&  & P_4 \ar^{r}[dr] &   & &   & &  &  &  & \ar@{..}[rrrrr] & I_5 \ar^{r}[dr]  & & I_3 \ar^{r}[dr]& &  I_1\\
P_1   \ar@{..}[rrrrr] \ar^{r}[ur]&  & P_3 \ar^{r}[ur] & & P_5 \ar^{r}[ur]  & &  & & & & & \ar^{r}[ur] \ar@{..}[rrrr]  & & I_4 \ar^{r}[ur] & & I_2 \ar^{r}[ur] \\
& & & & & & & & & & & & & & & &
\save "1,7"."2,11"*[F]\frm{} \restore  
\save "3,2"."3,7" *\txt{preprojective} \restore
\save "3,9"."3,10" *\txt{regular} \restore
\save "3,16"."3,17" *\txt{preinjective} \restore}
\] 
The arrows in the preprojective and preinjective component all have multiplicity $r$. We also have for $M \in \rep(\Gamma_r)$ indecomposable the equivalence $($see \cite[2]{BoChen1}$)$
 \[ \boxed{M \ \text{regular} \Leftrightarrow q_{\Gamma_r}(\dim_k M_1,\dim_k M_2) \leq 0.} \tag{$\blacklozenge$}\]
For $r \geq 3$ every regular component $\cC$ of the Auslander-Reiten quiver is of type $\ZZ \mathbb{A}_\infty$ $($see \cite[XVIII.1.6]{Assem3}$)$. A representation $M$ in such a component is called \textsf{quasi-simple}, if the AR-sequence terminating in M has an indecomposable middle term. Let $M$ in $\cC$ be quasi-simple. There is an infinite chain $($a ray$)$ of irreducible monomorphisms $($see \cite[XVIII.1.4-1.6]{Assem3}$)$
\[ M = M[1] \to M[2] \to M[3] \to \cdots \to M[l] \to \cdots\]
\noindent in $\cC$ that is uniquely determined up to isomorphism. A representation $N$ in $\cC$ is called \textsf{successor} of $M$, provided there are $l \in \NN$ and $i \in \NN_0$ such that $N = \tau^{-i}_{\Gamma_r} M[l]$, i.e. there is an oriented path in $\cC$ from $M$ to $N$. If there is an oriented path from $N$ to $M$ in $\cC$, then $N$ is called \textsf{predecessor} of $M$.
For $X$ in $\cC$ there is a unique quasi-simple representation $N$ and $l \in \NN$ with $X = N[l]$. The number $l$ is called the \textsf{quasi-length} of $X$.

\vspace{3pt}
\noindent Direct computation shows that the Coxeter matrix $\Phi_r$ and its inverse are given by $\Phi_r = \begin{pmatrix}
r^2-1 & -r \\
r & -1 
\end{pmatrix}$ and $\Phi^{-1}_r = \begin{pmatrix}
-1 & r \\
- r & r^2 - 1
\end{pmatrix}$, respectively.

Let $M \in \rep(\Gamma_r)$ be a representation. For $\alpha \in k^r \setminus \{0\}$ we define $M^\alpha := \sum^r_{i=1} \alpha_i M(\gamma_i) \colon M_1 \to M_2$. We say that $M$ has the \textsf{equal kernels property} if  $M^\alpha$ is injective for all $\alpha \in k^r \setminus \{0\}$. The representation has the \textsf{equal images property} if $M^\alpha$ is surjective for all $\alpha \in k^r \setminus \{0\}$. We denote by $\EKP(r)$ and $\EIP(r)$ the full subcategories of $\rep(\Gamma_r)$ consisting of representations with the equal kernels property and the equal images property, respectively. The following result provides a functorial characterization of the aforementioned categories.

\begin{Theorem}\cite[2.2.1]	{Wor1}\label{Theorem:Worch}
Let $r \geq 2$. There exists a family of regular indecomposable representations $(X_\alpha)_{\alpha \in k^r \setminus \{0\}}$, such that the following statements hold:
\begin{enumerate}
\item[$(1)$] $\EKP(r) = \{ M \in \rep(\Gamma_r) \mid \forall \alpha \in k^r \setminus \{0\} \colon \Hom_{\Gamma_r}(X_\alpha,M) = 0 \}$.
\item[$(2)$] $\EIP(r) = \{ M \in \rep(\Gamma_r) \mid \forall \alpha \in k^r \setminus \{0\} \colon \Ext^1_{\Gamma_r}(X_\alpha,M) = 0 \}$.
\item[$(3)$] $\{ X_\alpha \mid \alpha \in k^r \setminus \{0\}\} = \{ X \in \rep(\Gamma_r) \mid X \ \text{indecomposable}, \ \dimu X = (1,r-1)\}$.

\end{enumerate}
\end{Theorem}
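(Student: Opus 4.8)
The plan is to realize the family $(X_\alpha)$ concretely as \emph{all} indecomposable representations of dimension vector $(1,r-1)$, and to extract the three statements from a single short projective resolution.

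First I would construct $X_\alpha$. Let $P_1 = (k, k^r, \{e_i\})$ be the indecomposable projective at vertex $1$, where the $i$-th structure map sends $1 \mapsto e_i$ (so $\dimu P_1 = (1,r)$), and let $S = (0,k,0)$ be the simple representation at the sink $2$, which is projective with $\dimu S = (0,1)$. For $\alpha \in k^r \setminus \{0\}$ the morphism $\iota_\alpha \colon S \to P_1$ determined by $1 \mapsto \alpha \in (P_1)_2 = k^r$ is a monomorphism, and I set $X_\alpha := \coker \iota_\alpha$, so that
\[ 0 \longrightarrow S \stackrel{\iota_\alpha}{\longrightarrow} P_1 \longrightarrow X_\alpha \longrightarrow 0 \tag{$\ast$}\]
is a projective resolution of $X_\alpha$. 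Concretely $X_\alpha = (k, k^r/\langle\alpha\rangle, \{\bar e_i\})$ and $\dimu X_\alpha = (1,r-1)$. I would verify indecomposability directly: the one dimensional vertex-$1$ space lies in a single summand of any decomposition, and the structure maps jointly span $k^r/\langle\alpha\rangle$, so no nonzero summand supported only at vertex $2$ can split off; hence $X_\alpha$ is indecomposable. Since $q_{\Gamma_r}(1,r-1) = 1 + (r-1)^2 - r(r-1) = 2 - r \leq 0$ for $r \geq 2$, the equivalence $(\blacklozenge)$ shows $X_\alpha$ is regular.

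Next I would prove $(3)$. If $M = (k, k^{r-1}, \{v_i\})$ is any indecomposable representation with $\dimu M = (1,r-1)$, then $v_1,\dots,v_r$ must span $k^{r-1}$: otherwise a complement of their span would split off as a nonzero direct summand that is a sum of copies of $S$, contradicting indecomposability. As $r$ vectors spanning an $(r-1)$-dimensional space admit a one dimensional space of relations, there is $\beta \in k^r \setminus\{0\}$, unique up to scalar, with $\sum_i \beta_i v_i = 0$, and the assignment $e_i \mapsto v_i$ induces an isomorphism $k^r/\langle\beta\rangle \xrightarrow{\sim} k^{r-1}$ underlying an isomorphism $X_\beta \cong M$. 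Conversely each $X_\alpha$ is of this form, yielding the set equality in $(3)$.

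Finally I would obtain $(1)$ and $(2)$ at once by applying $\Hom_{\Gamma_r}(-,M)$ to $(\ast)$. Using the evaluation isomorphisms $\Hom_{\Gamma_r}(P_1,M) \cong M_1$, $\Hom_{\Gamma_r}(S,M) \cong M_2$ and the projectivity of $P_1$, the long exact sequence collapses to
\[ 0 \to \Hom_{\Gamma_r}(X_\alpha,M) \to M_1 \stackrel{\partial}{\to} M_2 \to \Ext^1_{\Gamma_r}(X_\alpha,M) \to 0 . \]
The step demanding the most care is identifying $\partial$ with $M^\alpha$: precomposition with $\iota_\alpha$ sends the homomorphism $P_1 \to M$ corresponding to $m \in M_1$ to $\sum_i \alpha_i M(\gamma_i)(m) = M^\alpha(m) \in M_2$. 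Granting this, I read off $\Hom_{\Gamma_r}(X_\alpha,M) = \ker M^\alpha$ and $\Ext^1_{\Gamma_r}(X_\alpha,M) = \coker M^\alpha$. Hence $\Hom_{\Gamma_r}(X_\alpha,M) = 0$ for all $\alpha$ iff $M^\alpha$ is injective for all $\alpha \neq 0$, i.e. $M \in \EKP(r)$, giving $(1)$; and $\Ext^1_{\Gamma_r}(X_\alpha,M) = 0$ for all $\alpha$ iff $M^\alpha$ is surjective for all $\alpha \neq 0$, i.e. $M \in \EIP(r)$, giving $(2)$. The main obstacles are thus the bookkeeping that identifies $\partial$ with $M^\alpha$ and the spanning argument underpinning $(3)$ and the indecomposability of $X_\alpha$; the regularity claim is immediate from $(\blacklozenge)$.
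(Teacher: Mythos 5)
Your proof is correct, and since the paper itself imports this statement from \cite[2.2.1]{Wor1} without reproving it, there is nothing to diverge from: realizing $X_\alpha$ as $\coker(\iota_\alpha \colon S \to P)$ with $\dimu S = (0,1)$, $\dimu P = (1,r)$, and reading off the four-term exact sequence $0 \to \Hom_{\Gamma_r}(X_\alpha,M) \to M_1 \xrightarrow{M^\alpha} M_2 \to \Ext^1_{\Gamma_r}(X_\alpha,M) \to 0$ (so that $\Hom_{\Gamma_r}(X_\alpha,M) \cong \ker M^\alpha$ and $\Ext^1_{\Gamma_r}(X_\alpha,M) \cong \coker M^\alpha$), together with the rank--nullity argument classifying all indecomposables of dimension vector $(1,r-1)$ as the $X_\beta$, is precisely the argument underlying the cited source, and your verifications of indecomposability and of regularity via $q_{\Gamma_r}(1,r-1) = 2-r \leq 0$ and $(\blacklozenge)$ are sound. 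One caveat for consistency with this paper's conventions: what you call $P_1$ (dimension vector $(1,r)$) is the paper's $P_2$, and your $S$ is the paper's simple projective $P_1$, so the labels should be adjusted before splicing.
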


\subsection{The universal covering} \label{Subsection:UniversalCovering}

In this section we assume that $r \geq 2$. We consider the universal cover $C_r$ of the quiver $\Gamma_r$. The quiver $C_r$ is an $($infinite$)$ $r$-regular tree with bipartite orientation. We let $C^+_r$ be the set of all sources of $C_r$, $C_r^-$ be the set of all sinks and denote by $\rep(C_r)$ the category of finite dimensional representations of $C_r$.  We only recall those properties that are relevant for our purposes. For a more detailed description we refer to \cite{Gab3},\cite{Ri7} and \cite{Bi2}.\\ We fix a covering morphism $\pi \colon C_r \to\Gamma_r$ of quivers, i.e. $\pi$ is a morphism of quivers and for each $x \in (C_r)_0$ the induced map $n_{C_r}(x) \to n_{\Gamma_r}(\pi(x))$ is bijective.

\noindent By \cite[3.2]{Gab2} there exists an exact functor $\pi_\lambda \colon \rep(C_r) \to \rep(\Gamma_r)$ such that $\pi_{\lambda}(M)_{1} = \bigoplus_{x \in C^+_r} M_x$, $\pi_{\lambda}(M)_{2} = \bigoplus_{y \in C_r^-} M_y$ and $\pi_\lambda(M)(\gamma_i) = \bigoplus_{\gamma \in \pi^{-1}(\gamma_i)} M(\gamma)$ for all $i \in \{1,\ldots,r\}$. Morphisms are defined in the obvious way. 

\begin{Theorem}\cite[3.6]{Gab3}, \cite[6.2, 6.3]{Ri7}\label{TheoremRingelGabriel}
The following statements hold:
\begin{enumerate}[topsep=0em, itemsep= -0em, parsep = 0 em, label=$(\alph*)$]
\item $\pi_{\lambda}$ sends indecomposable representations in $\rep(C_r)$ to indecomposable representations in $\rep(\Gamma_r)$.
\item The free group $G(r)$ of rank $r-1$ acts on $\rep(\Gamma_r)$ via auto-equivalences such that if $M \in \rep(C_r)$ is indecomposable, then $\pi_{\lambda}(M) \cong \pi_{\lambda}(N)$ if and only if $M^g \cong N$ for some $g \in G(r)$.
\item The category $\rep(C_r)$ has almost split sequences, $\pi_{\lambda}$ sends almost split sequences to almost split sequences and $\pi_{\lambda}$ commutes with the Auslander-Reiten translates, i.e. 
\[ \tau_{\Gamma_r} \circ \pi_{\lambda} = \pi_{\lambda} \circ \tau_{C_r}.\]
\end{enumerate}
\end{Theorem}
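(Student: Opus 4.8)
The plan is to treat Theorem~\ref{TheoremRingelGabriel} as a statement about the Galois covering $\pi\colon C_r\to\Gamma_r$ and its push-down functor $\pi_\lambda$, deriving everything from one structural formula. Let $G=G(r)$ be the group of deck transformations of $\pi$; since $\Gamma_r$ has two vertices and $r$ arrows, $G$ is the fundamental group of $\Gamma_r$, a free group of rank $r-1$, and it acts freely on the vertex set of the tree $C_r$, hence on $\rep(C_r)$ by $M\mapsto M^g$. Alongside $\pi_\lambda$ I would use the \emph{pull-up} functor $\pi_\bullet\colon\rep(\Gamma_r)\to\rep(C_r)$ defined on objects by $(\pi_\bullet N)_x=N_{\pi(x)}$ and on arrows by $(\pi_\bullet N)(\gamma)=N(\pi(\gamma))$. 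Two elementary observations drive the argument: first, $\pi_\bullet$ is \emph{faithful}, because $\pi$ is surjective on vertices, so $\pi_\bullet f=0$ forces $f=0$; second, there is a natural isomorphism
\[ \pi_\bullet\pi_\lambda M\;\cong\;\bigoplus_{g\in G}M^g \]
(the orbit formula), which follows from the defining direct-sum description of $\pi_\lambda$, the fact that $G$ acts freely and transitively on $C_r^+=\pi^{-1}(1)$ and on $C_r^-=\pi^{-1}(2)$, and $\pi(gx)=\pi(x)$. Combined with the adjunction $\Hom_{\Gamma_r}(\pi_\lambda M,N)\cong\Hom_{C_r}(M,\pi_\bullet N)$ and the finiteness of $\supp M$, this yields the fundamental Hom-formula $\Hom_{\Gamma_r}(\pi_\lambda M,\pi_\lambda N)\cong\bigoplus_{g\in G}\Hom_{C_r}(M,N^g)$.

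For part~$(a)$ I would show that $\pi_\lambda M$ has no nontrivial idempotent endomorphisms. Note first that for $h\neq e$ one has $M^h\not\cong M$: the support of $M^h$ is $h\cdot\supp M$, and since $G$ is torsion-free and acts freely, no nonidentity element can stabilise the finite set $\supp M$. Hence the summands $M^g$ in the orbit formula (each indecomposable, as twisting is an auto-equivalence) are pairwise nonisomorphic, and $G$ permutes them freely and transitively. Now let $e=e^2\in\End_{\Gamma_r}(\pi_\lambda M)$. Then $\pi_\bullet e$ is a $G$-equivariant idempotent of $\bigoplus_{g}M^g$, that is, a $G$-stable direct summand; under a free transitive $G$-action on pairwise nonisomorphic indecomposable summands the only such summands are $0$ and the whole object, so $\pi_\bullet e\in\{0,1\}$. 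By faithfulness of $\pi_\bullet$ this forces $e\in\{0,1\}$. Since $\pi_\lambda M$ is finite dimensional (its total dimension is $\sum_{x}\dim_k M_x<\infty$), its endomorphism algebra is a finite-dimensional local ring, and $\pi_\lambda M$ is indecomposable.

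Part~$(b)$ is then a Krull--Schmidt computation. The defining formula for $\pi_\lambda$ shows at once that $\pi_\lambda(M^g)\cong\pi_\lambda(M)$ (the deck transformation $g$ merely reindexes the sources and sinks over which the sums are taken), which gives the implication $M^g\cong N\Rightarrow\pi_\lambda M\cong\pi_\lambda N$. Conversely, if $\pi_\lambda M\cong\pi_\lambda N$, I apply $\pi_\bullet$ and the orbit formula to obtain $\bigoplus_{g}M^g\cong\bigoplus_{g}N^g$. Both sides are (possibly infinite) direct sums of indecomposables with local endomorphism rings, so the Krull--Remak--Schmidt--Azumaya theorem applies: the summand $M=M^e$ of the left-hand side is isomorphic to some summand $N^{g}$ of the right-hand side, whence $M^{g^{-1}}\cong N$. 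This identifies the fibres of $\pi_\lambda$ on isomorphism classes of indecomposables with the $G$-orbits.

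For part~$(c)$ I would first invoke the existence of almost split sequences and the translate $\tau_{C_r}$: $C_r$ is a locally bounded hereditary category, so its category of finite-dimensional representations is a Krull--Schmidt hereditary length category admitting almost split sequences. Given the almost split sequence $0\to\tau_{C_r}M\to E\to M\to 0$ ending at an indecomposable $M$, applying the exact functor $\pi_\lambda$ produces a short exact sequence $0\to\pi_\lambda\tau_{C_r}M\to\pi_\lambda E\to\pi_\lambda M\to 0$ with indecomposable ends by part~$(a)$. The crux, and what I expect to be the main obstacle, is to show this pushed-down sequence is again almost split; the clean way is to verify that $\pi_\lambda$ is a \emph{covering functor}, i.e. that the Hom-formula restricts to isomorphisms $\rad_{\Gamma_r}(\pi_\lambda M,\pi_\lambda N)\cong\bigoplus_{g}\rad_{C_r}(M,N^g)$ on radicals and likewise on $\rad^2$, so that $\pi_\lambda$ carries irreducible morphisms to irreducible morphisms and transports mesh relations. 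Granting this, the image of the mesh at $M$ is exactly the mesh at $\pi_\lambda M$, which simultaneously proves that $\pi_\lambda$ sends almost split sequences to almost split sequences and, by reading off the left-hand terms, yields $\tau_{\Gamma_r}\circ\pi_\lambda=\pi_\lambda\circ\tau_{C_r}$. The verification of the radical statement is the technical heart; it is the standard content of covering theory, and for the detailed bookkeeping I would cite \cite{Gab3} and \cite{Ri7} rather than reprove it from scratch.
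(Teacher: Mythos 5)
The paper does not prove this theorem at all: it is quoted verbatim from the literature, with the proof delegated to \cite{Gab3} and \cite{Ri7}. So the only meaningful comparison is against the standard covering-theory arguments in those sources, and your reconstruction is essentially correct, with one genuine difference of route in parts $(a)$ and $(b)$. Gabriel's proof stays entirely inside finite-dimensional representations: from the Hom-formula one gets that $\End_{\Gamma_r}(\pi_\lambda M)\cong\bigoplus_{g\in G}\Hom_{C_r}(M,M^g)$ is a $G$-graded finite-dimensional algebra (finite-dimensionality because $\Hom_{C_r}(M,M^g)\neq 0$ for only finitely many $g$, by finiteness of supports), and one shows directly that it is local: any composite $M\to M^h\to M$ with $h\neq e$ that were invertible would split $M$ off the indecomposable $M^h\not\cong M$, so all components away from $e$, together with $\rad\End_{C_r}(M)$, form an ideal of codimension one consisting of non-units. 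Your variant instead pulls idempotents back along $\pi_\bullet$ and kills them via the orbit formula; this is also a recognized proof, but you should flag that it leaves $\rep(C_r)$: the pull-up $\pi_\bullet N$ of a finite-dimensional representation is infinite-dimensional (the tree $C_r$ has infinitely many vertices), so the orbit formula, the adjunction and all Krull--Remak--Schmidt--Azumaya arguments take place in the larger category of locally finite-dimensional representations, and the step ``every $G$-stable direct summand of $\bigoplus_g M^g$ is $0$ or everything'' needs more than plain Azumaya uniqueness -- summands of infinite direct sums of indecomposables are controlled by the Crawley--J\'onsson--Warfield theorem, which does apply here since each $M^g$ is finite-dimensional, hence countably generated with local endomorphism ring, but this is heavier machinery than Gabriel's finite-dimensional computation. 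Your key special input -- that $M^h\cong M$ forces $h=e$ because a nontrivial element of a free group is torsion-free and acts freely on the tree, so no such element can stabilize the finite set $\supp(M)$ -- is exactly right and is the point where freeness of $G(r)$ enters; and your treatment of $(c)$, reducing everything to the radical-level covering-functor property $\bigoplus_g\rad_{C_r}(M,N^g)\cong\rad_{\Gamma_r}(\pi_\lambda M,\pi_\lambda N)$ and citing \cite{Gab3}, \cite{Ri7} for the bookkeeping, matches the level of detail at which the paper itself treats this statement.
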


\noindent The next result tells us that it is not hard to decide whether the push-down $\pi_\lambda(M)$ of a representation $M \in \rep(C_r)$ has the equal kernels property.

\begin{Theorem}\label{Theorem:INJEKP}\cite[4.1]{Bi2}
\label{PropositionGeneral}
Let $M \in \rep(C_r)$ be an indecomposable representation. The following statements are equivalent:
\begin{enumerate}[topsep=0em, itemsep= -0em, parsep = 0 em, label=$(\alph*)$]
\item $N := \pi_{\lambda}(M) \in \EKP(r)$.
\item $N(\gamma_i)$ is injective for all $i \in \{1,\ldots,r\}$.
\item $M \in \Inj(r) := \{ L \in \rep(C_r) \mid \forall \gamma \in (C_r)_1: L(\gamma) \ \text{is injective}\}$.
\end{enumerate}
\end{Theorem}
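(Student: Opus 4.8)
The plan is to establish the cycle of implications $(a)\Rightarrow(b)\Rightarrow(c)\Rightarrow(a)$, in which the only substantial step is $(c)\Rightarrow(a)$. The implication $(a)\Rightarrow(b)$ is immediate: specializing the parameter to a standard basis vector $\alpha=e_i$ gives $N^{e_i}=N(\gamma_i)$, so if $N\in\EKP(r)$ then each $N(\gamma_i)$ is injective.

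For $(b)\Leftrightarrow(c)$ I would argue directly from the description of the push-down. By definition $N(\gamma_i)=\bigoplus_{\gamma\in\pi^{-1}(\gamma_i)}M(\gamma)$, viewed as a map $\bigoplus_{x\in C_r^+}M_x\to\bigoplus_{y\in C_r^-}M_y$. Because $\pi$ is a covering, the local neighbourhoods are bijective; hence for fixed $i$ the arrows of $\pi^{-1}(\gamma_i)$ have pairwise distinct sources and pairwise distinct targets, so $N(\gamma_i)$ is a genuine direct sum of the block maps $M(\gamma)$ and is therefore injective if and only if each $M(\gamma)$ with $\pi(\gamma)=\gamma_i$ is injective. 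Letting $i$ range over $\{1,\dots,r\}$, the sets $\pi^{-1}(\gamma_i)$ partition $(C_r)_1$, which yields the equivalence with injectivity of all edge maps of $M$.

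The heart of the matter is $(c)\Rightarrow(a)$. Assume every $M(\gamma)$ is injective, fix $\alpha\neq0$, and suppose $v=(v_x)_{x\in C_r^+}$ lies in $\ker N^\alpha$ with $v\neq0$. Evaluating $N^\alpha(v)=0$ in the summand indexed by a sink $y$ gives $\sum_{x\to y}\alpha_{i(x,y)}M(\gamma_{x\to y})(v_x)=0$, the sum running over sources $x$ adjacent to $y$, with $i(x,y)$ the label of the arrow $x\to y$. The set $T:=\{x\in C_r^+\mid v_x\neq0\}$ is finite and nonempty, since $N_1$ is finite dimensional. Let $T'\subseteq C_r$ be the convex hull of $T$, i.e.\ the smallest subtree of $C_r$ containing $T$; every leaf of $T'$ must lie in $T$ (a leaf outside $T$ could be deleted, contradicting minimality), so any leaf $z$ of $T'$ is a source with $v_z\neq0$. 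Each of the neighbours $y_j$ of $z$ lying outside $T'$ is, by acyclicity of the tree, adjacent to no source of $T$ other than $z$, so the equation at $y_j$ collapses to $\alpha_{i(z,y_j)}M(\gamma_{z\to y_j})(v_z)=0$; injectivity of the edge map together with $v_z\neq0$ forces $\alpha_{i(z,y_j)}=0$. There are at least $r-1$ such neighbours (all $r$ if $T'=\{z\}$), and the labels of the arrows out of $z$ are pairwise distinct, so $\alpha$ is supported on at most one coordinate $i^{*}$. As $\alpha\neq0$ this makes $N^\alpha=\alpha_{i^{*}}N(\gamma_{i^{*}})$ a nonzero scalar multiple of an injective map by $(b)$, contradicting $v\neq0$. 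Hence $\ker N^\alpha=0$ for every $\alpha\neq0$.

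The main obstacle is precisely the tree bookkeeping in this last step: one must be certain that the extremal vertex where injectivity is tested is a source carrying a nonzero component of $v$ (this is why one passes to the convex hull and uses that its leaves lie in $T$), and that each neighbour of such a leaf lying outside the hull sees only that single nonzero source, which is exactly the content of the acyclicity of $C_r$. I would remark that indecomposability of $M$ is not in fact used in any of the three equivalences; only finite dimensionality, which guarantees that $T$ is finite, is needed.
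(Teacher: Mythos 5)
Your proof is correct. Note that the paper does not prove this statement itself --- it is quoted from \cite[4.1]{Bi2} --- so there is no internal argument to compare against; your write-up supplies a valid self-contained proof. The two easy implications are handled exactly as one would expect: $(a)\Rightarrow(b)$ by specializing $\alpha=e_i$, and $(b)\Leftrightarrow(c)$ from the fact that the covering property forces the arrows in $\pi^{-1}(\gamma_i)$ to have pairwise distinct sources and pairwise distinct targets, so that $N(\gamma_i)$ really is a block direct sum of the maps $M(\gamma)$. The key step $(c)\Rightarrow(a)$ is also sound: the passage to the convex hull $T'$ of the support $T$ of a putative kernel vector guarantees that every leaf of $T'$ is a source carrying a nonzero component, acyclicity ensures that each neighbour of such a leaf outside $T'$ sees only that one nonzero source, and the resulting $r-1$ (or $r$) vanishing coordinates of $\alpha$ reduce $N^\alpha$ to a nonzero multiple of a single injective $N(\gamma_{i^*})$, a contradiction. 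One implicit point worth making explicit: when you invoke injectivity of $M(\gamma_{z\to y_j})$ for a neighbour $y_j$ possibly outside $\supp(M)$, hypothesis $(c)$ still applies (an injective map into $0$ forces $M_z=0$, which cannot happen since $v_z\neq 0$), so no case is lost. Your observation that indecomposability of $M$ is not needed is also accurate.
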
 

\begin{Lemma}\cite[5.2.1]{Bi2}\label{Lemma:DimensionShift}
Let $M$ be in $\rep(C_r)$ indecomposable and not injective. 
\begin{enumerate}[topsep=0em, itemsep= -0em, parsep = 0 em, label=$(\alph*)$]  
\item For each $x \in C_r^+$ we have $\dim_k (\tau^{-1}_{C_r} M)_x = (\sum_{y \in x^+_{C_r}} \dim_k M_y) - \dim_k M_x.$
\item For each $y \in C_r^-$ we have $\dim_k (\tau^{-1}_{C_r} M)_y = (\sum_{x \in y^-_{C_r}} \dim_k (\tau^{-1}_{C_r} M)_x) - \dim_k M_y.$
\end{enumerate}
\end{Lemma}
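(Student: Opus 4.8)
The plan is to compute the dimension vector of $\tau^{-1}_{C_r} M$ through the inverse Coxeter functor, exploiting that $C_r$ is a hereditary, bipartite quiver with sources $C_r^+$ and sinks $C_r^-$. Over such a quiver the inverse Auslander--Reiten translate of an indecomposable without injective summands is computed by the inverse Coxeter functor $C^-$, which for the bipartite orientation factors into two batches of Bernstein--Gelfand--Ponomarev reflection functors: first the source reflections $\sigma^-_x$ at all $x \in C_r^+$ (these vertices are pairwise non-adjacent, so the functors commute and may be applied simultaneously), and afterwards the source reflections at all $y \in C_r^-$, which have become sources in the once-reflected quiver. Since $M$ is finite dimensional its support is finite, so although $C_r$ is infinite, each reflection only alters $M$ on the finite full subquiver spanned by $\supp(M)$ together with its neighbours; the classical reflection-functor calculus therefore applies verbatim.

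First I would record the effect of a single source reflection on dimension vectors. By the reflection formula recalled above, $r_v(\delta)_v = -\delta_v + \sum_{l} \delta_l$, the sum running over the neighbours $l$ of $v$, while $r_v$ fixes every other coordinate. The key compatibility is that $\tau^{-1}_{C_r} M \cong C^- M$ on the nose, which holds because $M$ is indecomposable and not injective, hence has no injective direct summand. To pass from the functorial statement to the numerical one I must check that at each reflection step the representation being reflected has no direct summand isomorphic to the simple at the vertex in question, for only then does the reflection functor act on dimensions by the plain reflection $r_v$. Here the hypothesis pays off: the simple representation at a source of a bipartite quiver is injective, so an indecomposable non-injective representation is never such a simple; this disposes of the first batch applied to $M$. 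For the second batch, the once-reflected indecomposable $M' := \sigma^-_{C_r^+} M$ cannot be a simple at a source of the reflected quiver, since such a simple would be annihilated by the second batch, contradicting $C^- M = \tau^{-1}_{C_r} M \neq 0$. This rules out the exceptional summands and guarantees that every step contributes an honest reflection with no correction term.

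With these preparations the computation unwinds in two steps. Applying the source reflections at the vertices of $C_r^+$ changes, for each $x \in C_r^+$, only the $x$-coordinate, and the reflection formula gives $\dim_k(\tau^{-1}_{C_r} M)_x = -\dim_k M_x + \sum_{y \in x^+_{C_r}} \dim_k M_y$, since the out-neighbours of the source $x$ are exactly its neighbours; this is part $(a)$, and these coordinates are untouched by the second batch. Applying next the source reflections at the vertices of $C_r^-$ — each $y \in C_r^-$ is a source of the reflected quiver whose neighbours are the vertices $x \in y^-_{C_r}$, now carrying the already-updated dimensions $\dim_k(\tau^{-1}_{C_r} M)_x$ — yields $\dim_k(\tau^{-1}_{C_r} M)_y = -\dim_k M_y + \sum_{x \in y^-_{C_r}} \dim_k(\tau^{-1}_{C_r} M)_x$, which is part $(b)$.

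The main obstacle will be the bookkeeping forced by the infiniteness and non-finite-dimensionality of $kC_r$: one must ensure that the reflection functors and the identification $\tau^{-1}_{C_r} \cong C^-$, which are classically formulated for finite quivers, remain valid for the locally finite tree $C_r$. I expect to resolve this through the finite-support localisation indicated above — every finite-dimensional indecomposable lives on a finite subquiver — together with Theorem \ref{TheoremRingelGabriel}, which transports the Auslander--Reiten formalism along the covering $\pi_\lambda$ and so certifies that $\tau^{-1}_{C_r}$ behaves as in the finite hereditary case.
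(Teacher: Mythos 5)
The paper does not prove this lemma at all: it is imported verbatim from \cite[5.2.1]{Bi2}, so there is no internal proof to compare against. Your argument --- writing $\tau^{-1}_{C_r}$ as the inverse Coxeter functor, which for the bipartite orientation is the batch of BGP source reflections at $C_r^+$ followed by the batch at $C_r^-$, checking that no stage produces (or destroys) a simple summand at the reflected vertex, and reading off the two displayed formulas from the first and second batch respectively --- is the standard proof of exactly this statement and is correct; in particular your observations that the source simples are injective and that annihilation in the second batch would contradict $C^-M\cong\tau^{-1}_{C_r}M\neq 0$ do rule out the degenerate cases. Two small points to tighten: $(1)$ inside the first batch you should also note that an intermediate reflected representation cannot equal the simple at a not-yet-reflected source (if it did, all sink coordinates of $M$ would vanish, forcing the indecomposable $M$ to be a source simple and hence injective); $(2)$ the finite-support localisation is better justified by observing that the projectives and injectives of $C_r$ at the relevant vertices are finite dimensional and coincide with those of any sufficiently large finite convex subtree, so minimal (co)presentations and hence $\tau^{\pm 1}$ agree there --- Theorem \ref{TheoremRingelGabriel} concerns the push-down $\pi_\lambda$ to $\Gamma_r$ and does not by itself certify this.
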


\noindent In the rest of this section we make the preparations needed for the proof of Theorem $\ref{Theorem:NotMaximalRank1}$ in section $\ref{Section:4}$. Recall that for $M \in \rep(C_r)$ the \textsf{support of $M$}  is defined as $\supp(M) := \{ x \in (C_r)_0 \mid M_x \neq 0 \}$. If $M$ is indecomposable then $\supp(M)$ is a finite tree and $x \in \supp(M)$, then $x$ is called a \textsf{leaf of $M$}, provided $|\supp(M) \cap n_{C_r}(x)| \leq 1$.

\begin{Definition}
Let $M$ in $\rep(C_r)$ be indecomposable. We say that $M$ has a \textsf{thin sink branch} if there exist $x,y \in \supp(M)$ such that 
\begin{enumerate}
\item $x \in n_{C_r}(y)$ and $x$ is a source,
\item $x$ and $y$ are $M$-\textsf{thin}, i.e. $\dim_k M_x = 1 = \dim_k M_y$, and
\item $y$ is a leaf of $M$.
\end{enumerate}
\end{Definition}

\begin{corollary}\label{Corollary:DimensionShift}
Let $M$ in $\rep(C_r)$ be indecomposable with thin sink branch, then $\tau^{-l}_{C_r} M$ has a thin sink branch for every $l \geq 0$.
\end{corollary}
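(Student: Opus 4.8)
The plan is to prove the statement by induction on $l$, the case $l=0$ being exactly the hypothesis. For the inductive step it suffices to establish the following two facts for an arbitrary indecomposable $N \in \rep(C_r)$ carrying a thin sink branch: first, that $N$ is not injective, and second, that $\tau^{-1}_{C_r}N$ again carries a thin sink branch. Since $\rep(C_r)$ has almost split sequences (Theorem \ref{TheoremRingelGabriel}), an indecomposable non-injective $N$ has indecomposable $\tau^{-1}_{C_r}N$, so the class of indecomposable representations with a thin sink branch is closed under $\tau^{-1}_{C_r}$ and the induction runs.

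First I would verify that an indecomposable representation with a thin sink branch is never injective, which is what is needed in order to legitimately apply the dimension-shift formulas of Lemma \ref{Lemma:DimensionShift}. As $C_r$ is a bipartite tree, every path in $C_r$ has length $0$ or $1$, so the indecomposable injective $I_v$ is the simple $S_v$ when $v \in C^+_r$ is a source, and is the star consisting of the central sink $v$ together with its $r$ adjacent sources, all of dimension $1$, when $v \in C^-_r$ is a sink. In the first case $\supp(I_v)$ contains no sink at all, and in the second the unique sink $v$ has $r \geq 2$ support-neighbours and hence is not a leaf; in neither case does a thin sink branch exist. As $N$ is indecomposable, this shows $N$ is not injective.

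Now fix a thin sink branch $(x,y)$ of $N$, so that $x \in C^+_r$, $y \in C^-_r$, there is an arrow $x \to y$, $\dim_k N_x = \dim_k N_y = 1$, and, combining the leaf condition with $x \in \supp(N) \cap n_{C_r}(y)$, one has $\supp(N) \cap n_{C_r}(y) = \{x\}$. Since $r \geq 2$, the sink $y$ has a further source-neighbour $x' \neq x$, and $x'$ has a sink-neighbour $y' \neq y$. Because $\supp(N)$ is a tree in which $y$ is a leaf attached only to $x$, every vertex in the branch of $C_r$ emanating from $y$ through $x'$ lies outside $\supp(N)$; in particular $x'$, $y'$, the remaining sink-neighbours of $x'$, and the source-neighbours of $y'$ other than $x'$ are all outside $\supp(N)$. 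I would then compute, applying Lemma \ref{Lemma:DimensionShift}(a) at the sources first and Lemma \ref{Lemma:DimensionShift}(b) at the sinks afterwards: the only nonzero neighbour term of $x'$ is $y$, so $\dim_k(\tau^{-1}_{C_r}N)_{x'} = \dim_k N_y = 1$; every source $u \neq x'$ adjacent to $y'$ has all its neighbours outside $\supp(N)$, so $\dim_k(\tau^{-1}_{C_r}N)_u = 0$; hence $\dim_k(\tau^{-1}_{C_r}N)_{y'} = \dim_k(\tau^{-1}_{C_r}N)_{x'} = 1$. Thus $x'$ and $y'$ are both thin in $\tau^{-1}_{C_r}N$, the arrow $x' \to y'$ joins them, and $y'$ has exactly one support-neighbour, namely $x'$, so $(x',y')$ is a thin sink branch of $\tau^{-1}_{C_r}N$.

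I expect the main obstacle to be the bookkeeping of the final paragraph: one must confirm that the vertices lying beyond the leaf $y$ genuinely fall outside $\supp(N)$ — this is exactly where the leafness of $y$ and the tree structure of $C_r$ are used — and one must respect the order of the two dimension-shift formulas, since the value of $\dim_k(\tau^{-1}_{C_r}N)_{y'}$ depends on the already-updated dimensions at the sources adjacent to $y'$. Granting this, the thin sink branch simply propagates one step outward from $y$ under each application of $\tau^{-1}_{C_r}$, and the induction is complete.
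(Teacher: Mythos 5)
Your proof is correct and follows essentially the same route as the paper: reduce to a single application of $\tau^{-1}_{C_r}$ and use Lemma \ref{Lemma:DimensionShift} to show the thin sink branch propagates one step outward, from $x \to y$ to the pair the paper calls $a \to b$ (your $x' \to y'$). The only addition is your explicit check that a representation with a thin sink branch is non-injective, a hypothesis of Lemma \ref{Lemma:DimensionShift} that the paper leaves implicit; this is a welcome but minor refinement, not a different argument.
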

\begin{proof} Clearly, it sufficies to prove the statement for $l = 1$.
Let $x \to y$ be a thin sink branch with leaf $y$. Consider a path $x \to y \leftarrow a \to b$ with $x \neq a$ and $y \neq b$. We are in the situation of Figure $\ref{Figure:Leaves}$.
\begin{figure}[h]
\centering 
\tikzstyle{every node}=[]
\tikzstyle{edge from child}=[]

\begin{tikzpicture}[->,>=stealth',auto,node distance=3cm,
  thick,main node/.style={circle,draw,font=\sffamily\Large\bfseries}]

  \draw [fill =gray!10] plot [smooth cycle] coordinates {(1.2,0.4) (1.2,-0.4) (-3.5,-0.8) (-3.5,0.8)};

  \node (1) at (-0.5,0) {$\circ$};
  \node (2) at (1,0) {$\circ$};
  \node (3) at (2,-1) {$\circ$};
  \node (4) at (3,0) {$\circ$};
  
    \node at (-0.5,-0.3) {$x$};
    \node at (1,-0.3) {$y$}; 
    \node at (2,-1.3) {$a$}; 
    \node at (3,-0.3) {$b$}; 
     \node at (-2.5,-0.0) {$\supp(M)$}; 
  \path[every node/.style={font=\sffamily\small}]
    
    (1) edge[] node [left] {} (2)
    (3) edge[] node [left] {} (2)
    (3) edge[] node [left] {} (4)
   ;
\end{tikzpicture}
\caption{Illustration of $\supp(M)$ and the oriented path.}
\label{Figure:Leaves}
\end{figure}
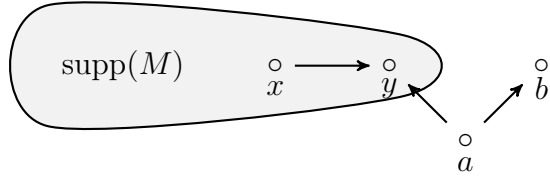
Lemma $\ref{Lemma:DimensionShift}$ implies $\dim_k (\tau^{-1}_{C_r} M)_a = \dim_k M_y = 1$. Let $z \in n_{C_r}(b) - \{a\}$, we get with $\ref{Lemma:DimensionShift}$ that $(\tau^{-1}_{C_r} M)_z = 0$. We conclude 
\[ \dim_k (\tau^{-1}_{C_r} M)_b = (\sum_{u \in n_{C_r}(b)} \dim_k (\tau^{-1}_{C_r} M)_u) - \dim_k M_b = \dim_k (\tau^{-1}_{C_r} M)_a - 0 = 1.\]
This shows that $a \to b$ is a thin sink branch of $\tau^{-1}_{C_r} M$.
\end{proof}

\noindent The proofs of the three results below may be found in \cite[3.10, 3.9 and 3.11]{Bi3}, altough they are stated slighty different. Recall that a representation $M \in \rep(C_r)$ is called \textsf{thin}, provided $\dim_k M_x \leq 1$ for all $x \in (C_r)_0$

\begin{Lemma}\label{Lemma:ExistenceThinInjectiveArrow}
Let $(u,v) \in \NN^2$ such that $u \leq v \leq (r-1)u + 1$. There exists an indecomposable and thin representation $S_{(u,v)} \in \rep(C_r)$ such that $\dimu \pi_\lambda(S_{(u,v)}) = (u,v)$ and  $S_{(u,v)}(\gamma)$ is injective for all $\gamma \in (C_r)_1$ such that $\pi(\gamma) = \gamma_1$.
\end{Lemma}

\begin{proposition}\label{Proposition:Result2}
Let $(u,v) \in \NN^2$ such that $(r-1)u+1 \leq v \leq (r-\frac{1}{r-1})u$. There exists an indecomposable representation $T_{(u,v)} \in \rep(C_r)$ such that $T_{(u,v)} \in \Inj(r)$, $\dimu \pi_\lambda (T_{(u,v)}) = (u,v)$ and each leaf of $T_{(u,v)}$ is thin. 
\end{proposition}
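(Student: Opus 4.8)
The plan is to realise $T_{(u,v)}$ as the inverse Auslander--Reiten translate in $\rep(C_r)$ of a thin representation of small slope, using that $\pi_\lambda$ commutes with $\tau$ (Theorem \ref{TheoremRingelGabriel}$(c)$) and that $\tau^{-1}_{\Gamma_r}$ acts on dimension vectors of regular representations through $\Phi^{-1}_r$. Set $(u',v') := \Phi_r(u,v) = ((r^2-1)u - rv,\, ru - v)$. Working only from the defining inequalities $(r-1)u+1 \leq v \leq (r-\tfrac{1}{r-1})u$, I would first check that $(u',v') \in \NN^2$ and that it lies in the source--sink dual range $v' \leq u' \leq (r-1)v'+1$: the inequality $v' \leq u'$ is equivalent to $v \leq (r-\tfrac{1}{r-1})u$ (with equality exactly on the upper boundary), the inequality $u' \leq (r-1)v'+1$ follows from $v \geq (r-1)u+1$, and positivity follows from the same bounds. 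Note that the $T$-range forces the high-slope regime $v/u > r-1$, which is precisely the gap between the range of Lemma \ref{Lemma:ExistenceThinInjectiveArrow} and its first $\tau^{-1}_{\Gamma_r}$-translate, so a base of slope $< 1$ is forced.

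Next I would produce the base representation. By the source--sink dual of Lemma \ref{Lemma:ExistenceThinInjectiveArrow} there is a thin indecomposable $S' \in \rep(C_r)$ with $\dimu \pi_\lambda(S') = (u',v')$ and with $S'(\gamma)$ surjective for every $\gamma \in (C_r)_1$ satisfying $\pi(\gamma) = \gamma_1$. Since the slope $v'/u'$ lies strictly inside the imaginary cone for $r \geq 3$, the push-down $\pi_\lambda(S')$ is regular by $(\blacklozenge)$; in particular $S'$ is not injective, so $\tau^{-1}_{C_r}(S')$ is defined and again indecomposable. I then set $T_{(u,v)} := \tau^{-1}_{C_r}(S')$. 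Using the commutation $\pi_\lambda \tau^{-1}_{C_r} = \tau^{-1}_{\Gamma_r}\pi_\lambda$ and the Coxeter identity one gets $\dimu \pi_\lambda(T_{(u,v)}) = \Phi^{-1}_r(u',v') = (u,v)$.

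It remains to verify $T_{(u,v)} \in \Inj(r)$ and that each leaf is thin. For the leaves I would use that $S'$ is thin, so every sink leaf of $S'$ sits in a thin sink branch; Corollary \ref{Corollary:DimensionShift} shows that thin sink branches survive $\tau^{-1}_{C_r}$, and together with Lemma \ref{Lemma:DimensionShift}, which pins down the dimensions at the outermost vertices of $\supp(\tau^{-1}_{C_r}S')$, this forces every leaf of $T_{(u,v)}$ to be thin. For the injectivity of all arrows I would use Lemma \ref{Lemma:DimensionShift} to compute $\dim_k (T_{(u,v)})_x$ at every source $x$ and $\dim_k (T_{(u,v)})_y$ at every sink $y$ of the support, verify $\dim_k (T_{(u,v)})_x \leq \dim_k (T_{(u,v)})_y$ across each arrow $x \to y$, and deduce that the structure maps of the translate are injective, i.e. $T_{(u,v)} \in \Inj(r)$; by Theorem \ref{Theorem:INJEKP} this is equivalent to $\pi_\lambda(T_{(u,v)}) \in \EKP$, the property ultimately sought.

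The step I expect to be hardest is the injectivity verification: a favourable dimension inequality at an arrow does not by itself guarantee an injective map, so one must analyse inductively how $\tau^{-1}_{C_r}$ acts across the tree, in effect showing that a single inverse translation carries the surjectivity of the $\gamma_1$-arrows of $S'$ (the $\EIP$-side, slope $<1$) into injectivity of \emph{all} arrows of $T_{(u,v)}$ (the $\EKP$-side, slope $>r-1$). A secondary technical point is the boundary case $u' = (r-1)v'+1$, where $\pi_\lambda(S')$ is a real root rather than regular, and the dimension-shift bookkeeping of Lemma \ref{Lemma:DimensionShift} must be redone separately to confirm that all arrows of $T_{(u,v)}$ stay injective and all leaves stay thin.
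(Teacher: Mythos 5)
A preliminary remark: the paper does not prove this proposition internally but cites \cite{Bi3} for it, so there is no in-paper proof to compare against; I am judging your argument on its own terms. Your reduction is set up correctly: $(u',v') := \Phi_r(u,v)$ does land in the range $v' \leq u' \leq (r-1)v'+1$ dual to that of Lemma \ref{Lemma:ExistenceThinInjectiveArrow} (in fact $u' \leq (r-1)v'-1$, so the boundary case you flag never occurs), $(u',v')$ is always an imaginary root, and $\dimu \pi_\lambda(\tau^{-1}_{C_r}S') = \Phi_r^{-1}(u',v') = (u,v)$. The gap is the central claim that $\tau^{-1}_{C_r}S' \in \Inj(r)$ for \emph{any} $S'$ supplied by the dual of Lemma \ref{Lemma:ExistenceThinInjectiveArrow}. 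This is false. Take $r=3$, $(u,v)=(4,9)$, so $(u',v')=(5,3)$, and let $S'$ be the thin indecomposable representation whose support tree has sinks $w_1,w_2,w_3$ and sources $z_1,\dots,z_5$ with edges $z_1\to w_1$, $z_2\to w_1$, $z_3\to w_1$, $z_3\to w_2$, $z_4\to w_2$, $z_4\to w_3$, $z_5\to w_3$, embedded in $C_3$ so that each sink receives its $\gamma_1$-arrow from a support source. This $S'$ satisfies every conclusion of the dual of Lemma \ref{Lemma:ExistenceThinInjectiveArrow}, yet Lemma \ref{Lemma:DimensionShift} gives $\dim_k(\tau^{-1}_{C_3}S')_{z_3} = 2-1 = 1$ while $\dim_k(\tau^{-1}_{C_3}S')_{w_1} = (0+0+1)-1 = 0$, so the arrow $z_3 \to w_1$ of $\tau^{-1}_{C_3}S'$ maps a nonzero space to zero. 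Thus even the dimension inequalities you intend to verify already fail; the difficulty is not merely that favourable dimensions do not force injectivity of the maps.

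The underlying reason is that membership of $\tau^{-1}_{C_r}S'$ in $\Inj(r)$ is governed by all $r$ arrow colours, not only $\gamma_1$. Combining Theorem \ref{Theorem:INJEKP} with the Auslander--Reiten formula (as in Section 5.3 of the paper, $\Hom_{C_r}(X_i^g,\tau^{-1}_{C_r}S') \cong \Hom_{C_r}(\tau_{C_r}X_i^g,S')$), one checks that for thin $S'$ the translate lies in $\Inj(r)$ exactly when no sink of $\supp(S')$ carries $r-1$ or more sources that are leaves of $S'$; in the example above $w_1$ carries the two leaves $z_1,z_2$. A good $S'$ for $(5,3)$ does exist (the path $w_1 - z_a - w_2 - z_b - w_3$ with one extra leaf source at each sink works), but constructing such a support tree for every $(u',v')$ in the relevant range is precisely the combinatorial content of the proposition, and the dual of Lemma \ref{Lemma:ExistenceThinInjectiveArrow}, which only constrains the $\gamma_1$-arrows, gives you no purchase on it. As written, the argument therefore does not close: the appeal to the dual of Lemma \ref{Lemma:ExistenceThinInjectiveArrow} must be replaced by an explicit construction of the base tree with the stronger leaf condition, which is what the cited source \cite{Bi3} carries out.
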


\begin{proposition}\label{Proposition:Result3}
Let $(u,v) \in \NN^2$ such that $(r - \frac{1}{r-1})u < v < u(\frac{r+\sqrt{r^2-4}}{2})$. Then there exists $l \in \NN$ such that for $(x,y) := \Phi_r^l(u,v)$  one of the following cases holds:
\begin{enumerate}
\item $x \leq y \leq (r-1)x + 1$ and $\ref{Lemma:ExistenceThinInjectiveArrow}$ yields an indecomposable representation $S_{(x,y)}$ such that $\tau^{-l}_{C_r} S_{(x,y)}$ satisfies $\dimu \pi_{\lambda}(\tau^{-l}_{C_r} S_{(x,y)}) = (u,v)$. 
\item $(r-1)x+1 \leq y \leq (r-\frac{1}{r-1})y$ and $\ref{Proposition:Result2}$ yields an indecomposable representation $T_{(x,y)}$ such that $\tau^{-l}_{C_r} T_{(x,y)}$ satisfies $\dimu \pi_{\lambda}(\tau^{-l}_{C_r} S_{(x,y)}) = (u,v)$. 
\end{enumerate} 
\end{proposition}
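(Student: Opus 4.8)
The plan is to track how the \emph{ratio} $y/x$ of a dimension vector evolves under the Coxeter transformation $\Phi_r$, and to show that the $\Phi_r$-orbit of $(u,v)$ is pushed monotonically downward until it first enters the band $1 \le y/x \le r - \frac{1}{r-1}$, on which Lemma \ref{Lemma:ExistenceThinInjectiveArrow} and Proposition \ref{Proposition:Result2} jointly apply. Since $\Phi_r(x,y) = ((r^2-1)x - ry,\, rx - y)$, the induced action on ratios $\rho = y/x$ is the M\"obius map
\[ f(\rho) = \frac{rx - y}{(r^2-1)x - ry} = \frac{r - \rho}{(r^2-1) - r\rho}. \]
Write $\lambda_{\pm} = \frac{r \pm \sqrt{r^2-4}}{2}$ for the two endpoints of the imaginary cone in $(\blacktriangledown)$. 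A direct computation shows that the fixed points of $f$ are exactly $\lambda_-$ and $\lambda_+$, that $f'(\rho) = ((r^2-1) - r\rho)^{-2} > 0$, and that $f(\rho) - \rho = r(\rho - \lambda_-)(\rho-\lambda_+)/((r^2-1) - r\rho)$. Note that the stated open interval is empty when $r=2$ (there $r - \frac1{r-1} = 1 = \lambda_+$), so we may assume $r \ge 3$; then one checks $\lambda_- < 1 < r - \frac{1}{r-1} < \lambda_+$ and that the pole $\rho = (r^2-1)/r$ of $f$ lies to the right of $\lambda_+$.

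First I would establish the dynamics on $(\lambda_-,\lambda_+)$: there $f$ is increasing, has no pole, satisfies $f(\rho) < \rho$, and maps the interval into itself. Consequently, setting $(x_l,y_l) := \Phi_r^l(u,v)$ and $\rho_l := y_l/x_l$, the sequence $(\rho_l)_{l \ge 0}$ is strictly decreasing and bounded below by $\lambda_-$, hence converges to the unique fixed point $\lambda_-$ in $[\lambda_-,\lambda_+)$. That each $(x_l,y_l)$ again lies in $\NN^2$ follows because $(u,v)$ is a positive imaginary root, so it is the dimension vector of a regular indecomposable $M$, and $\Phi_r^l(u,v) = \dimu \tau^{-l}_{\Gamma_r} M$ is the dimension vector of another regular indecomposable.

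The crux is the exact boundary computation $f\!\left(r - \frac{1}{r-1}\right) = 1$: both $r - \rho$ and $(r^2-1) - r\rho$ evaluate to $\frac{1}{r-1}$ at $\rho = r - \frac{1}{r-1}$, so their quotient is $1$. Granting this, I would let $l$ be the least index with $\rho_l \le r - \frac{1}{r-1}$; it exists because $\rho_l \to \lambda_- < r - \frac{1}{r-1}$, and $l \ge 1$ because the hypothesis gives $\rho_0 = v/u > r - \frac{1}{r-1}$. By minimality and the fact that the orbit never leaves $(\lambda_-,\lambda_+)$, we have $\rho_{l-1} \in (r - \frac{1}{r-1}, \lambda_+)$, so the monotonicity of $f$ yields $\rho_l = f(\rho_{l-1}) > f(r - \frac{1}{r-1}) = 1$. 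Thus $1 < \rho_l \le r - \frac{1}{r-1}$, i.e. $x \le y \le (r - \frac{1}{r-1})x$ for $(x,y) = (x_l,y_l)$. Since $(x,y) \in \NN^2$, either $y \le (r-1)x + 1$, placing us in case (a) via Lemma \ref{Lemma:ExistenceThinInjectiveArrow}, or else $y \ge (r-1)x + 1$, placing us in case (b) via Proposition \ref{Proposition:Result2}; these overlap at $y = (r-1)x+1$ and leave no gap (for the few small $x$ where the case-(b) band is empty, case (a) already covers the whole band).

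Finally, the dimension-vector identity is a formal consequence of covering theory. As $\rho_l$ lies strictly inside $(\lambda_-,\lambda_+)$, $(\blacklozenge)$ shows the indecomposable push-down $\pi_\lambda(S_{(x,y)})$ (resp. $\pi_\lambda(T_{(x,y)})$) is regular, and in particular $S_{(x,y)}$ is not injective, so $\tau^{-l}_{C_r} S_{(x,y)}$ is defined and nonzero. Using $\tau_{\Gamma_r} \circ \pi_\lambda = \pi_\lambda \circ \tau_{C_r}$ from Theorem \ref{TheoremRingelGabriel}(c) and that $\Phi_r^{-1}$ computes $\dimu \tau^{-1}_{\Gamma_r}$ on regular representations, I obtain $\dimu \pi_\lambda(\tau^{-l}_{C_r} S_{(x,y)}) = \Phi_r^{-l}\, \dimu \pi_\lambda(S_{(x,y)}) = \Phi_r^{-l}(x,y) = (u,v)$, as required. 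The main obstacle is controlling the single decisive step: once the orbit first drops to or below $r - \frac{1}{r-1}$ it must not overshoot below $1$, and this is precisely guaranteed by the clean identity $f(r - \frac{1}{r-1}) = 1$ together with the monotonicity of $f$.
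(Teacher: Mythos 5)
Your argument is correct, but note that there is nothing in this paper to compare it against: the author explicitly defers the proofs of Lemma \ref{Lemma:ExistenceThinInjectiveArrow}, Proposition \ref{Proposition:Result2} and Proposition \ref{Proposition:Result3} to \cite[3.10, 3.9 and 3.11]{Bi3}. Taken on its own terms, your dynamical argument on the ratio $\rho = y/x$ is sound and self-contained. The key computations all check out: the fixed points of $f(\rho) = (r-\rho)/((r^2-1)-r\rho)$ are the roots of $\rho^2 - r\rho + 1 = 0$, i.e.\ $\lambda_\pm$; the pole $(r^2-1)/r = r - \tfrac{1}{r}$ lies to the right of $\lambda_+ = r - \tfrac{1}{\lambda_+}$ because $\lambda_+ < r$; hence $f$ is increasing on $(\lambda_-,\lambda_+)$ with $f(\rho)<\rho$ there, so the orbit decreases to $\lambda_- < 1$. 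The decisive identity $f\bigl(r - \tfrac{1}{r-1}\bigr) = 1$ (numerator and denominator both equal $\tfrac{1}{r-1}$) indeed prevents the first iterate landing at or below $r - \tfrac{1}{r-1}$ from overshooting below $1$, and the integrality dichotomy $y \le (r-1)x+1$ versus $y \ge (r-1)x+1$ then places $(x,y)$ squarely in the hypotheses of Lemma \ref{Lemma:ExistenceThinInjectiveArrow} or Proposition \ref{Proposition:Result2}; you also correctly read the condition in case (b) as $y \le (r-\tfrac{1}{r-1})x$, silently repairing the typo in the statement. The covering-theory bookkeeping at the end is right as well. One trivial slip: in your positivity argument the identity should read $\Phi_r^l(u,v) = \dimu \tau^{l}_{\Gamma_r} M$ rather than $\tau^{-l}_{\Gamma_r} M$ --- the paper's convention, visible from $\dimu \pi_\lambda(\tau^{-1}_{C_r}Y) = (r,r^2-1) = \Phi_r^{-1}(0,1)$ in the proof of Proposition \ref{Proposition:ExistenceThinSinkBranch}, is $\dimu \tau^{-1} M = \Phi_r^{-1}\dimu M$ --- but since only membership in $\NN^2$ is used at that point, nothing breaks, and your final computation $\dimu\pi_\lambda(\tau^{-l}_{C_r}S_{(x,y)}) = \Phi_r^{-l}(x,y) = (u,v)$ uses the correct convention.
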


\begin{corollary}\label{Corollary:ThinSinkBranch}
Let $(a,b) \in \Delta^{\im}_+(\Gamma_r)$ be an imaginary root such that $a \leq b$. There exists an indecomposable representation $B_{(a,b)} \in \rep(C_r)$ such that $B_{(a,b)}$ has a thin sink branch and $\dimu \pi_\lambda(B_{(a,b)})= (a,b)$
\end{corollary}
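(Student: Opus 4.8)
The plan is to split according to the slope $b/a$ and apply in turn the three construction results preceding the statement. Since $(a,b)$ is imaginary with $a \le b$, formula $(\blacktriangledown)$ gives $a \le b < \tfrac{r+\sqrt{r^2-4}}{2}\,a$, and the three $b$-intervals $[a,(r-1)a+1]$, $[(r-1)a+1,(r-\tfrac{1}{r-1})a]$ and $((r-\tfrac{1}{r-1})a,\tfrac{r+\sqrt{r^2-4}}{2}a)$ cover this range; they are precisely the hypotheses of Lemma \ref{Lemma:ExistenceThinInjectiveArrow}, Proposition \ref{Proposition:Result2} and Proposition \ref{Proposition:Result3}. In each case I will exhibit an indecomposable representation with the correct push-down dimension vector and a sink leaf that is $M$-thin; such a sink leaf $y$ is joined to a single support vertex $x$, which is a source by bipartiteness, and this yields a thin sink branch $x\to y$.

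First suppose $a \le b \le (r-1)a+1$. Then Lemma \ref{Lemma:ExistenceThinInjectiveArrow} furnishes a thin indecomposable $S_{(a,b)}$ with $\dimu\pi_\lambda(S_{(a,b)})=(a,b)$, which I take for $B_{(a,b)}$. Thinness means $\supp(S_{(a,b)})$ is a finite tree with exactly $a$ source vertices and $b$ sink vertices, hence $a+b-1$ edges. As $C_r$ is bipartite, each edge joins a source to a sink, so $\sum_{y\ \text{sink}}\deg(y)=a+b-1$. Were every sink of support-degree at least $2$ we would get $a+b-1\ge 2b$, i.e. $a\ge b+1$, against $a\le b$. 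Hence some sink is a leaf, giving the desired thin sink branch.

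Next suppose $(r-1)a+1\le b\le (r-\tfrac{1}{r-1})a$, and take $B_{(a,b)}:=T_{(a,b)}$ from Proposition \ref{Proposition:Result2}, which lies in $\Inj(r)$, has push-down $(a,b)$ and only thin leaves. Here I must again produce a sink leaf, and this is the main obstacle, because $T_{(a,b)}$ is no longer thin and the naive degree count fails. My plan is to argue by contradiction: if every leaf of $T_{(a,b)}$ were a source, then indecomposability forces, at each sink $y$, the images of the incoming injections $(T_{(a,b)})_x\to (T_{(a,b)})_y$ to span $(T_{(a,b)})_y$ — otherwise a complement of this span splits off as a direct summand supported at $y$. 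This spanning property at every sink, combined with the vertex count for a tree all of whose leaves are sources (which yields strictly more source than sink vertices) and with the injectivity $\dim_k (T_{(a,b)})_x\le \dim_k (T_{(a,b)})_y$ along each arrow, is designed to bound $b\le (r-1)a$, contradicting $b\ge (r-1)a+1$. Carrying out this bookkeeping so that the span estimates actually close is the delicate point; alternatively one reads the thin sink branch directly off the explicit shape of $T_{(a,b)}$ constructed in \cite{Bi3}.

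Finally suppose $(r-\tfrac{1}{r-1})a<b<\tfrac{r+\sqrt{r^2-4}}{2}a$. Proposition \ref{Proposition:Result3} yields $l\in\NN$ with $(x,y):=\Phi_r^l(a,b)$ lying in one of the two previous ranges, together with $Z\in\{S_{(x,y)},T_{(x,y)}\}$ satisfying $\dimu\pi_\lambda(\tau^{-l}_{C_r}Z)=(a,b)$. By the two cases already settled $Z$ has a thin sink branch, so Corollary \ref{Corollary:DimensionShift} shows that $\tau^{-l}_{C_r}Z$ has a thin sink branch as well; since $\tau^{-l}_{C_r}Z$ is indecomposable, I may set $B_{(a,b)}:=\tau^{-l}_{C_r}Z$ and the proof is complete.
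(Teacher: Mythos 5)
Your first and third cases are sound. In the range $a\le b\le (r-1)a+1$ your parity/degree count on the support tree of the thin representation $S_{(a,b)}$ (sum of sink-degrees equals the number of edges $a+b-1$, so not every sink can have degree $\ge 2$ when $a\le b$) is a clean, correct way to produce a thin sink leaf, and is if anything tidier than the paper's argument, which instead walks along the support following arrows $\gamma$ with $\pi(\gamma)=\gamma_1$ until it hits a sink leaf. The reduction of the third range to the first two via Proposition \ref{Proposition:Result3} and Corollary \ref{Corollary:DimensionShift} matches the paper exactly.

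The middle case, however, contains a genuine gap, and you acknowledge as much: you never actually produce a sink leaf of $T_{(a,b)}$, only sketch a contradiction scheme whose ``bookkeeping'' you do not close (and your fallback of reading the answer off the explicit construction in \cite{Bi3} is not a proof). The scheme is also aimed at the wrong target: you try to rule out ``every leaf is a source'' by a dimension estimate $b\le (r-1)a$, but the relevant fact is much stronger and essentially immediate from $T_{(a,b)}\in\Inj(r)$. If $x\in\supp(T_{(a,b)})$ is a source, then all $r$ arrows $\gamma\colon x\to y$ carry injective maps out of the nonzero space $(T_{(a,b)})_x$, so all $r$ neighbours of $x$ lie in the support and $|\supp(T_{(a,b)})\cap n_{C_r}(x)|=r\ge 3>1$; hence \emph{no} source of the support can be a leaf at all. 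Since the support is a finite tree with at least two vertices it has a leaf, which is therefore a sink $y$, thin by Proposition \ref{Proposition:Result2}; its unique support-neighbour $x$ is a source, and injectivity of $T_{(a,b)}(x\to y)$ into the one-dimensional space $(T_{(a,b)})_y$ forces $\dim_k (T_{(a,b)})_x=1$, giving the thin sink branch. This one-line observation is exactly how the paper handles the case, and your proof is incomplete without it or an equivalent substitute.
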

\begin{proof}
We first consider the case that $a \leq b \leq (r-1)a +1$. Let $S_{(a,b)}$ be a representation as in Lemma $\ref{Lemma:ExistenceThinInjectiveArrow}$. Since $(a,b)$ is an imaginary root, $a \neq 0$ and we find a source $x \in \supp(S_{(a,b)})$. Let $\gamma \colon x \to y$ be the unique arrow starting in $x$ such that $\pi(\gamma) = \gamma_1$. Since $S_{(a,b)}(\gamma)$ is injective, we conclude $y \in \supp(S_{(a,b)})$. If $y$ is a leaf, we are done since $S_{(a,b)}$ is thin. Otherwise we find $z \in n_{C_r}(y) - \{x\} \cap \supp(S_{(a,b)})$. Since the underlying graph of $\supp(S_{(a,b)})$ is a finite tree, we can continue the argument until we find a leaf of $S_{(a,b)}$ in $C^-_r$, say $q$. Let $p$ be the unique vertex in $n_{C_r}(q) \cap \supp(S_{(a,b)})$. Then $p \to q$ is a thin sink branch.\\
For $(r-1)a+1 \leq b \leq (r-\frac{1}{r-1})a$ we consider $T_{(a,b)}$ as in Proposition $\ref{Proposition:Result2}$ and note that each leaf of $T_{(a,b)}$ is in $C^-_r$. Let $x \to y$ be a leaf branch. Since $T_{(a,b)}(x \to y)$ is injective and $\dim_k {T_{(a,b)}}_y = 1$, $T_{(a,b)}$ has a thin sink branch.\\
For $(r - \frac{1}{r-1})a < b < a(\frac{r+\sqrt{r^2-4}}{2})$ we apply the above considerations in conjunction with Proposition $\ref{Proposition:Result3}$ and Corollary $\ref{Corollary:DimensionShift}$.
 \end{proof}

\begin{proposition}\label{Proposition:ExistenceThinSinkBranch}
Let $(a,b) \in \Delta_+(\Gamma_r)$ be a root of $\Gamma_r$ such that $1 \leq a \leq b$. Then there is an indecomposable representation $B_{(a,b)} \in \rep(C_r)$ with thin sink branch such that $\dimu \pi_\lambda(B_{(a,b)}) = (a,b)$.
\end{proposition}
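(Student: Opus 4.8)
The goal is to extend Corollary \ref{Corollary:ThinSinkBranch} from imaginary roots to all positive roots $(a,b)$ with $1 \leq a \leq b$. The plan is to reduce the general case to the imaginary case that is already settled, by handling the remaining real roots directly. Recall that by the description $(\blacktriangledown)$ of $\Delta^{\im}_+(\Gamma_r)$ together with the equivalence $(\blacklozenge)$, the roots $(a,b)$ with $1 \leq a \leq b$ split into the imaginary ones, for which Corollary \ref{Corollary:ThinSinkBranch} already produces the required $B_{(a,b)}$, and the real roots, characterized by $q_{\Gamma_r}(a,b) = 1$. So the only work is to treat real roots $(a,b)$ with $1 \leq a \leq b$.

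First I would dispose of the small real roots. The condition $a \leq b \leq (r-1)a + 1$ still allows Lemma \ref{Lemma:ExistenceThinInjectiveArrow} to apply (its hypothesis is exactly $u \leq v \leq (r-1)u+1$, with no imaginarity assumption), and the tree-walking argument in the first paragraph of the proof of Corollary \ref{Corollary:ThinSinkBranch} only used that $a \neq 0$ to locate a source in the support. Since here $a \geq 1$, that argument goes through verbatim: starting from a source, follow the injective $\gamma_1$-arrow into the support, and continue through the finite support tree until reaching a leaf in $C^-_r$, whose unique support-neighbour gives a thin sink branch. Thus every real root with $a \leq b \leq (r-1)a+1$ is covered. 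Comparing with Lemma \ref{Lemma:ExistenceThinInjectiveArrow}, this already handles a large band of roots above the diagonal.

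It remains to check that there are no real roots $(a,b)$ with $1 \leq a \leq b$ lying strictly above the line $b = (r-1)a+1$. The key computation is that for fixed $a \geq 1$ the largest $b$ with $b \leq (r-1)a+1$ and $q_{\Gamma_r}(a,b) \geq 0$ coincides with the real-root value. Concretely, I would argue as follows: a real root satisfies $q_{\Gamma_r}(a,b) = a^2 + b^2 - rab = 1$, so $b$ is the larger solution of $b^2 - (ra)b + (a^2-1) = 0$, namely $b = \tfrac{ra + \sqrt{r^2a^2 - 4a^2 + 4}}{2}$. One then verifies $\tfrac{ra + \sqrt{r^2a^2-4a^2+4}}{2} \leq (r-1)a + 1$, which after clearing the square root reduces to the inequality $r^2a^2 - 4a^2 + 4 \leq \big((r-2)a + 2\big)^2 = (r-2)^2 a^2 + 4(r-2)a + 4$, i.e. to $0 \leq 4(r-2)a - 4a + 4 = 4(r-3)a + 4$ after simplification; since $r \geq 2$ and $a \geq 1$ this holds (and I would double-check the $r = 2$ boundary, where all roots with $a \leq b$ are on or below the relevant line). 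Hence every real root with $a \leq b$ already satisfies $b \leq (r-1)a + 1$ and is caught by the small-root case.

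The main obstacle is precisely this last inequality: one must confirm that the region $(r-1)a+1 < b$ treated by Proposition \ref{Proposition:Result2} and the higher bands of Proposition \ref{Proposition:Result3} contain only imaginary roots, so that no real root escapes the elementary tree-walking construction. Once that is verified, the proof assembles cleanly: given $(a,b) \in \Delta_+(\Gamma_r)$ with $1 \leq a \leq b$, either $q_{\Gamma_r}(a,b) \leq 0$ and $(a,b)$ is imaginary, so Corollary \ref{Corollary:ThinSinkBranch} applies, or $q_{\Gamma_r}(a,b) = 1$ and then by the inequality above $a \leq b \leq (r-1)a+1$, so Lemma \ref{Lemma:ExistenceThinInjectiveArrow} together with the support-tree argument produces the desired $B_{(a,b)}$ with a thin sink branch. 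I would present the argument in this two-case form, quoting Corollary \ref{Corollary:ThinSinkBranch} for the imaginary case and reproducing (or referencing) the tree-walking step for the real case.
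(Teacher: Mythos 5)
Your reduction of the real-root case collapses at the step you yourself flagged as the main obstacle: the claim that every real root $(a,b)$ with $1 \leq a \leq b$ satisfies $b \leq (r-1)a+1$ is false for $r \geq 3$. The algebra goes wrong when you clear the square root: the difference $\bigl((r-2)a+2\bigr)^2 - \bigl((r^2-4)a^2+4\bigr)$ equals $4(r-2)a(1-a)$, not $4(r-3)a+4$, so the required inequality is equivalent to $(r-2)a(a-1) \leq 0$, which fails whenever $r \geq 3$ and $a \geq 2$. Concretely, $\dimu P_3 = (r, r^2-1)$ is a real root with $r^2 - 1 > (r-1)r + 1$ for all $r \geq 3$ (e.g.\ $(3,8)$ for $r=3$, where $(r-1)a+1 = 7 < 8$); in fact, since preprojective roots above the diagonal satisfy $b/a > L_r > r-1$, all but the smallest of them escape the band $b \leq (r-1)a+1$, and they also escape the ranges of Proposition \ref{Proposition:Result2} and Proposition \ref{Proposition:Result3}, which only cover slopes up to $L_r$. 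So no real root with $a \geq 2$ is reached by the covering lemmas at all, and the tree-walking argument has nothing to walk on.

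The paper closes exactly this gap by a separate construction: a real root with $1 \leq a \leq b$ is, by $(\blacklozenge)$, the dimension vector of a non-simple preprojective $P_i$, $i \geq 2$; one exhibits explicit representations in $\rep(C_r)$ with thin sink branches pushing down to $\dimu P_2 = (1,r)$ and $\dimu P_3 = (r,r^2-1)$ (a thin star at a source, and $\tau^{-1}_{C_r}$ of a simple at a sink, respectively), and then propagates the thin sink branch along the whole preprojective series via Corollary \ref{Corollary:DimensionShift} and the compatibility of $\pi_\lambda$ with $\tau^{-l}_{C_r}$. Your treatment of the imaginary case by quoting Corollary \ref{Corollary:ThinSinkBranch} is fine, but the real case needs this $\tau^{-1}$-orbit argument (or something equivalent), not a reduction to Lemma \ref{Lemma:ExistenceThinInjectiveArrow}.
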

\begin{proof}
In view of Corollary $\ref{Corollary:ThinSinkBranch}$ it remains to consider the case $q_{\Gamma_r}(a,b) = 1$. We conclude with $(\blacklozenge)$ and $1 \leq a \leq b$ that $(a,b)$ is the dimension vector of an indecomposable preprojective representation that is not simple. Hence we find $i \geq 2$ such that $(a,b) = \dimu P_i$.\\
We fix a source $x \in (C_r)_0$ and consider the thin indecomposable representation $X \in \rep(C_r)$ such that $\supp(X) = \{x\} \cup n_{C_r}(x)$.  Let $y \in n_{C_r}(x)$ and consider the simple representation $Y$ in $y$. Application of Lemma $\ref{Lemma:DimensionShift}$ shows that $\tau^{-1}_{C_r} Y$ has a thin sink branch. We conclude with Corollary $\ref{Corollary:DimensionShift}$ that $\tau^{-l}_{C_r}( \tau^{-1}_{C_r} Y) = \tau^{-(l+1)}_{C_r} Y$ and $\tau^{-l}_{C_r} X$ have a thin sink branch for all $l \geq 0$. \\
Note that $\dimu \pi_\lambda(X) = (1,r) = \dimu P_2$ and  $\dimu \pi_\lambda(\tau^{-1}_{C_r} Y) = (r,r^2-1) = \dimu P_3$. Since $\pi_\lambda$ interchanges with $\tau^{-l}_{C_r}$ for all $l \geq 0$, we conclude that $\dimu \pi_\lambda(\tau^{-l}_{C_r} X) = \dimu P_{2l+2}$ and  $\dimu \pi_\lambda(\tau^{-(l+1)}_{C_r} Y) = \dimu P_{2l+3}$. 
\end{proof}

\section{An inequality given by the quadratic form}

Let $Q = (Q_0,Q_1,s,t)$ be a finite quiver. For $x \in Q_0$ we define $Q^{x,s}_1 := \{ \gamma \in Q_1 \mid s(\gamma) = x\}$ as well as $Q^{x,t}_1 := \{ \gamma \in Q_1 \mid t(\gamma) = x\}$. In \cite{Wiede2} the author introduced the notion of representations of maximal rank type for representations of $Q$. A representation $M \in \rep(Q)$ has \textsf{maximal rank type} if for each vertex $x \in Q_0$ and all non-empty subsets $\cA \subseteq Q^{x,s}_1$, $\cB \subseteq Q^{x,t}_1$ the natural $k$-linear maps 
\[ M_{x,s,\cA} := \bigoplus_{\gamma \in \cA} M_{s(\gamma)} \to M_x, \quad M_{x,t,\cB} := M_x \to \bigoplus_{\gamma \in \cB} M_{t(\gamma)}\]
have maximal rank. In his thesis \cite{Wiede1} he gave a refined version of this definition, that allowed arbitrary non-trivial linear combinations of the involved maps, and he proved that if $\delta \in \Delta^{\re}_+(Q)$ is a real root of $Q$, then the unique indecomposable representation with dimension vector $\delta$ has maximal rank type. We adapt his nice proof of this result to our situation to show:

\begin{Theorem}$($see \cite[4.2.2]{Wiede1}$)$\label{Proposition:Wiedemann}
Let $\delta \in \Delta_+(\Gamma_r)$ be such that $q_{\Gamma_r}(\delta) + |\delta_1 - \delta_2| \geq 1$. Let $M \in \rep(\Gamma_r)$ be indecomposable such that $\dimu M = \delta$, then $M$ has the equal kernels property or the equal images property.
 \end{Theorem}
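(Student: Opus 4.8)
My plan is to recast the equal kernels property as a monomorphism property on the universal cover and then run a Coxeter induction modelled on \cite[4.2.2]{Wiede1}, the only genuinely new input being a rank estimate for imaginary roots. First I would remove the symmetry: the duality $D=\Hom_k(-,k)$ followed by the identification $\Gamma_r^{\op}\cong\Gamma_r$ interchanges $\EKP(r)$ and $\EIP(r)$ and sends $\delta=(\delta_1,\delta_2)$ to $(\delta_2,\delta_1)$, while fixing both $q_{\Gamma_r}$ and $|\delta_1-\delta_2|$. Hence I may assume $\delta_1\le\delta_2$ and it suffices to show that every indecomposable $M$ with $\dimu M=\delta$ satisfies $M^\alpha$ injective for all $\alpha\neq0$ (for $\delta_1\le\delta_2$ this is exactly Wiedemann's maximal rank condition). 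For $r\ge2$ the hypothesis even forces $\delta_1<\delta_2$, since on the diagonal $q_{\Gamma_r}(a,a)=(2-r)a^2\le0$ makes the inequality fail; and if $\delta_1=0$ then $\delta=(0,1)$ and $M\cong I_1$ lies in $\EKP(r)$ trivially.

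The useful reformulation is via covering. By Theorem \ref{TheoremRingelGabriel} every indecomposable $M$ with $\dimu M=\delta$ is a push-down $\pi_\lambda(L)$ of an indecomposable $L\in\rep(C_r)$, and by Theorem \ref{Theorem:INJEKP}, $\pi_\lambda(L)\in\EKP(r)$ if and only if $L\in\Inj(r)$, i.e.\ every arrow map of $L$ is injective. So the goal becomes: for every indecomposable $L\in\rep(C_r)$ with $\dimu\pi_\lambda(L)=\delta$, all structure maps of $L$ are monomorphisms. If $q_{\Gamma_r}(\delta)=1$ then by $(\blacklozenge)$ the module $M$ is not regular, and since $\delta_1<\delta_2$ it is preprojective; this is the real-root case, handled by the explicit injective maps of $P_1=(0,1)$ and $P_2=(1,r)$ and propagated up the preprojective component (see below), exactly as in \cite[4.2.2]{Wiede1}.

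For the regular case $q_{\Gamma_r}(\delta)\le0$ I would stratify the $\Phi_r$-orbit. A direct computation shows the region $\{\delta:\delta_1\le\delta_2,\;q_{\Gamma_r}(\delta)+\delta_2-\delta_1\ge1\}$ is stable under $\Phi_r^{-1}$: indeed $q_{\Gamma_r}$ is $\Phi_r$-invariant and
\[ \big((\Phi_r^{-1}\delta)_2-(\Phi_r^{-1}\delta)_1\big)-(\delta_2-\delta_1)=(r-2)\big((r+1)\delta_2-\delta_1\big)\ge0, \]
whereas $\Phi_r$ attracts the ray $\delta_2/\delta_1$ toward the eigendirection $\tfrac{r-\sqrt{r^2-4}}{2}<1$ and hence eventually leaves the half-space $\delta_1\le\delta_2$. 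Thus every regular root in the region is $\Phi_r^{-l}\delta_0$ for a unique boundary root $\delta_0$ (with $\Phi_r\delta_0$ outside the region) and some $l\ge0$. Since $\tau^{-1}_{\Gamma_r}$ is an auto-equivalence of the regular part inducing $\Phi_r^{-1}$ on dimension vectors, the indecomposables over $\delta$ are exactly the $\tau^{-l}_{\Gamma_r}$ of those over $\delta_0$; as $\pi_\lambda\circ\tau^{-1}_{C_r}=\tau^{-1}_{\Gamma_r}\circ\pi_\lambda$, the claim for $\delta$ then follows from the claim for $\delta_0$ together with a propagation lemma: $\Inj(r)$ is stable under $\tau^{-1}_{C_r}$, which I would extract from the dimension recursion of Lemma \ref{Lemma:DimensionShift} along the almost split sequences of $C_r$ (the same lemma disposes of the preprojective propagation above).

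The main obstacle is the base case, where $\delta_0$ is imaginary and lies on the boundary $q_{\Gamma_r}(\delta_0)+\delta_{0,2}-\delta_{0,1}\ge1$. Here there are infinitely many indecomposables and no rigidity to exploit, so I must prove uniformly that every one of them has the equal kernels property; this is precisely the step where Wiedemann's argument must be re-engineered. Assuming some $M^{\alpha_0}$ is not injective, I would normalise $\alpha_0$, pass to the cover, and use the rank drop at the corresponding sink together with Lemma \ref{Lemma:DimensionShift} to produce a subrepresentation whose push-down dimension vector forces $q_{\Gamma_r}(\delta_0)+\delta_{0,2}-\delta_{0,1}\le0$, contradicting the hypothesis. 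Converting the single global inequality into injectivity of every arrow, simultaneously over the whole boundary family, is the heart of the proof and the exact point at which the threshold $q_{\Gamma_r}(\delta)+|\delta_1-\delta_2|\ge1$ is used.
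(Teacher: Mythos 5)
Your proposal has two genuine gaps, the first of which is fatal to the whole strategy. You reduce the problem to the universal cover by asserting that ``by Theorem \ref{TheoremRingelGabriel} every indecomposable $M$ with $\dimu M=\delta$ is a push-down $\pi_\lambda(L)$''. That theorem says only that $\pi_\lambda$ preserves indecomposability and describes its fibres up to the $G(r)$-action; it does not say $\pi_\lambda$ is dense, and indeed it is not. Already for $\delta=(1,1)$ a gradable indecomposable must have exactly one nonzero arrow map, whereas indecomposables with dimension vector $(1,1)$ are parametrized by $\PP^{r-1}$; the paper itself speaks of ``gradable indecomposable representations, i.e.\ indecomposable representations in the essential image of $\pi_\lambda$'' precisely because these form a proper subclass. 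Since the statement quantifies over \emph{all} indecomposables with dimension vector $\delta$, an argument that only sees the essential image of $\pi_\lambda$ cannot prove it. (Covering theory is used in the paper only for the converse implication, where one merely has to \emph{exhibit} one bad indecomposable, and a gradable witness suffices.)

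Second, even within the gradable world your induction bottoms out at the imaginary boundary roots, and there you only describe a hoped-for contradiction (``produce a subrepresentation whose push-down dimension vector forces $q_{\Gamma_r}(\delta_0)+\delta_{0,2}-\delta_{0,1}\le0$'') while conceding that this conversion of a rank drop into a dimension-vector estimate ``is the heart of the proof''. As stated it does not even get off the ground: the kernel of a single arrow map sits at a source of $C_r$, so it is not a subrepresentation, and no mechanism is given for extracting one with a controlled dimension vector. The paper's actual argument sidesteps all of this uniformly for real and imaginary roots and for arbitrary (not necessarily gradable) $M$: given $\alpha$, one builds a representation $X$ of the augmented quiver $\widehat{\Gamma}_r$ with an extra vertex carrying $\im M^\alpha$, proves $X$ indecomposable by comparing with the base-changed representation $g^{-1}.M$, and then Kac's inequality $q_{\widehat{\Gamma}_r}(\dimu X)\le 1$ collapses to
\[
1\;\ge\; q_{\Gamma_r}(\delta)+\bigl(\delta_1-\rk M^\alpha\bigr)\bigl(\delta_2-\rk M^\alpha\bigr),
\]
which contradicts $q_{\Gamma_r}(\delta)+|\delta_1-\delta_2|\ge1$ as soon as $M^\alpha$ fails to be injective. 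If you want to salvage your outline you would have to either restrict your covering reduction to a construction of counterexamples (the other implication) or replace the base case by an argument of this Kac-type that applies to every indecomposable.
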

\begin{proof}
The duality $D_{\Gamma_r} \colon \rep(\Gamma_r) \to \rep(\Gamma_r)$ introduced in \cite[2.2]{Wor1} satisfies $D_{\Gamma_r}(\EKP(r)) = \EIP(r)$ and $\dim_k (D_{\Gamma_r}L)_i = \dim_k L_{3-i}$ for all $L \in \rep(\Gamma_r)$ and $i \in \{1,2\}$. Therefore we can assume without loss of generality that $\delta_1 \leq \delta_2$. Let $\alpha \in k^r \setminus \{0\}$. We show that $M^\alpha \colon M_1 \to M_2$ is injective. Let $(\alpha,\beta_2,\ldots,\beta_r)$ be a basis of $k^r$. We define a new representation $X$ for the quiver $\widehat{\Gamma}_r$
\[
\xymatrix{
 & 3 \ar^{\nu}@/^/[rd]&  \\
 1 \ar^{\eta_2}_{\vdots}@/^1pc/[rr]  \ar_{\eta_r}@/_1pc/[rr] \ar^{\eta}@/^/[ru]& & 2 
}\]
by the following data: $X_i := M_i$ for $i \in \{1,2\}$, $X_3 := \im M^\alpha$, $X(\eta) := M^\alpha$, $X(\nu) \colon \im M^\alpha \to M_2$ is the natural embedding and $X(\eta_i) := M^{\beta_i}$ for $i \in \{2,\ldots,r\}$. We claim that $X \in \rep(\widehat{\Gamma}_r)$ is indecomposable:\\
We define the representation $N \in \rep(\Gamma_r)$ by setting $N_j := M_j$ for $j \in \{1,2\}$, $N(\gamma_1) := M^\alpha$ and $N(\gamma_i) = M^{\beta_i}$ for all $i \in \{1,\ldots,r\}$. 
Given $Y \in \rep(\widehat{\Gamma}_r)$ we assign the representation $\cF(Y) \in \rep(\Gamma_r)$ given by $\cF(Y)_i := Y_i$ for $i \in \{1,2\}$, $\cF(Y)(\gamma_1) := Y(\nu) \circ Y(\eta)$ and $\cF(Y)(\gamma_i) := Y(\eta_i)$ for $i \in \{2,\ldots,r\}$. Cleary $\cF(U \oplus V) \cong U \oplus V$ for all $U,V \in \rep(\widehat{\Gamma}_r)$.\\
Note that $\GL_r(k)$ acts on $\rep(\Gamma_r)$ via base change, i.e. $(g^{-1}.L)(\gamma_j) = \sum^r_{i=1} g_{ij}L(\gamma_i)$ for all $L \in \rep(\Gamma_r)$, $g \in \GL_r(k)$ and $j \in \{1,\ldots,r\}$. Since $N \cong g^{-1}.M$ for the element $g \in \GL_r(k)$ with columns $\alpha,\beta_2,\ldots,\beta_r$, we conclude that $N$ is indecomposable.\\
Assume that $X = U \oplus V$. Then $\cF(U) \oplus \cF(V) \cong \cF(X) \cong N$ is indecomposable and without loss of generality $\cF(U) = 0$.  Hence $U_1 = 0 = U_2$. Since $U$ is a subrepresentation of $X$ and $X(\nu) \colon X_3 \to X_1$ is injective, we conclude that $U(\nu) \colon U_3 \to U_2$ is injective. Hence $U = 0$ and $X$ is indecomposable.

\vspace{3pt}

\noindent We conclude with Theorem $\ref{Theorem:Kac}(i),(ii)$ that
\begin{align*}
1 \geq q_{\widehat{\Gamma}_r}(\dimu X) &= (\dim_k M_1)^2 + (\dim_k M_2)^2 + (\dim_k \im M^\alpha)^2  - (r-1)(\dim_k M_1)(\dim_k M_2) \\
&\ \ \  - (\dim_k M_1)(\dim_k \im M^\alpha) - (\dim_k \im M^\alpha)(\dim_k M_2)\\
&= q_{\Gamma_r}(\dimu M) + (\dim_k \im M^\alpha)^2 \\
&\ \ \  +(\dim_k M_1)(\dim_k M_2) - (\dim_k M_1)(\dim_k \im M^\alpha) - (\dim_k M_2)(\dim_k \im M^\alpha)\\
&=q_{\Gamma_r}(\delta) + (\dim_k M_1 - \dim_k \im M^\alpha)(\dim_k M_2 - \dim_k \im M^\alpha).
\end{align*}
If $\dim_k \im M^\alpha \neq \dim_k M_1$, then $1-q_{\Gamma_r}(\delta) \geq \dim_k M_2 - \dim_k \im M^\alpha > \dim_k M_2 - \dim_k M_1$. Hence $0 \geq  q_{\Gamma_r}(\delta) + \dim_k M_2 - \dim_k M_1 = q_{\Gamma_r}(\delta) + |\delta_1 - \delta_2|$, a contradiction. This shows that $M^\alpha$ is injective. Since $\alpha \in k^r - \{0\}$ was arbitrary, we conclude $M \in \EKP(r)$.
\end{proof}

\begin{Definition}
Let $\delta \in \Delta_+(\Gamma_r)$ be a positive root of $\Gamma_r$. We say that $\delta$ has the \textsf{equal kernels property}, provided $M \in \EKP(r)$ for every indecomposable representation $M \in \rep(\Gamma_r)$ with $\dimu M = \delta$. We say that $\delta$ has the \textsf{equal images property}, provided every indecomposable representation $M \in \rep(\Gamma_r)$ with dimension vector $\dimu M = \delta$ is located in $\EIP(r)$. We denote the corresponding sets of dimension vectors by $\underline{\EKP}(r)$ and $\underline{\EIP}(r)$, respectively.
\end{Definition}

\begin{corollary}\label{Corollary:Wiedemann} The following statements hold.
\begin{enumerate}
\item We have $\Delta^{\re}_+(\Gamma_r) \subseteq \underline{\EKP}(r) \cup \underline{\EIP}(r)$.
\item We have $ \underline{\EKP}(1) \cup \underline{\EIP}(1) = \Delta_+(\Gamma_1) = \{\delta \in \Delta_+(\Gamma_1) \mid q_{\Gamma_1}(\delta)+|\delta_1-\delta_2| \geq 1 \}$.
\item We have $\underline{\EKP}(2) \cup \underline{\EIP}(2) = \Delta^{\re}_+(\Gamma_2) = \{\delta \in \Delta_+(\Gamma_2) \mid q_{\Gamma_2}(\delta)+|\delta_1-\delta_2| \geq 1 \}$.
\item For $r \geq 2$ we have $\underline{\EIP}(r) \cap \underline{\EKP}(r) = \emptyset$.
\end{enumerate}
\end{corollary}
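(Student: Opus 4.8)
The plan is to handle the four parts in order, using Theorem \ref{Proposition:Wiedemann} for every inclusion \emph{into} $\underline{\EKP}(r) \cup \underline{\EIP}(r)$ and Westwick's Theorem (recalled in the introduction), together with the duality $D_{\Gamma_r}$ of the proof of Theorem \ref{Proposition:Wiedemann}, for the exclusions. For $(1)$ I would start from the description $\Delta^{\re}_+(\Gamma_r) = \{(a,b) \in \NN^2_0 \mid q_{\Gamma_r}(a,b) = 1\}$ recorded in Section 2.2, so that any real root $\delta$ satisfies $q_{\Gamma_r}(\delta) + |\delta_1 - \delta_2| \geq 1$ automatically. Theorem \ref{Proposition:Wiedemann} then shows that the indecomposable $M$ with $\dimu M = \delta$ lies in $\EKP(r)$ or in $\EIP(r)$. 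The decisive point is that this indecomposable is \emph{unique} by Kac's Theorem \ref{Theorem:Kac}$(iii)$; hence ``$M$ has the equal kernels or the equal images property'' upgrades to ``$\delta \in \underline{\EKP}(r)$ or $\delta \in \underline{\EIP}(r)$'', which is exactly $(1)$.

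Parts $(2)$ and $(3)$ combine $(1)$ with an exclusion of the imaginary roots. Since $\underline{\EKP}(r) \cup \underline{\EIP}(r) \subseteq \Delta_+(\Gamma_r)$ by definition, the containment ``$\supseteq$'' in both statements is immediate from $(1)$. For $r = 1$ the box $(\blacktriangledown)$ gives $\Delta^{\im}_+(\Gamma_1) = \emptyset$, so $\Delta_+(\Gamma_1) = \Delta^{\re}_+(\Gamma_1)$ and every such root has $q_{\Gamma_1} = 1$; thus $q_{\Gamma_1}(\delta) + |\delta_1 - \delta_2| \geq 1$ holds throughout, which yields all three equalities of $(2)$. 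For $r = 2$ the imaginary roots are precisely the diagonal vectors $(a,a)$ with $a \geq 1$ by $(\blacktriangledown)$, and I would exclude each of these from the union by Westwick's Theorem: an indecomposable with the equal kernels property and dimension vector $(a,a)$ would force $0 = \delta_2 - \delta_1 \geq r - 1 = 1$ (the alternative $\delta = (0,1)$ being ruled out), which is absurd, so $(a,a) \notin \underline{\EKP}(2)$; applying the same to $D_{\Gamma_2}$, which swaps the two coordinates and interchanges $\EKP$ and $\EIP$, removes $(a,a)$ from $\underline{\EIP}(2)$ as well. This gives $\underline{\EKP}(2) \cup \underline{\EIP}(2) = \Delta^{\re}_+(\Gamma_2)$, and the remaining equality is the elementary computation $q_{\Gamma_2}(a,b) + |a-b| = (a-b)^2 + |a-b| \geq 1 \iff |a - b| \geq 1 \iff (a,b) \in \Delta^{\re}_+(\Gamma_2)$.

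For $(4)$ I would argue by contradiction: suppose $\delta \in \underline{\EKP}(r) \cap \underline{\EIP}(r)$ and choose an indecomposable $M$ with $\dimu M = \delta$, which exists by Kac's Theorem \ref{Theorem:Kac}$(i)$. Then every $M^\alpha$ with $\alpha \in k^r \setminus \{0\}$ is simultaneously injective and surjective, forcing $\delta_1 = \delta_2 =: a$, and $a \geq 1$ since $\delta \neq 0$. Westwick's Theorem applied to $M \in \EKP(r)$ now gives $0 = \delta_2 - \delta_1 \geq r - 1$, contradicting $r \geq 2$.

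The conceptual subtlety to keep in mind throughout is the asymmetry between inclusions and exclusions of dimension vectors: to place a root \emph{into} $\underline{\EKP}(r) \cup \underline{\EIP}(r)$ one needs \emph{all} indecomposables of that dimension vector to behave, which for real roots comes for free from the uniqueness in Kac's Theorem $(iii)$, whereas to \emph{remove} a root it suffices to exhibit a single badly behaved indecomposable, and Westwick's Theorem supplies this uniformly on the diagonal $\delta_1 = \delta_2$. The only genuinely computational steps are the routine verifications that for $r \in \{1,2\}$ the inequality $q_{\Gamma_r}(\delta) + |\delta_1 - \delta_2| \geq 1$ cuts out exactly the real roots, so I do not expect a serious obstacle once the above framework is in place.
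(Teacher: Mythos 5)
Your proposal is correct and follows essentially the same route as the paper: Theorem \ref{Proposition:Wiedemann} for the inclusions, the computation $q_{\Gamma_2}(\delta)=(\delta_1-\delta_2)^2$ to identify the imaginary roots of $\Gamma_2$ as the diagonal vectors, and Westwick's Theorem for all exclusions. Your extra appeal to Kac's Theorem $(iii)$ in part $(1)$ is a harmless refinement (the proof of Theorem \ref{Proposition:Wiedemann} already shows that \emph{every} indecomposable of a fixed dimension vector with $\delta_1\leq\delta_2$ lands in $\EKP(r)$, so uniqueness is not strictly needed), and the rest matches the paper's argument.
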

\begin{proof} $(a)$ This follows immediatly from $\ref{Proposition:Wiedemann}$ since $\delta \in \Delta^{\re}_+(\Gamma_r)$ implies that $q_{\Gamma_r}(\delta) = 1$. \\
$(b)$ Recall that $q_{\Gamma_1}$ is positive definite and apply $(a)$.\\
$(c)$ Let $\delta \in \Delta^{\im}_+(\Gamma_2)$. Then $(\delta_1 - \delta_2)^2 = q_{\Gamma_2}(\delta) \leq 0$ and therefore $\delta_1 = \delta_2$. By Westwick's Theorem every indecomposable representation with dimension vector $\delta$ is not in $\EKP(r) \cup \EIP(r)$. Hence $\delta \not\in \underline{\EKP}(r) \cup \underline{\EIP}(r)$. We also have in this case $q_{\Gamma_2}(\delta) + |\delta_1 - \delta_2| = 0 \leq 1$. This proves $(c)$.\\
$(d)$  This follows for example from Westwick's Theorem.
\end{proof}

\section{Dimension vectors with the equal kernels property}\label{Section:4}

\noindent From now on we assume that $r \geq 3$, let $L_r := \frac{r+\sqrt{r^2-4}}{2}$ and denote by $\lfloor \ \rfloor \colon \mathbb{R}_{\geq 0} \to \NN_0$ the floor function. Recall from $(\blacktriangledown)$ that if $(a,b) \in \NN^2$ and $a \leq b$, then $(a,b)$ is an imaginary root if and only if $a \leq b \leq a L_r$.  Note that 
\begin{equation}
\boxed{r - 1 < L_r < r \ \text{and} \  r - L_r = \frac{1}{L_r}.} \tag{$\star$}
\end{equation}

\noindent In this section we show that each positive root $\delta$ in $\underline{\EKP}(r) \cup \underline{\EIP}(r)$ satisfies $q_{\Gamma_r}(\delta) + |\delta_1 - \delta_2| \geq 1$.

\subsection{Restrictions on the imaginary root $\delta$}

In view of the preliminary considerations from section $\ref{Subsection:UniversalCovering}$, we already get the following restriction for dimension vectors in $\underline{\EKP}(r) \cup \underline{\EIP}(r)$:

\begin{Theorem}\label{Theorem:NotMaximalRank1}
Let $(a,b) \in \Delta^{\im}_+(\Gamma_r)$ be an imaginary root such that $a \leq b <  \lfloor a L_r \rfloor$ or $b \leq a < \lfloor b L_r \rfloor$, then $(a,b) \notin \underline{\EKP}(r) \cup \underline{\EIP}(r)$.
\end{Theorem}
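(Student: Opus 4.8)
The plan is to reduce to the case $a \le b$ using the duality $D_{\Gamma_r}$ of \cite{Wor1}, and then to exhibit one indecomposable representation of dimension vector $(a,b)$ that fails the equal kernels property. Since $D_{\Gamma_r}$ interchanges $\EKP(r)$ and $\EIP(r)$ and reverses the dimension vector, the case $b \le a < \lfloor b L_r \rfloor$ will follow from the case $a \le b < \lfloor a L_r \rfloor$ applied to the swapped vector $(b,a)$. So I would assume $1 \le a \le b < \lfloor a L_r \rfloor$ throughout the main argument. The first observation is that this hypothesis is exactly the statement that $(a,b+1)$ is again a positive root: since $b$ and $\lfloor a L_r \rfloor$ are integers, $b < \lfloor a L_r \rfloor$ gives $b+1 \le \lfloor a L_r \rfloor \le a L_r$, and together with $a \le b+1$ this yields $(a,b+1) \in \Delta^{\im}_+(\Gamma_r)$ by $(\blacktriangledown)$. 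This extra room is precisely what the construction consumes.

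The key step is to pass to the universal cover. I would apply Proposition \ref{Proposition:ExistenceThinSinkBranch} to the root $(a,b+1)$ to obtain an indecomposable $B := B_{(a,b+1)} \in \rep(C_r)$ with a thin sink branch and $\dimu \pi_\lambda(B) = (a,b+1)$. Let $x \to y$ be the thin sink branch, so that $x$ is a source, $y \in C_r^-$ is a leaf of $B$ with $\dim_k B_x = \dim_k B_y = 1$ and $x$ the unique neighbour of $y$ in $\supp(B)$; write $\gamma_{i_0} := \pi(x \to y)$. I then delete this leaf: let $M$ be the restriction of $B$ to $\supp(B) \setminus \{y\}$, that is $M_y = 0$ and $M_z = B_z$ otherwise. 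As $y \in C_r^-$, this lowers only the second coordinate of the push-down, so $\dimu \pi_\lambda(M) = (a,b)$.

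Two points then need checking. First, $M$ is indecomposable: given a decomposition $M = U \oplus V$ with, say, $U_x \ne 0$, one re-attaches the one-dimensional space $B_y$ (with the isomorphism $B(x \to y)$) to the summand containing $x$, which is legitimate because $x$ is the only neighbour of $y$; this produces a decomposition of $B$, and indecomposability of $B$ forces $V = 0$. Second, $N := \pi_\lambda(M) \notin \EKP(r)$: in $C_r$ the source $x$ has a unique arrow mapping to $\gamma_{i_0}$, namely $x \to y$, whose target is now $0$, so the summand $M_x \subseteq N_1$ lies in the kernel of $N(\gamma_{i_0})$; as $\dim_k M_x = 1$, this map is not injective, and Theorem \ref{Theorem:INJEKP} gives $N \notin \EKP(r)$. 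By Theorem \ref{TheoremRingelGabriel}$(a)$, $N$ is indecomposable with $\dimu N = (a,b)$, whence $(a,b) \notin \underline{\EKP}(r)$.

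It remains to see $(a,b) \notin \underline{\EIP}(r)$. If $a < b$ this is immediate: no linear map $k^a \to k^b$ can be surjective, so no representation of dimension vector $(a,b)$ lies in $\EIP(r)$, while the root $(a,b)$ certainly admits an indecomposable representative. If $a = b$, I would instead apply $D_{\Gamma_r}$ to the representation $N$ just built to obtain an indecomposable of the same dimension vector $(a,a)$ lying outside $\EIP(r)$. The only genuinely non-formal point is the pair of verifications in the third paragraph — that leaf deletion simultaneously preserves indecomposability and destroys injectivity of the $\gamma_{i_0}$-map; both are short once Proposition \ref{Proposition:ExistenceThinSinkBranch} supplies the thin sink branch, and the conceptual heart of the proof is simply that "$(a,b+1)$ is still a root'' is exactly what allows $(a,b)$ to be realised by amputating a thin sink leaf.
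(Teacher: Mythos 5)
Your argument is correct and follows the paper's proof almost step for step: reduce by duality to $a \leq b$, observe that $(a,b+1)$ is still a root, invoke Proposition $\ref{Proposition:ExistenceThinSinkBranch}$ to obtain $B_{(a,b+1)} \in \rep(C_r)$ with a thin sink branch $x \to y$, remove the leaf $y$ to get an indecomposable representation over $C_r$ whose push-down has dimension vector $(a,b)$ and a non-injective structure map, and conclude with Theorem $\ref{Theorem:INJEKP}$. The one place you diverge is in how the leaf is removed: the paper applies the simple reflection at the sink $y$ to $\dimu B_{(a,b+1)}$, viewed as a root of the support quiver, notes that the reflected vector is again a root with $y$-entry $0$, and takes \emph{any} indecomposable $F$ with that dimension vector via Kac's theorem; you instead construct the representation explicitly as the quotient of $B_{(a,b+1)}$ killing $B_y$ and verify indecomposability by hand. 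Your variant is more elementary (no appeal to Kac at this step), but it relies on the small observation you left implicit, namely that $B_{(a,b+1)}(x \to y)$ is nonzero --- otherwise the simple at $y$ would split off, contradicting indecomposability --- so that re-attaching $B_y$ to the summand containing $x$ genuinely produces a direct sum decomposition of $B_{(a,b+1)}$. Your explicit treatment of the $\underline{\EIP}(r)$ half (trivial for $a<b$, via $D_{\Gamma_r}$ for $a=b$) and of the symmetric case fills in details the paper leaves to the reader.
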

\begin{proof}
We consider the case $a \leq b < \lfloor a  L_r \rfloor$. We set $u := a$ and $v: = b + 1$. Since $b < \lfloor a L_r \rfloor$, we have $u \leq v \leq \lfloor u  L_r \rfloor$. In view of Proposition $\ref{Proposition:ExistenceThinSinkBranch}$ we find an indecomposable representation $B_{(u,v)} \in \rep(C_r)$ such that $\dimu \pi_\lambda(B_{(u,v)}) = (u,v)$ and $B_{(u,v)}$ has a thin sink branch $x \to y$. Let $Q \subseteq C_r$ be the full connected subquiver with vertex set $\supp(B_{(u,v)})$. We know that $\beta := \dimu B_{(u,v)}$ is a root for $Q$. We apply the reflection at $y$ and conclude that the vector $\beta^\prime$ given by
\[\beta^\prime_z = \begin{cases}
\beta_z, z \in Q_0 - \{y\} \\
 0, z = y
\end{cases}
\] is a root for $Q$. Hence we find an indecomposable representation $F \in \rep(Q) \subseteq \rep(C_r)$ such that $\dimu F = \beta^\prime$. In particular, $\dim_k F_x = 1$, $\dim_k F_y = 0$ and $\dimu \pi_\lambda(F) = (u,v-1) = (a,b+1-1) = (a,b)$. We conclude with $\ref{Theorem:INJEKP}$ that $\pi_\lambda(F) \not\in \EKP(r)$, since the map $F(x \to y)$ is not injective. Hence we have found an indecomposable representation with dimension vector $(a,b)$ that is not in $\EKP(r)$.
\end{proof}

\begin{corollary}\label{Corollary:DimensionVectorIsMaximal}
Let $(a,b) \in \Delta^{\im}_+(\Gamma_r)$ such that $q_{\Gamma_r}(a,b) + |a - b| \geq 1$, then $b = \lfloor a  L_r \rfloor$ or $a = \lfloor b  L_r \rfloor$.
\end{corollary}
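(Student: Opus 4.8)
The plan is to prove the contrapositive on each side of the symmetric conclusion, after first normalizing the order of the coordinates. Both $q_{\Gamma_r}$ and $|a-b|$ are invariant under interchanging $a$ and $b$, and the conclusion ``$b = \lfloor a L_r\rfloor$ or $a = \lfloor b L_r\rfloor$'' is itself symmetric under this swap. So I may assume without loss of generality that $a \le b$ and then reduce the task to showing $b = \lfloor a L_r\rfloor$; the case $b \le a$ follows by the identical argument with the two vertices interchanged.

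Assuming $a \le b$, I would first pin down the range of $b$. Since $(a,b)$ is an imaginary root, the description recalled from $(\blacktriangledown)$ at the start of this section gives $a \le b \le a L_r$; as $b$ is an integer, this already forces $b \le \lfloor a L_r\rfloor$. Hence exactly one of two cases occurs: either $b = \lfloor a L_r\rfloor$, which is the desired conclusion, or $a \le b < \lfloor a L_r\rfloor$. It remains to exclude the latter.

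So suppose toward a contradiction that $a \le b < \lfloor a L_r\rfloor$. This is precisely the hypothesis of Theorem~$\ref{Theorem:NotMaximalRank1}$, which therefore yields $(a,b) \notin \underline{\EKP}(r) \cup \underline{\EIP}(r)$. On the other hand, the standing assumption $q_{\Gamma_r}(a,b) + |a-b| \ge 1$ is exactly the inequality hypothesized in Theorem~$\ref{Proposition:Wiedemann}$, so every indecomposable representation with dimension vector $(a,b)$ has the equal kernels property or the equal images property; that is, $(a,b) \in \underline{\EKP}(r) \cup \underline{\EIP}(r)$. These two statements are contradictory, so the case $b < \lfloor a L_r\rfloor$ is impossible and $b = \lfloor a L_r\rfloor$.

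I expect the argument to be essentially bookkeeping, the only delicate point being the exact matching of the strict inequality $b < \lfloor a L_r\rfloor$ in Theorem~$\ref{Theorem:NotMaximalRank1}$ with the trichotomy produced from $b \le \lfloor a L_r\rfloor$. As an independent check that avoids the representation-theoretic input entirely, one can argue purely arithmetically: using $L_r + L_r^{-1} = r$ one factors $q_{\Gamma_r}(a,b) = (b - a L_r)(b - a L_r^{-1})$, and if $b \le \lfloor a L_r\rfloor - 1$ then $a L_r - b \ge 1$, so, since $b - a L_r^{-1} > 0$,
\[ q_{\Gamma_r}(a,b) + (b-a) = (b - a L_r)(b - a L_r^{-1}) + (b-a) \le -(b - a L_r^{-1}) + (b-a) = a(L_r^{-1} - 1) < 0, \]
contradicting $q_{\Gamma_r}(a,b) + |a-b| \ge 1$. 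Either route completes the proof.
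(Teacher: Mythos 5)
Your main argument is exactly the paper's proof: the corollary is stated there as an immediate consequence of Theorem~\ref{Proposition:Wiedemann} and Theorem~\ref{Theorem:NotMaximalRank1}, and your write-up just makes explicit the symmetry reduction and the trichotomy $b\le\lfloor aL_r\rfloor$ coming from $(\blacktriangledown)$, which is indeed all that is needed. Your closing ``independent check'' is a genuinely different and noteworthy route: the factorization $q_{\Gamma_r}(a,b)=(b-aL_r)(b-aL_r^{-1})$ together with $aL_r-b\ge 1$ and $b-aL_r^{-1}>0$ gives $q_{\Gamma_r}(a,b)+(b-a)\le a(L_r^{-1}-1)<0$, which proves the corollary purely arithmetically, with no representation theory at all; the paper's route, by contrast, buys nothing extra here but is free once Theorems~\ref{Proposition:Wiedemann} and~\ref{Theorem:NotMaximalRank1} are in place. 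Both versions are correct.
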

\begin{proof}
This now follows from Theorem $\ref{Proposition:Wiedemann}$ and Theorem $\ref{Theorem:NotMaximalRank1}$.
\end{proof}

\subsection{Filtrations with regular filtration factors}

\noindent Let $(a,b) \in \Delta^{\im}_+(\Gamma_r)$ be an imaginary root and assume that $(a,b) \in \underline{\EKP}(r) \cup \underline{\EIP}(r)$. By duality, we can assume that $a \leq b$. We would like to show that $q_{\Gamma_r}(a,b) + b-a = q_{\Gamma_r}(a,b) + |a-b| \geq 1$. In view of Theorem $\ref{Theorem:NotMaximalRank1}$ we can therefore assume that $1 \leq a \leq b = \lfloor a L_r \rfloor$. \\
\noindent In the following we prove that if an indecomposable representation $M$ with dimension vector $(a,b)$ and $1 \leq a \leq b = \lfloor a L_r \rfloor$ has a filtration 
\[ 0 = M_0 \subset M_1 \subset \ldots \subset M_{n-1} \subset M_n = M\]
with regular indecomposable filtration factors $M_i/M_{i-1}$, then there is at most one $l \in \{1,\ldots,n\}$ such that $M_l/M_{l-1}$ does not have the equal kernels property. Since some of the proofs are rather technical, we have relegated them to the appendix. 
\noindent Recall that we assume throughout that  $r \geq 3$.

\begin{Lemma}\label{Lemma:DistanceToCa}
Let $1 \leq a \leq b \leq a L_r$ such that $q_{\Gamma_r}(a,b)+b-a \leq 0$, then 
\[ a L_r - b \geq \frac{1}{2}.\]
\end{Lemma}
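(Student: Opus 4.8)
The plan is to exploit the factorization of the Tits form over its two eigenvalues $L_r$ and $\tfrac{1}{L_r}$, which are exactly the roots of $t^2 - rt + 1$. From this one reads off $L_r + \tfrac{1}{L_r} = r$, $L_r \cdot \tfrac{1}{L_r} = 1$, and, using $(\star)$ (namely $r - L_r = \tfrac{1}{L_r}$), also $L_r - \tfrac{1}{L_r} = 2L_r - r = \sqrt{r^2-4}$. Viewing $q_{\Gamma_r}(a,b) = b^2 - rab + a^2$ as a monic quadratic in $b$ with roots $aL_r$ and $\tfrac{a}{L_r}$, I obtain the identity
\[ q_{\Gamma_r}(a,b) = (b - aL_r)\bigl(b - \tfrac{a}{L_r}\bigr). \]
This is the structural input that converts the hypothesis into a one-variable estimate.

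Next I would introduce the quantity to be bounded, $d := aL_r - b \geq 0$, and substitute. Since $b - aL_r = -d$ and $b - \tfrac{a}{L_r} = a\bigl(L_r - \tfrac{1}{L_r}\bigr) - d = a\sqrt{r^2-4} - d$, the factorization yields $q_{\Gamma_r}(a,b) = d^2 - d\,a\sqrt{r^2-4}$. Adding $b - a = aL_r - a - d$ and collecting terms, the hypothesis $q_{\Gamma_r}(a,b) + b - a \leq 0$ becomes $f(d) \leq 0$, where
\[ f(d) := d^2 - d\bigl(a\sqrt{r^2-4} + 1\bigr) + a(L_r - 1). \]
The one computation worth flagging is the cancellation $aL_r - \tfrac{a\sqrt{r^2-4}}{2} = \tfrac{ar}{2}$, coming directly from the definition of $L_r$, which I will use to evaluate $f$ at the relevant point.

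Finally I would argue by contradiction: suppose $d < \tfrac12$. The parabola $f$ opens upward with vertex at $d^{*} = \tfrac{a\sqrt{r^2-4}+1}{2}$, and for $r \geq 3$, $a \geq 1$ one has $\sqrt{r^2-4} \geq \sqrt{5}$, so $d^{*} \geq \tfrac{\sqrt{5}+1}{2} > \tfrac12$. Hence $f$ is strictly decreasing on $[0,\tfrac12]$ and $f(d) > f(\tfrac12)$ whenever $0 \leq d < \tfrac12$. Evaluating with the cancellation above gives $f(\tfrac12) = \tfrac{a(r-2)}{2} - \tfrac14$, and since $a \geq 1$ and $r - 2 \geq 1$ force $a(r-2) \geq 1$, we get $f(\tfrac12) \geq \tfrac14 > 0$. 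Thus $f(d) > 0$, contradicting $f(d) \leq 0$, and therefore $d = aL_r - b \geq \tfrac12$.

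I do not anticipate a genuine obstacle: once the factorization is written down the statement collapses to the analysis of a single upward parabola on $[0,\tfrac12)$. The only delicate point to keep in mind is that the constant $\tfrac12$ in the conclusion is essentially sharp, which shows up in the extremal case $r = 3$, $a = 1$, where $f(\tfrac12) = \tfrac14$ is positive but small; this is exactly the case where one should check that the margin remains strictly positive.
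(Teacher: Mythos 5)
Your proof is correct; every identity checks out (the factorization $q_{\Gamma_r}(a,b)=(b-aL_r)(b-\tfrac{a}{L_r})$, the reduction to $f(d)=d^2-d(a\sqrt{r^2-4}+1)+a(L_r-1)\leq 0$ with $d=aL_r-b\geq 0$, the location of the vertex, and $f(\tfrac12)=\tfrac{a(r-2)}{2}-\tfrac14\geq\tfrac14>0$). The paper reaches the same conclusion by a closely related but differently organized computation: it treats the hypothesis as a quadratic inequality in $b$, completes the square to get $(b+\tfrac{1-ra}{2})^2\leq\tfrac{(r^2-4)a^2+(4-2r)a+1}{4}$, takes square roots, and rewrites the resulting lower bound for $-b$ as $aL_r-b\geq\tfrac12+\tfrac{a\sqrt{r^2-4}-\sqrt{(r^2-4)a^2+(4-2r)a+1}}{2}$, finishing by noting that $(4-2r)a+1\leq 0$ makes the second summand nonnegative. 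So the paper solves for $b$ directly and bounds a square root, whereas you change variables to the deficit $d$ and run a contradiction off the monotonicity of an upward parabola on $[0,\tfrac12)$; your version avoids square-root manipulations entirely and makes the role of the eigenvalue factorization and of the constant $\tfrac12$ more transparent, at the cost of an extra case-style argument (vertex location plus monotonicity) that the paper's direct inequality chain does not need. Both arguments use only $r\geq 3$ and $a\geq 1$ at the same final step, and your sharpness observation at $r=3$, $a=1$ is consistent with the paper's remark following the lemma.
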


\begin{Remark}
\noindent Note that the assumption $q_{\Gamma_r}(a,b)+b-a \leq 0$ is necessary, since for $r = 3$, $(a,b) = (2,5) \in \Delta^{\im}_+(\Gamma_3)$ we have $a L_r - b \cong 0.237 < \frac{1}{2}$.
\end{Remark}

\begin{Lemma}\label{Lemma:Average}
Let $1 \leq a \leq b = \lfloor a L_r \rfloor$ and assume there are $n \in \NN$ and $a_i,b_i \in \NN$ such that $\sum^n_{i=1} a_i = a, \sum^n_{i=1} b_i = b$ with $q_{\Gamma_r}(a_i,b_i) \leq 0$ for all $i \in \{1,\ldots,n\}$. Then $a_i\leq b_i = \lfloor a_i L_r \rfloor$ for all $i \in \{1,\ldots,n\}$.
\end{Lemma}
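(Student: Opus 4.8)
The plan is to reduce the whole statement to two elementary observations about $L_r$: a factorization of the Tits form, and the superadditivity of the floor function. Recall from $(\star)$ that $L_r$ and $\tfrac{1}{L_r} = r - L_r$ are precisely the two roots of $t^2 - rt + 1$, so that
\[ q_{\Gamma_r}(x,y) = x^2 + y^2 - rxy = \bigl(y - L_r x\bigr)\bigl(y - \tfrac{1}{L_r}x\bigr). \]
First I would use this to control each summand. For $a_i,b_i \in \NN$ with $\tfrac{a_i}{L_r} \leq a_i L_r$, the hypothesis $q_{\Gamma_r}(a_i,b_i) \leq 0$ forces $\tfrac{a_i}{L_r} \leq b_i \leq a_i L_r$; in particular $b_i \leq a_i L_r$, and since $b_i$ is an integer this upgrades to
\[ b_i \leq \lfloor a_i L_r \rfloor \qquad \text{for every } i \in \{1,\ldots,n\}. \]

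Next I would sum these inequalities and combine them with the global hypothesis $b = \lfloor a L_r \rfloor$. Writing $a = \sum_i a_i$ and using superadditivity of the floor, $\bigl\lfloor \sum_i t_i \bigr\rfloor \geq \sum_i \lfloor t_i \rfloor$, applied to $t_i = a_i L_r$, I obtain the chain
\[ b = \lfloor a L_r \rfloor = \Bigl\lfloor \textstyle\sum_i a_i L_r \Bigr\rfloor \geq \sum_i \lfloor a_i L_r \rfloor \geq \sum_i b_i = b. \]
Hence every inequality here is an equality. In particular $\sum_i \lfloor a_i L_r \rfloor = \sum_i b_i$, and because the inequality $\lfloor a_i L_r \rfloor \geq b_i$ holds termwise, each term must coincide, giving $b_i = \lfloor a_i L_r \rfloor$ for all $i$. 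Finally, to see $a_i \leq b_i$, I would note that $L_r > 1$ and $a_i \geq 1$ give $a_i L_r > a_i$, whence $b_i = \lfloor a_i L_r \rfloor \geq a_i$, completing the proof.

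I do not anticipate a genuine obstacle, as the argument is essentially a tightness argument. The only point that warrants care is recognizing \emph{where} the hypothesis enters: the computation uses only the upper bound $b_i \leq a_i L_r$ from $q_{\Gamma_r}(a_i,b_i) \leq 0$ (not the full interval), and the decisive step is the superadditivity inequality, which is exactly where the assumption $b = \lfloor a L_r \rfloor$ — rather than the weaker $b \leq a L_r$ — is indispensable: it pins the total $\sum_i b_i$ to its maximal admissible value and thereby forces each factor $b_i$ to be maximal as well.
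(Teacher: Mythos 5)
Your proof is correct and follows essentially the same route as the paper: bound each $b_i$ by $\lfloor a_i L_r\rfloor$ using $q_{\Gamma_r}(a_i,b_i)\leq 0$ (the paper cites $(\blacktriangledown)$ where you use the explicit factorization of $q_{\Gamma_r}$), then exploit that $b=\lfloor aL_r\rfloor$ leaves no slack in the sum. The paper phrases this last step as a proof by contradiction (a single deficient term would give $b+1\leq aL_r$), which is just the contrapositive of your superadditivity squeeze.
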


\begin{proposition}\label{Proposition:Filtration}
Let $M \in \rep(\Gamma_r)$ be a regular indecomposable representation such that $\dimu M = (a,b)$ satisfies $a \leq b = \lfloor aL_r \rfloor$. Let 
\[0 = M_0 \subset M_1 \subset \ldots \subset M_{n-1} \subset M_n = M\]
be a filtration such that $M_i/M_{i-1}$ is regular indecomposable for all $i \in \{1,\ldots,n\}$. We define $(a_i,b_i) := \dimu M_i/M_{i-1}$ for all $i \in \{1,\ldots,n\}$. Then $a_i \leq b_i = \lfloor a_i L_r \rfloor$ for all $i \in \{1,\ldots,n\}$ and one of the following statements holds.
\begin{enumerate}
\item[$(i)$] For all $i \in \{1,\ldots,n\}$ we have $q_{\Gamma_r}(a_i,b_i) + b_i - a_i \geq 1$ and $M \in \EKP(r)$.
\item [$(ii)$] There exists exactly one $l \in \{1,\ldots,n\}$ such that $q_{\Gamma_r}(a_l,b_l) + b_l - a_l < 1$.
\end{enumerate}
\end{proposition}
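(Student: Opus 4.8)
The plan is to combine the additivity of dimension vectors along the filtration with the two preparatory lemmas. First I would note that $\dimu$ is additive on short exact sequences, so that $\sum_{i=1}^n a_i = a$ and $\sum_{i=1}^n b_i = b = \lfloor a L_r \rfloor$. Since each factor $M_i/M_{i-1}$ is regular indecomposable, the equivalence $(\blacklozenge)$ gives $q_{\Gamma_r}(a_i,b_i) \leq 0$, and $(\blacktriangledown)$ shows $(a_i,b_i) \in \Delta^{\im}_+(\Gamma_r)$, so in particular $a_i, b_i \geq 1$. With these hypotheses Lemma~\ref{Lemma:Average} applies verbatim and yields $a_i \leq b_i = \lfloor a_i L_r \rfloor$ for every $i$, which is the first assertion of the proposition.

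For the dichotomy I would count the factors violating the inequality. Observe first that $q_{\Gamma_r}(a_i,b_i) + b_i - a_i$ is an integer, so the condition $q_{\Gamma_r}(a_l,b_l) + b_l - a_l < 1$ is equivalent to $q_{\Gamma_r}(a_l,b_l) + b_l - a_l \leq 0$. For any such index $l$, the already established $a_l \leq b_l \leq a_l L_r$ together with Lemma~\ref{Lemma:DistanceToCa} gives $a_l L_r - b_l \geq \tfrac{1}{2}$. Now I would sum the nonnegative quantities $a_i L_r - b_i$ (nonnegative since $b_i = \lfloor a_i L_r\rfloor \leq a_i L_r$): by additivity, $\sum_{i=1}^n (a_i L_r - b_i) = a L_r - b = a L_r - \lfloor a L_r\rfloor < 1$. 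If two distinct indices $l_1 \neq l_2$ both satisfied $q_{\Gamma_r} + b - a \leq 0$, their contributions alone would already give $\sum_{i=1}^n (a_i L_r - b_i) \geq \tfrac12 + \tfrac12 = 1$, a contradiction. Hence at most one index $l$ can have $q_{\Gamma_r}(a_l,b_l)+b_l-a_l < 1$.

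This splits the situation into exactly the two stated cases. If no such index exists, then $q_{\Gamma_r}(a_i,b_i) + |a_i - b_i| = q_{\Gamma_r}(a_i,b_i) + b_i - a_i \geq 1$ for all $i$, and since $a_i \leq b_i$ the proof of Theorem~\ref{Proposition:Wiedemann} places each factor $M_i/M_{i-1}$ in $\EKP(r)$: in that argument the case $\delta_1 \leq \delta_2$ is treated directly and concludes injectivity of every $M^\alpha$, i.e. membership in $\EKP(r)$, with no appeal to the duality. It then remains to see that $\EKP(r)$ is closed under extensions, so that the filtration forces $M \in \EKP(r)$. This follows from the snake lemma applied, for a fixed $\alpha$, to the ladder with exact rows $0 \to U_j \to M_j \to V_j \to 0$ ($j \in \{1,2\}$) and vertical maps $U^\alpha, M^\alpha, V^\alpha$ attached to a short exact sequence $0 \to U \to M \to V \to 0$: vanishing of the outer kernels forces vanishing of the middle one. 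Alternatively it follows from Theorem~\ref{Theorem:Worch}(1) and left-exactness of $\Hom_{\Gamma_r}(X_\alpha,-)$. Induction on the filtration length then gives $M \in \EKP(r)$, which is case~$(i)$; if instead exactly one violating index exists we are in case~$(ii)$.

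The only genuinely delicate point is the interface with Theorem~\ref{Proposition:Wiedemann}: one must extract from its proof that, under the normalization $\delta_1 \leq \delta_2$, the conclusion is specifically the equal kernels property rather than merely the disjunction ``$\EKP$ or $\EIP$'', and then invoke the extension-closure of $\EKP(r)$. Everything else is a bookkeeping combination of the additivity of $\dimu$ with Lemmas~\ref{Lemma:Average} and~\ref{Lemma:DistanceToCa}.
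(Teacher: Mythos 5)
Your proof is correct and follows essentially the same route as the paper: additivity of $\dimu$, Lemma \ref{Lemma:Average} for $a_i \leq b_i = \lfloor a_i L_r\rfloor$, Lemma \ref{Lemma:DistanceToCa} to exclude two violating factors, and Theorem \ref{Proposition:Wiedemann} plus extension-closure of $\EKP(r)$ for case $(i)$. The only (harmless) variation is that you derive the contradiction by summing the defects $a_iL_r-b_i$ directly against $aL_r-\lfloor aL_r\rfloor<1$, whereas the paper regroups the two offending factors into $(a_i+a_j,b_i+b_j)$ and reapplies Lemma \ref{Lemma:Average}; your observation about reading off $\EKP$ (rather than ``$\EKP$ or $\EIP$'') from the normalization $\delta_1\leq\delta_2$ in Theorem \ref{Proposition:Wiedemann} is exactly the reading the paper intends.
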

\begin{proof}
Cleary we can assume that $n \geq 2$. Note that 
\[(a,b) = \dimu M = \sum^n_{i=1} \dimu M_i/M_{i-1} = \sum^n_{i=1} (a_i,b_i).\]
Since $M_i/M_{i-1}$ is regular indecomposable, we conclude with $(\blacklozenge)$ that $q_{\Gamma_r}(a_i,b_i) \leq 0$ for all $i \in \{1,\ldots,n\}$ and therefore $1 \leq a_i \leq b_i = \lfloor a_i L_r \rfloor$ by Lemma $\ref{Lemma:Average}$. We assume that there are $i \neq j \in \{1,\ldots,n\}$ such that $q_{\Gamma_r}(a_i,b_i) + b_i - a_i \leq 0$ and $q_{\Gamma_r}(a_j,b_j) + b_j - a_j \leq 0$.
Then Lemma $\ref{Lemma:DistanceToCa}$ implies that $a_i + a_j \leq b_i+b_j \leq L_r(a_i+a_j)-1$. Hence $b_i+b_j < \lfloor L_r(a_i+a_j) \rfloor$, in contradiction to Lemma $\ref{Lemma:Average}$, since 
 \[(a,b) = (a_i+a_j,b_i+b_j) + \sum_{l \in \{1,\ldots,n\} \setminus \{i,j\}} (a_l,b_l)\]
and $q_{\Gamma_r}(a_i+a_j,b_i+b_j) \leq 0$ by $(\blacktriangledown)$. Hence there is at most one element $l \in \{1,\ldots,n\}$ such that $q_{\Gamma_r}(a_l,b_l) + b_l - a_l \leq 0$. If there is no such $l$, then Theorem $\ref{Proposition:Wiedemann}$ implies that $M_i/M_{i-1} \in \EKP(r)$ for all $i \in \{1,\ldots,n\}$ and therefore $M \in \EKP(r)$, as $\EKP(r)$ is closed under extensions.
\end{proof}

\subsection{Proof of the main theorem}

\begin{Lemma}\label{Lemma:NonSplit}
Let $\Lambda$ be a finite dimensional $k$-algebra, $X,Y$ be finitely generated $\Lambda$-modules and $Y$ be indecomposable. Let $\delta \colon 0 \to X \stackrel{f}{\to} E \to Y \to 0$ be a short exact sequence.  Assume that $U \mid E$ is a non-zero direct summand of $E$ such that $\Hom_\Lambda(X,U) = 0$, then $\delta$ splits.
\end{Lemma}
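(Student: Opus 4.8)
The plan is to use the vanishing $\Hom_\Lambda(X,U)=0$ to confine the image of $f$ to a complement of $U$, and then to let the indecomposability of $Y$ force $f$ to be an isomorphism onto that complement. First I would fix a decomposition $E = U \oplus V$, with projections $p_U \colon E \to U$, $p_V \colon E \to V$ and inclusion $\iota_V \colon V \to E$. Since $p_U \circ f \in \Hom_\Lambda(X,U) = 0$, the map $f$ takes values in $\ker p_U = V$, so it corestricts to a map $f' \colon X \to V$ with $f = \iota_V \circ f'$; and $f'$ is injective because $f$ is.

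The key step is the cokernel computation. As $\im f = \iota_V(\im f')$ is a submodule of $V$, the assignment $(u,v)+\im f \mapsto (u, v+\im f')$ gives an isomorphism $E/\im f \cong U \oplus (V/\im f')$. Now $g$ induces an isomorphism $Y \cong E/\im f$, so $Y \cong U \oplus (V/\im f')$. Since $Y$ is indecomposable and $U \neq 0$, the second summand must vanish, i.e. $\im f' = V$. Thus $f'$ is both injective and surjective, hence an isomorphism $X \xrightarrow{\ \sim\ } V$.

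To finish I would produce a retraction of $f$: set $r := (f')^{-1} \circ p_V \colon E \to X$ and compute $r \circ f = (f')^{-1} \circ p_V \circ \iota_V \circ f' = (f')^{-1} \circ f' = \id_X$, using $p_V \circ \iota_V = \id_V$. Hence $f$ is a split monomorphism and $\delta$ splits. The argument is short, and I expect the only genuine insight to be the opening observation that $\Hom_\Lambda(X,U)=0$ forces $\im f \subseteq V$; after that, indecomposability does all the work. The single point that needs care is the cokernel computation — verifying, via the first isomorphism theorem, that quotienting $U \oplus V$ by a submodule of $V$ leaves the summand $U$ untouched — together with the routine fact that an indecomposable module admits no decomposition $A \oplus B$ with both factors nonzero.
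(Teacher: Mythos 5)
Your proof is correct, and its core is the same as the paper's: you both observe that $\Hom_\Lambda(X,U)=0$ forces $\im f$ into a complement $V$ of $U$, deduce $Y\cong E/\im f\cong U\oplus(V/\im f)$, and then use the indecomposability of $Y$ together with $U\neq 0$ to conclude $\im f=V$. Where you diverge is the final step. The paper stops at the isomorphisms $U\cong Y$ and $X\cong V$, concludes $E\cong X\oplus Y$, and then invokes the (true but not entirely trivial) fact that for finite-dimensional modules a short exact sequence whose middle term is abstractly isomorphic to the direct sum of the ends must split --- this is why the paper's proof explicitly cites finite dimensionality over $k$. You instead construct the splitting directly: since the corestriction $f'\colon X\to V$ is an isomorphism, the map $r=(f')^{-1}\circ p_V$ is a genuine retraction of $f$, and the computation $r\circ f=(f')^{-1}\circ p_V\circ\iota_V\circ f'=\id_X$ is immediate. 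This buys you two things: your argument is self-contained (no appeal to a cancellation-type theorem), and it needs no finiteness hypothesis at all --- it works for arbitrary modules over an arbitrary ring. The only hypothesis of the lemma you never use is that $X$ and $E$ are finitely generated, which is consistent, since that hypothesis is only needed for the paper's shortcut.
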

\begin{proof}
Clearly $U \neq E$ since $f \in \Hom_\Lambda(X,E)$ is non-zero. We write $U \oplus V = E$. Since $\Hom_{\Lambda}(X,U) = 0$, we have $\im f \subseteq V$ as therefore $Y \cong E/\im f \cong U \oplus (V/\im f)$. Since $Y$ is indecomposable and $U \neq 0$, we conclude $U \cong Y$ and $X \cong V$. Hence $E \cong X \oplus Y$ and $\delta$ splits, since every module is finite dimensional over $k$.
\end{proof}

\noindent The proof of the following result may be found in the appendix.

\begin{proposition}\label{Proposition:Technical}
Let $1 < a \leq b = \lfloor a L_r \rfloor$ such that $q_{\Gamma_r}(a,b) + b -a \leq 0$, then \begin{enumerate}
\item [$(i)$] $a -1 \leq b - (r-1)$,
\item[$(ii)$] $(a-1,b-(r-1))$ and $(x,y) := \Phi_r(a-1,b-(r-1))$ are imaginary roots, and
\item[$(iii)$] $q_{\Gamma_r}(x,y) + y - x \leq 0$.
\end{enumerate} 
\end{proposition}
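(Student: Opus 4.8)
The plan is to reduce all three parts to elementary integer inequalities, using three ingredients: the characterization $\Delta^{\im}_+(\Gamma_r)=\{(u,v)\in\NN^2\mid q_{\Gamma_r}(u,v)\le 0\}$ recorded in $(\blacktriangledown)$, the invariance $q_{\Gamma_r}(\Phi_r w)=q_{\Gamma_r}(w)$ of the Tits form under the Coxeter transformation (immediate from the matrix $\Phi_r$), and the floor estimate $b>aL_r-1$ coming from $b=\lfloor aL_r\rfloor$ together with $(\star)$. Part $(i)$ I would settle by integrality alone: since $L_r>r-1$ and $a\ge 2$, the floor estimate gives $b-a>a(r-2)-1\ge r-3$, and because $b-a\in\ZZ$ this forces $b-a\ge r-2$, i.e. $a-1\le b-(r-1)$. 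In particular $b':=b-(r-1)\ge a-1=:a'\ge 1$, so both coordinates of $(a',b')$ are positive.

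The computational heart is one identity, obtained by expanding:
\[ q_{\Gamma_r}(a-1,\,b-(r-1)) \;=\; q_{\Gamma_r}(a,b) + (r-2)\big[(r+1)a-b-1\big]. \]
Writing $E:=(r^2-r-1)a-(r-1)b-(r-2)$, I would extract two consequences. First, substituting the hypothesis $q_{\Gamma_r}(a,b)\le a-b$ into the identity yields $q_{\Gamma_r}(a',b')\le E$. Second, a direct evaluation of $(x,y)=\Phi_r(a',b')$ gives $y=ra-b-1$ and $y-x=-E$; combined with the Coxeter-invariance $q_{\Gamma_r}(x,y)=q_{\Gamma_r}(a',b')$ this produces the clean relation
\[ q_{\Gamma_r}(x,y) + y - x \;=\; q_{\Gamma_r}(a',b') - E \;=\; q_{\Gamma_r}(a,b)+b-a \;\le\; 0, \]
which is exactly statement $(iii)$. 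Thus $(iii)$ falls out for free once the identity is established.

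It then remains to prove $(ii)$, namely $q_{\Gamma_r}(a',b')\le 0$ and the positivity of $x,y$. By the first consequence above it suffices to show $E\le 0$, and this is the step I expect to be the main obstacle: as a \emph{real} inequality $E\le 0$ is false for small $a$, so one cannot argue with $b>aL_r-1$ directly. The resolution is to exploit that $E$ is an integer. Feeding $b>aL_r-1$ into $E$ and simplifying through $(r^2-r-1)-(r-1)L_r=-\tfrac{L_r-(r-1)}{L_r}<0$ gives $E<-\tfrac{L_r-(r-1)}{L_r}\,a+1<1$; since $E\in\ZZ$ this forces $E\le 0$. Hence $q_{\Gamma_r}(a',b')\le 0$ and, with $a',b'\ge 1$, $(a',b')\in\Delta^{\im}_+(\Gamma_r)$ by $(\blacktriangledown)$. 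Finally, as $(a',b')$ is an imaginary root with $a'\le b'$ we have $\tfrac{b'}{a'}<L_r$, and the elementary bound $L_r<\tfrac{r^2-1}{r}$ (equivalent to $L_r<r$, via $r-L_r=\tfrac{1}{L_r}$ from $(\star)$) shows $x=(r^2-1)a'-rb'>0$ and $y=ra'-b'>0$; Coxeter-invariance then gives $q_{\Gamma_r}(x,y)=q_{\Gamma_r}(a',b')\le 0$, so $(x,y)$ is an imaginary root as well.
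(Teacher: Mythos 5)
Your proof is correct, and its computational core coincides with the one in the appendix of the paper: your identity $q_{\Gamma_r}(a-1,b-(r-1))=q_{\Gamma_r}(a,b)+(r-2)\bigl[(r+1)a-b-1\bigr]$ is the paper's expansion of $q_{\Gamma_r}(u-1,v-(r-1))$, and your relation $q_{\Gamma_r}(x,y)+y-x=q_{\Gamma_r}(a,b)+b-a$ is exactly the conjugation identity the paper proves before deducing $(iii)$; part $(i)$ is also handled by essentially the same elementary estimate. The genuine divergence is in how you establish $E:=(r^2-r-1)a-(r-1)b-(r-2)\le 0$, which is the heart of $(ii)$. Observing that $E=\Phi_r(a,b)_1-\Phi_r(a,b)_2-(r-2)$, the paper argues by contradiction with an auxiliary root-theoretic lemma: if $(u,v):=\Phi_r(a,b)$ satisfied $u-v>r-2$, then $\Phi_r^{-1}(u,v)+(0,1)=(a,b+1)$ would again be an imaginary root, contradicting the maximality of $b=\lfloor aL_r\rfloor$. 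You instead feed $b>aL_r-1$ into $E$, use $(r^2-r-1)-(r-1)L_r=-\tfrac{L_r-(r-1)}{L_r}<0$ (a consequence of $(\star)$) to obtain $E<1$, and invoke integrality of $E$; your remark that the real inequality alone is insufficient is accurate, and the integrality step closes it. Both routes are valid: yours is more self-contained and purely numerical, while the paper's explains conceptually why the bound holds (it is precisely the statement that $(a,b+1)$ leaves the imaginary cone). A minor further difference is that you verify the positivity of the entries of $(x,y)$ explicitly via $b'/a'<L_r<\tfrac{r^2-1}{r}$, a point the paper leaves implicit when it asserts that $\Phi_r$ carries the imaginary root $(a-1,b-(r-1))$ to an imaginary root.
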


\begin{corollary}\label{Corollary:ExistenceNonSplit}
Let $(a,b) \in \Delta^{\im}_+(\Gamma_r)$ be an imaginary root such that $1 < a \leq b = \lfloor a L_r \rfloor$ and $q_{\Gamma_r}(a,b)+b-a \leq 0$. Let $M$ be
an indecomposable representation with dimension vector $(a-1,b-(r-1))$ and $\alpha \in k^r -0$. Then every non-split short exact sequence 
\[\delta \colon 0 \to X_\alpha \to E \to M \to 0\]
has an indecomposable middle term, where $X_\alpha$ belongs to the family of indecomposable representations introduced in Theorem $\ref{Theorem:Worch}$.
\end{corollary}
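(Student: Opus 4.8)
The plan is to argue by contradiction: assume $E$ decomposes as $E = \bigoplus_{j=1}^m E_j$ into indecomposables with $m \geq 2$, and derive a contradiction from the non-splitting of $\delta$ together with the numerical constraints already available for $(a,b)$. First I would record the basic data. Taking dimension vectors in $\delta$ gives $\dimu E = (1,r-1) + (a-1,b-(r-1)) = (a,b)$. Since $X_\alpha$ is regular by Theorem \ref{Theorem:Worch} and $\dimu M = (a-1,b-(r-1))$ is an imaginary root by Proposition \ref{Proposition:Technical}$(ii)$, so that $q_{\Gamma_r}(\dimu M) \le 0$ and $M$ is regular by $(\blacklozenge)$, the middle term $E$ is an extension of a regular representation by a regular representation and is therefore regular. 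Consequently every indecomposable summand $E_j$ is regular, so $q_{\Gamma_r}(a_j,b_j) \le 0$ where $(a_j,b_j) := \dimu E_j$. As $\dimu E = (a,b)$ satisfies $1 \le a \le b = \lfloor a L_r \rfloor$, Lemma \ref{Lemma:Average} applies to the decomposition $\sum_{j}(a_j,b_j) = (a,b)$ and yields $a_j \le b_j = \lfloor a_j L_r \rfloor$ for every $j$.

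Next I would feed the non-splitting into Lemma \ref{Lemma:NonSplit}. Because $M$ is indecomposable and $\delta$ does not split, no nonzero direct summand $U$ of $E$ can satisfy $\Hom_{\Gamma_r}(X_\alpha, U) = 0$; in particular $\Hom_{\Gamma_r}(X_\alpha, E_j) \neq 0$ for every $j$, so by Theorem \ref{Theorem:Worch}$(1)$ we have $E_j \notin \EKP(r)$ for all $j$. On the other hand, each summand satisfies $a_j \le b_j$, so if $q_{\Gamma_r}(a_j,b_j) + b_j - a_j \ge 1$ held for some $j$, then Theorem \ref{Proposition:Wiedemann} would force $E_j \in \EKP(r)$, a contradiction. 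Hence every summand is \emph{exceptional}, i.e.\ $q_{\Gamma_r}(a_j,b_j) + b_j - a_j \le 0$ for all $j \in \{1,\ldots,m\}$.

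Finally I would invoke the purely arithmetic dichotomy underlying Proposition \ref{Proposition:Filtration}. Assuming $m \ge 2$, pick two exceptional summands $E_i, E_j$. By Lemma \ref{Lemma:DistanceToCa} each satisfies $a_\bullet L_r - b_\bullet \ge \tfrac12$, whence $b_i + b_j \le L_r(a_i + a_j) - 1 < \lfloor L_r(a_i+a_j)\rfloor$. But the slope of $(a_i+a_j,b_i+b_j)$ is a weighted average of two slopes lying in the admissible interval, so $(a_i+a_j,b_i+b_j)$ is again an imaginary root with $q_{\Gamma_r}(a_i+a_j,b_i+b_j) \le 0$ by $(\blacktriangledown)$. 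Grouping this vector with the remaining summands exhibits $(a,b)$ as a sum of vectors of non-positive Tits form, so Lemma \ref{Lemma:Average} forces $b_i + b_j = \lfloor L_r(a_i+a_j)\rfloor$, contradicting the strict inequality above. Therefore $m = 1$ and $E$ is indecomposable.

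The main obstacle is the pincer in the second paragraph: one must use simultaneously that non-splitting makes \emph{every} summand receive a nonzero map from $X_\alpha$ (Lemma \ref{Lemma:NonSplit} together with Theorem \ref{Theorem:Worch}) and that a summand on the side $a_j \le b_j$ with $q_{\Gamma_r} + b_j - a_j \ge 1$ is automatically in $\EKP(r)$ (Theorem \ref{Proposition:Wiedemann}); reconciling these confines all summands to the exceptional locus, where Lemma \ref{Lemma:DistanceToCa} tolerates at most one. A secondary point requiring care is that Proposition \ref{Proposition:Filtration} is phrased for an indecomposable representation, so one should extract and reuse only its numerical dichotomy, which does not in fact use indecomposability of the total representation $E$.
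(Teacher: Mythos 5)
Your proof is correct and follows essentially the same route as the paper: the paper applies Proposition \ref{Proposition:Filtration} to $E$ to extract a non-exceptional summand lying in $\EKP(r)$ and then invokes Lemma \ref{Lemma:NonSplit} to split $\delta$, whereas you run the same ingredients (Lemma \ref{Lemma:NonSplit}, Theorem \ref{Theorem:Worch}, Theorem \ref{Proposition:Wiedemann}, Lemma \ref{Lemma:Average}, Lemma \ref{Lemma:DistanceToCa}) in the opposite order, first forcing every summand to be exceptional and then re-deriving the numerical dichotomy inline. Your closing remark that the indecomposability hypothesis in Proposition \ref{Proposition:Filtration} is not used in its numerical part is accurate and in fact makes your write-up slightly more careful than the paper's, which applies that proposition to the decomposable $E$ directly.
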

\begin{proof} By Proposition $\ref{Proposition:Technical}$ the element $(a-1,b-(r-1)) \in \Delta^{\im}_+(\Gamma_r)$ is an imaginary root and each indecomposable representation with this dimension vector is regular by $(\blacklozenge)$.
Let $\delta \colon 0 \to X_\alpha \to E \to M \to 0$ be a short exact sequence. We assume that $E$ is not indecomposable. Since $M$ and $X_\alpha$ are regular, we know that $E$ is a regular representation as regular representations are closed under extensions \cite[VIII.2.13]{Assem1}. Hence we find $l \geq 2$ and indecomposable regular representations $E_1,\ldots,E_l$ such that $E_1 \oplus \ldots \oplus E_l = E$. Since each $E_i$ is regular, we know that $\dimu E_i = (a_i,b_i) \in \NN^2$ for all $i \in \{1,\ldots,l\}$. According to Theorem $\ref{Theorem:Worch}$, we have $E \not\in \EKP(r)$ and Proposition $\ref{Proposition:Filtration}$ implies that there is exactly one $j \in \{1,\ldots,n\}$ such that $q_{\Gamma_r}(a_j,b_j) + b_j - a_j < 1$. Since $l \geq 2$ we find $i \in \{1,\ldots,n\}$ such that $q_{\Gamma_r}(a_i,b_i) + b_i - a_i \geq 1$ and Proposition $\ref{Proposition:Wiedemann}$ implies that $E_i \in \EKP(r)$. Hence $\Hom_{\Gamma_r}(X_\alpha,E_i) = 0$ and Lemma $\ref{Lemma:NonSplit}$ implies that $\delta$ splits, a contradiction.
\end{proof}

\begin{proposition}\label{Proposition:NotMaximalRank}
Let $(a,b) \in \Delta^{\im}_+(\Gamma_r)$ be an imaginary root such that $a \leq b = \lfloor a L_r \rfloor$ and $q_{\Gamma_r}(a,b) + b -a \leq 0$, then $(a,b)$ does not have the equal kernels property.
\end{proposition}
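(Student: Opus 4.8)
The plan is to establish the statement in existence form: I will produce, for every admissible $(a,b)$, a single indecomposable representation of dimension vector $(a,b)$ lying outside $\EKP(r)$, and I will do this by induction on $a$, combining the root-theoretic reduction of Proposition \ref{Proposition:Technical} with the extension input of Corollary \ref{Corollary:ExistenceNonSplit}. For the base case $a=1$ the hypothesis forces $b=\lfloor L_r\rfloor=r-1$, so $(a,b)=(1,r-1)$; by Theorem \ref{Theorem:Worch}$(3)$ the indecomposables of this dimension vector are precisely the $X_\alpha$, and since $\id\in\Hom_{\Gamma_r}(X_\alpha,X_\alpha)\neq 0$, Theorem \ref{Theorem:Worch}$(1)$ gives $X_\alpha\notin\EKP(r)$, hence $(1,r-1)\notin\underline{\EKP}(r)$.

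For the inductive step let $a>1$. By Proposition \ref{Proposition:Technical} the vector $(x,y):=\Phi_r(a-1,b-(r-1))$ is an imaginary root with $q_{\Gamma_r}(x,y)+y-x\leq 0$, and one checks that $x\leq y$ and, using $L_r^2>4$, that $1\leq x<a$. If $y<\lfloor xL_r\rfloor$, then Theorem \ref{Theorem:NotMaximalRank1} already yields $(x,y)\notin\underline{\EKP}(r)$; otherwise $y=\lfloor xL_r\rfloor$ and, since $x<a$, the induction hypothesis gives $(x,y)\notin\underline{\EKP}(r)$. In either case there is an indecomposable $N$ with $\dimu N=(x,y)$ and $N\notin\EKP(r)$. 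As $(x,y)$ is imaginary, $N$ is regular by $(\blacklozenge)$, hence neither projective nor injective, so $M:=\tau^{-1}_{\Gamma_r}N$ is indecomposable regular with $\dimu M=\Phi_r^{-1}(x,y)=(a-1,b-(r-1))$.

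I now feed $M$ into Corollary \ref{Corollary:ExistenceNonSplit}, and the crux is to produce a non-split sequence $0\to X_\alpha\to E\to M\to 0$. Since $N=\tau_{\Gamma_r}M\notin\EKP(r)$, the functorial description in Theorem \ref{Theorem:Worch}$(1)$ provides some $\alpha\in k^r\setminus\{0\}$ with $\Hom_{\Gamma_r}(X_\alpha,\tau_{\Gamma_r}M)\neq 0$; the Auslander-Reiten formula for the hereditary algebra $k\Gamma_r$ then yields
\[ \Ext^1_{\Gamma_r}(M,X_\alpha)\cong D\Hom_{\Gamma_r}(X_\alpha,\tau_{\Gamma_r}M)\neq 0,\]
so a non-split $\delta$ of the desired shape exists. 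By Corollary \ref{Corollary:ExistenceNonSplit} its middle term $E$ is indecomposable, and $\dimu E=\dimu X_\alpha+\dimu M=(1,r-1)+(a-1,b-(r-1))=(a,b)$. The monomorphism $X_\alpha\hookrightarrow E$ is a nonzero element of $\Hom_{\Gamma_r}(X_\alpha,E)$, so $E\notin\EKP(r)$ by Theorem \ref{Theorem:Worch}$(1)$, and therefore $(a,b)\notin\underline{\EKP}(r)$.

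The step I expect to be the main obstacle is the non-vanishing of $\Ext^1_{\Gamma_r}(M,X_\alpha)$: a direct Euler-form count is inconclusive, since $\langle\dimu M,\dimu X_\alpha\rangle_{\Gamma_r}$ may vanish (e.g.\ for $r=3$, $(a,b)=(3,7)$), so the argument must pass through the Auslander-Reiten formula and the characterization of $\EKP(r)$ by the family $(X_\alpha)$ rather than through dimensions alone. A secondary point needing care is verifying that the reduced root $(x,y)$ really lies in the regime $1\leq x\leq y\leq\lfloor xL_r\rfloor$ with $x<a$, which is what makes the induction well-founded and lets it close either through Theorem \ref{Theorem:NotMaximalRank1} or through the induction hypothesis.
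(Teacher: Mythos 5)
Your proposal is correct and follows essentially the same route as the paper's proof: induction on $a$ with base case $(1,r-1)$, reduction via Proposition \ref{Proposition:Technical} to $(x,y)=\Phi_r(a-1,b-(r-1))$, case analysis through Theorem \ref{Theorem:NotMaximalRank1} and the induction hypothesis, and then the non-split extension $0\to X_\alpha\to E\to\tau^{-1}_{\Gamma_r}Y\to 0$ obtained from the Auslander--Reiten formula and made indecomposable by Corollary \ref{Corollary:ExistenceNonSplit}. The only cosmetic difference is that where you assert (without full justification) that $x\le y$, the paper simply admits the possibility $x>y$ and dismisses it as trivial, since no indecomposable representation with first coordinate larger than the second can lie in $\EKP(r)$.
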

\begin{proof}
We prove the statement by induction on $a \geq 1$, the case $a = 1$ being trivial, since then $b = r - 1$ and for each $\alpha \in k^r \setminus \{0\}$ we have $\dimu X_\alpha = (a,b)$ with $X_\alpha \not\in \EKP(r)$ by $\ref{Theorem:Worch}$. Now assume that $1 < a$, then Proposition $\ref{Proposition:Technical}$ implies that $(x,y) := \Phi_r(a-1,b-(r-1))$ is an imaginary root such that $q_{\Gamma_r}(x,y) + y - x \leq 0$. We claim that $(x,y) \not\in \underline{\EKP}(r)$. We consider different cases.\\
We assume first that $x \leq y$ and $y < \lfloor x L_r \rfloor$ holds, then Theorem $\ref{Theorem:NotMaximalRank1}$ yields an indecomposable representation not in $\EKP(r)$ with dimension vector $(x,y)$. \\
Now we assume that $x \leq y = \lfloor x L_r \rfloor$.  We have $ry - x = \Phi^{-1}_r(x,y)_1 = a-1$ and therefore $x \leq x(r-1) \leq ry-x = a-1$. By induction we find an indecomposable representation not in $\EKP(r)$ with dimension vector $(x,y)$.\\
If $x > y$, then every indecomposable representation with dimension vector $(x,y)$ is not in $\EKP(r)$.\\ It follows that $(x,y) \not \in \underline{\EKP}(r)$. 
Let $Y$ be an indecomposable regular representation with dimension vector $(x,y)$ such that $Y \notin \EKP(r)$. In view of Theorem $\ref{Theorem:Worch}$ we find $\alpha \in k^r \setminus \{0\}$ such that $0 \neq \Hom_{\Gamma_r}(X_\alpha,Y)$. Now the Auslander-Reiten formula yields $0 \neq \dim_k \Hom_{\Gamma_r}(X_\alpha,Y) = \dim_k \Ext^1_{\Gamma_r}(\tau^{-1}_{\Gamma_r} Y,X_\alpha)$. Hence we find a non-split exact sequence
\[ 0 \to X_\alpha \to E \to \tau^{-1}_{\Gamma_r} Y \to 0.\]
Since $\dimu \tau^{-1}_{\Gamma_r} Y = (a-1,b-(r-1))$, we conclude with Corollary $\ref{Corollary:ExistenceNonSplit}$ that $E$ is indecomposable with dimension vector $(a,b)$. By construction $0 \neq \Hom_{\Gamma_r}(X_\alpha,E)$ and therefore $E \not\in \EKP(r)$ by Theorem $\ref{Theorem:Worch}$.
\end{proof}

\begin{Theorem}\label{Theorem:B}
Let $r \geq 1$ and $\delta \in \Delta_+(\Gamma_r)$ be a positive root. The following statements are equivalent:
\begin{enumerate}
\item [$(i)$] $\delta \in \underline{\EKP}(r) \cup \underline{\EIP}(r)$.
\item [$(ii)$] $q_{\Gamma_r}(\delta) + |\delta_1 - \delta_2| \geq 1$.
\end{enumerate}
\end{Theorem}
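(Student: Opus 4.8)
The plan is to dispose of the ranks $r\le 2$ first and then, for $r\ge 3$, prove the two implications separately, drawing on the results already assembled in this section. For $r=1$ and $r=2$ there is nothing new to do: Corollary \ref{Corollary:Wiedemann}$(b),(c)$ identifies $\underline{\EKP}(r)\cup\underline{\EIP}(r)$ with $\{\delta\in\Delta_+(\Gamma_r)\mid q_{\Gamma_r}(\delta)+|\delta_1-\delta_2|\ge 1\}$, which is exactly the asserted equivalence. So I assume $r\ge 3$ henceforth.

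The implication $(ii)\Rightarrow(i)$ is Theorem \ref{Proposition:Wiedemann} read a little more closely. Its proof uses the duality $D_{\Gamma_r}$ to reduce to $\delta_1\le\delta_2$ and then shows that \emph{every} indecomposable $M$ with $\dimu M=\delta$ satisfies $M\in\EKP(r)$; this is precisely the statement $\delta\in\underline{\EKP}(r)$. Applying $D_{\Gamma_r}$, which exchanges $\EKP(r)$ and $\EIP(r)$ and reverses the two coordinates, covers the case $\delta_1\ge\delta_2$ and yields $\delta\in\underline{\EIP}(r)$ there. Hence $(ii)$ forces $\delta\in\underline{\EKP}(r)\cup\underline{\EIP}(r)$.

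For $(i)\Rightarrow(ii)$ I argue by contraposition, so suppose $q_{\Gamma_r}(\delta)+|\delta_1-\delta_2|\le 0$ and aim to show $\delta\notin\underline{\EKP}(r)\cup\underline{\EIP}(r)$. A real root has $q_{\Gamma_r}(\delta)=1$ and would force $q_{\Gamma_r}(\delta)+|\delta_1-\delta_2|\ge 1$; thus $\delta$ must be imaginary, and by $(\blacktriangledown)$ both coordinates are positive. Since swapping the two coordinates via $D_{\Gamma_r}$ preserves $q_{\Gamma_r}$, the quantity $|\delta_1-\delta_2|$, and membership in $\underline{\EKP}(r)\cup\underline{\EIP}(r)$, I may assume $\delta=(a,b)$ with $1\le a\le b$, so that $|\delta_1-\delta_2|=b-a$ and, by $(\blacktriangledown)$, $b<aL_r$; as $b$ is an integer this gives $b\le\lfloor aL_r\rfloor$. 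Two cases remain. If $b<\lfloor aL_r\rfloor$, then Theorem \ref{Theorem:NotMaximalRank1} directly produces an indecomposable of dimension vector $(a,b)$ lying outside $\EKP(r)\cup\EIP(r)$, whence $(a,b)\notin\underline{\EKP}(r)\cup\underline{\EIP}(r)$. If $b=\lfloor aL_r\rfloor$, the hypothesis reads $q_{\Gamma_r}(a,b)+b-a\le 0$, so Proposition \ref{Proposition:NotMaximalRank} gives $(a,b)\notin\underline{\EKP}(r)$; moreover, since $a\le b$ and $r\ge 3$, the equal images version of Westwick's Theorem \cite{We1} (which requires $\delta_1-\delta_2\ge r-1$) rules out any indecomposable of dimension vector $(a,b)$ having the equal images property, so $(a,b)\notin\underline{\EIP}(r)$ as well. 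In either case $\delta\notin\underline{\EKP}(r)\cup\underline{\EIP}(r)$, completing the contrapositive and hence the equivalence.

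All of the genuine content sits in the results being invoked: Theorem \ref{Proposition:Wiedemann} for one direction, and Theorem \ref{Theorem:NotMaximalRank1} together with Proposition \ref{Proposition:NotMaximalRank} for the other. The step I expect to be the real obstacle — and the one already carrying the weight of this section — is the boundary case $b=\lfloor aL_r\rfloor$ settled by Proposition \ref{Proposition:NotMaximalRank}, whose inductive proof is the technical heart of the matter. At the level of this theorem the remaining work is only the bookkeeping: reducing to imaginary roots with $a\le b$ via duality, splitting off the non-maximal case $b<\lfloor aL_r\rfloor$, and invoking Westwick's inequality to exclude the equal images property when $a\le b$.
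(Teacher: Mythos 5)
Your proposal is correct and follows essentially the same route as the paper: dispose of $r\le 2$ via Corollary \ref{Corollary:Wiedemann}, get $(ii)\Rightarrow(i)$ from Theorem \ref{Proposition:Wiedemann}, and get $(i)\Rightarrow(ii)$ by contraposition from Theorem \ref{Theorem:NotMaximalRank1} together with Proposition \ref{Proposition:NotMaximalRank}. You merely make explicit the bookkeeping the paper leaves implicit (the duality reduction to $a\le b$, the case split at $b=\lfloor aL_r\rfloor$, and the appeal to Westwick's inequality to exclude the equal images property), all of which is accurate.
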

\begin{proof}
For $r \in \{1,2\}$ the statement follows from Corollary $\ref{Corollary:Wiedemann}$. Now assume that $r \geq 3$, then Proposition $\ref{Proposition:NotMaximalRank}$ and Theorem $\ref{Theorem:NotMaximalRank1}$ imply that $(i) \implies (ii)$ holds and $(ii) \implies (i)$ follows from Theorem $\ref{Proposition:Wiedemann}$.
\end{proof}

\section{Applications}

\subsection{Elementary abelian groups}

\noindent Let $\Char(k) = p \geq 2$ and $E_r$ be a $p$-elementary abelian group of rank $r \in \NN$. Given $M \in \modd kE_r$ we define $m_i := \dim_k \Rad^i_{kE_r}(M)$ for all $i \in \{0,1,2\}$. As a consequence of Theorem $\ref{Theorem:B}$ we get:

\begin{corollary}
Let $M \in \modd kE_r$ and assume that $M/\Rad^2_{kE_r}(M)$ is indecomposable such that
\[q_{\Gamma_r}(m_0-m_1,m_1-m_2) + m_0 - 2m_1 + m_2 \geq 1.\] 
Then $M$ has the equal images property.
\end{corollary}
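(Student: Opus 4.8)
The plan is to reduce the statement about $kE_r$-modules of arbitrary Loewy length to a statement about representations of $\Gamma_r$ of Loewy length $\leq 2$, where Theorem \ref{Theorem:B} applies directly. The key observation is that the equal images property of a $kE_r$-module $M$ depends only on the action of the radical on the top three radical layers; more precisely, the nilpotent operators $x^M_\alpha$ interact with the radical filtration in such a way that the relevant surjectivity conditions are controlled by the layer $M/\Rad^2_{kE_r}(M)$ together with the adjacent quotient. First I would consider the module $\bar M := M/\Rad^2_{kE_r}(M)$, which has Loewy length $\leq 2$, and view it through the stable equivalence between $\modd_{\leq 2} kE_r$ and $\rep(\Gamma_r)$ described in the introduction. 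Under this equivalence $\bar M$ corresponds to a representation $N \in \rep(\Gamma_r)$, and since $\bar M$ is assumed indecomposable, $N$ is indecomposable as well.

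Next I would compute the dimension vector of $N$ in terms of the radical layers of $M$. The module $\bar M$ has its top $\bar M / \Rad(\bar M) \cong M/\Rad_{kE_r}(M)$ of dimension $m_0 - m_1$ sitting at vertex $1$, and its socle layer $\Rad(\bar M) \cong \Rad_{kE_r}(M)/\Rad^2_{kE_r}(M)$ of dimension $m_1 - m_2$ sitting at vertex $2$. Hence $\dimu N = (m_0 - m_1,\, m_1 - m_2) =: \delta$, and by Kac's Theorem this is a positive root of $\Gamma_r$. Substituting into the hypothesis, the quantity
\[
q_{\Gamma_r}(m_0 - m_1,\, m_1 - m_2) + m_0 - 2m_1 + m_2 = q_{\Gamma_r}(\delta) + (m_0 - m_1) - (m_1 - m_2) = q_{\Gamma_r}(\delta) + \delta_1 - \delta_2
\]
is assumed to be $\geq 1$. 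Since $|\delta_1 - \delta_2| \geq \delta_1 - \delta_2$, this forces $q_{\Gamma_r}(\delta) + |\delta_1 - \delta_2| \geq 1$ as well, so Theorem \ref{Theorem:B} yields $\delta \in \underline{\EKP}(r) \cup \underline{\EIP}(r)$. In fact, because the hypothesis is stated with $\delta_1 - \delta_2$ rather than its absolute value, I expect the sign to pin down which of the two properties holds: the relevant indecomposable $N$ lies in $\EIP(r)$, and the equal images property of $N$ translates back to the equal images property of $M$.

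The main obstacle will be justifying the last translation step carefully, namely that the equal images property of the Loewy length $\leq 2$ quotient $N$ implies the equal images property of the full module $M$, even though $M$ may have larger Loewy length. This requires showing that surjectivity of $x^M_\alpha$ on all of $M$ is equivalent to surjectivity of the induced map on the top two radical layers, i.e. that $x_\alpha M = \Rad_{kE_r}(M)$ holds for every $\alpha$ precisely when the corresponding $N^\alpha$ is surjective for every $\alpha$. One direction follows from the stable equivalence identifying $\EIP$-representations with equal-images modules in $\modd_{\leq 2} kE_r$; the other uses that the radical of $kE_r$ is generated by the $x_i$, so that $x_\alpha$ acting on deeper layers is controlled once surjectivity onto $\Rad_{kE_r}(M)/\Rad^2_{kE_r}(M)$ is known. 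I would invoke the properties of the stable equivalence recorded in the introduction and in \cite{CFP2}, \cite{CFS1} to handle this reduction cleanly, so that the corollary becomes a direct consequence of Theorem \ref{Theorem:B} applied to $\delta = (m_0 - m_1, m_1 - m_2)$.
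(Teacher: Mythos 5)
Your proposal is correct and follows essentially the same route as the paper: pass to $M/\Rad^2_{kE_r}(M)$, identify it with an indecomposable representation $N \in \rep(\Gamma_r)$ of dimension vector $(m_0-m_1,m_1-m_2)$, and apply Theorem \ref{Theorem:B} to conclude $N \in \EIP(r)$. The reduction step you flag as the main obstacle is exactly what the paper disposes of by citing \cite[5.6.4]{Be1} (together with \cite[2.3]{Wor1} for transporting the equal images property across the correspondence), and your sketch of that reduction --- surjectivity onto $\Rad(M)/\Rad^2(M)$ propagates to all of $\Rad(M)$ because the $x_i$ commute and the radical is nilpotent --- is a valid way to prove the cited fact.
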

\begin{proof}
In view of \cite[5.6.4]{Be1} it sufficies to show that $M/\Rad^2_{kE_r}(M)$ has the equal images property. Since $p \geq 2$, the indecomposable module $M/\Rad^2_{kE_r}(M)$ of Loewy length $\leq 2$ corresponds to an indecomposable representation $N \in \rep(\Gamma_r)$ with dimension vector $(m_0-m_1,m_1-m_2)$. By assumption and Theorem $\ref{Theorem:B}$ we know that $N \in \EIP(r)$. Hence $M/\Rad^2_{kE_r}(M)$ has the equal images property by \cite[2.3]{Wor1}. 
\end{proof}

\subsection{Regular components for wild Kronecker quivers}\label{Section:Elementary}
\noindent We assume throughout that $r \geq 3$, i.e. $\rep(\Gamma_r)$ is a wild category. 
 
\begin{Definition}
A non-zero regular representation $E \in \rep(\Gamma_r)$ is called \textsf{elementary}, if there is no short exact sequence $0 \to A \to E \to B \to 0$ with $A$ and $B$ regular non-zero. We denote the set of all elementary representations by $\El(r) \subseteq \rep(\Gamma_r)$ and set $\cX := \El(r) \cap \EKP(r)$. We let $\cE(\cX)$ be the class of all regular representations that have an $\cX$-filtration.
\end{Definition}

\noindent Note that each elementary representation is, by definition, indecomposable  and each regular representation $M$ has a $\El(r)$-filtration. Usually such a filtration is not uniquely determined and not much is known about the elementary representations that appear as filtrations factors for $M$. If $\dimu M = (a,b)$ satisfies $1 \leq a \leq b = \lfloor a L_r \rfloor$, we get the following restrictions: 

\begin{corollary}\label{Corollary:FiltrationElementary}
Let $M$ be an indecomposable regular representation with $\dimu M = (a,b)$ such that $a \leq b = \lfloor a L_r \rfloor$. Let $0 = M_0 \subset M_{1} \subset \cdots  \subset M_{n-1} \subset M_n = M$ be a filtration such that each $M_i/M_{i-1}$ is an elementary representation.  The following statements hold.
\begin{enumerate}
\item[$(1)$] There is at most one $i \in \{1,\ldots,n\}$ such that $E := M_i/M_{i-1}$ does not have the equal kernels property. In this case there exists $\alpha \in k^r \setminus \{0\}$ such that  $E \cong X_\alpha$.
\item[$(2)$] For all $l \in \NN$ and each filtration $0 = N_1 \subset N_2 \subset \cdots N_{m-1} \subset N_m = \tau^{-l}_{\Gamma_r} M$ with elementary filtration factors we have $N_i/N_{i-1} \in \cX$ for all $i \in \{1,\ldots,m\}$.
\item[$(3)$] For all $l \in \NN$ we have $\tau^{-l}_{\Gamma_r} M \in \cE(\cX)$.

\end{enumerate} 
\end{corollary}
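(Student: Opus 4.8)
The plan is to establish the three claims in sequence, using Proposition~\ref{Proposition:Filtration} and Proposition~\ref{Proposition:NotMaximalRank} as the main engines. For part $(1)$, I first observe that each elementary representation is in particular regular indecomposable, so the given filtration is exactly of the form treated in Proposition~\ref{Proposition:Filtration}. That proposition already yields that at most one filtration factor $M_l/M_{l-1}$ satisfies $q_{\Gamma_r}(a_l,b_l)+b_l-a_l < 1$; and by Proposition~\ref{Proposition:Wiedemann} every factor with $q_{\Gamma_r}(a_i,b_i)+b_i-a_i \geq 1$ lies in $\EKP(r)$. Hence at most one factor can fail the equal kernels property, proving the first assertion of $(1)$. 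For the identification $E \cong X_\alpha$, I would argue that the exceptional factor $E$ is elementary, not in $\EKP(r)$, with dimension vector $(a_l,b_l)$ satisfying $a_l \leq b_l = \lfloor a_l L_r\rfloor$ and $q_{\Gamma_r}(a_l,b_l)+b_l-a_l \leq 0$. The key point will be that such an $E$ must have $\dimu E = (1,r-1)$: a larger imaginary root with $q + |b-a| \leq 0$ would, via Proposition~\ref{Proposition:Technical}, admit a non-split extension $0 \to X_\alpha \to E' \to \tau^{-1}E \to 0$ realizing it as a proper regular extension, contradicting elementarity. Once $\dimu E = (1,r-1)$, Theorem~\ref{Theorem:Worch}$(3)$ forces $E \cong X_\alpha$ for some $\alpha$.

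For part $(2)$, I would exploit that $\tau^{-1}_{\Gamma_r}$ is an auto-equivalence preserving regularity and the class $\El(r)$ of elementary representations, so that the dimension vector $\dimu \tau^{-l}_{\Gamma_r} M = \Phi_r^{-l}(a,b)$ pushes us \emph{away} from the critical line. The essential input is that $M$ already satisfies $a \leq b = \lfloor a L_r\rfloor$, which by Proposition~\ref{Proposition:NotMaximalRank} is the extremal case; applying $\tau^{-l}_{\Gamma_r}$ moves the dimension vector strictly into the region where $q_{\Gamma_r}+|b-a| \geq 1$, so that \emph{every} elementary factor of $\tau^{-l}_{\Gamma_r}M$ now satisfies $q_{\Gamma_r}(a_i,b_i)+b_i-a_i \geq 1$ and hence lies in $\EKP(r)$ by Proposition~\ref{Proposition:Wiedemann}. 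Combined with the definition $\cX = \El(r) \cap \EKP(r)$, this gives $N_i/N_{i-1} \in \cX$ for every factor. The arithmetic here — verifying that $\Phi_r^{-l}$ indeed carries the extremal dimension vector and all its sub-decompositions into the strictly-positive region — will require Lemma~\ref{Lemma:Average} applied to the decomposition of $\Phi_r^{-l}(a,b)$, together with the explicit form of $\Phi_r^{-1}$.

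Part $(3)$ is then nearly immediate: any regular representation has an $\El(r)$-filtration, and applying part $(2)$ to such a filtration of $\tau^{-l}_{\Gamma_r}M$ shows every factor lies in $\cX$, which is precisely the statement that $\tau^{-l}_{\Gamma_r}M \in \cE(\cX)$.

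The step I expect to be the main obstacle is the identification $E \cong X_\alpha$ in part $(1)$, more precisely showing that the unique exceptional elementary factor is forced to have dimension vector $(1,r-1)$. Proposition~\ref{Proposition:Filtration} only bounds the \emph{number} of bad factors, not their dimension vectors, so I must separately rule out that a bad elementary factor could have a larger imaginary dimension vector. The natural route is to invoke the elementarity hypothesis against the non-split extension produced in the inductive proof of Proposition~\ref{Proposition:NotMaximalRank} (via Corollary~\ref{Corollary:ExistenceNonSplit}), which realizes any imaginary root $(a',b')$ with $q_{\Gamma_r}(a',b')+b'-a' \leq 0$ and $a' > 1$ as a genuine regular extension; an elementary representation admits no such decomposition, leaving only the base case $(1,r-1)$. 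I would need to check that the regular middle term in that extension is itself non-elementary, which follows because both ends are non-zero regular representations.
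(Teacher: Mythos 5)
Your handling of the first assertion of $(1)$ and of part $(3)$ matches the paper, but both of the steps you yourself flag as delicate contain genuine gaps. For the identification $E \cong X_\alpha$ in $(1)$: Corollary \ref{Corollary:ExistenceNonSplit} (and the induction behind Proposition \ref{Proposition:NotMaximalRank}) produces \emph{one} indecomposable representation with the given bad dimension vector that arises as a non-split extension of non-zero regular representations; it does not show that \emph{every} indecomposable representation with that dimension vector is such an extension. A bad dimension vector $(a_l,b_l)$ with $q_{\Gamma_r}(a_l,b_l)+b_l-a_l\le 0$ is an imaginary root, so by Theorem \ref{Theorem:Kac}$(iv)$ there are infinitely many pairwise non-isomorphic indecomposables with this dimension vector, and your exceptional elementary factor $E$ need not be isomorphic to the middle term you construct; its elementarity is therefore not contradicted. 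The paper closes this step with a different input, namely the bound on dimension vectors of elementary representations from \cite[2.5]{Bi3}: together with Lemma \ref{Lemma:Average} (which forces $b_l=\lfloor a_l L_r\rfloor$) and $r-1<L_r<r$ from $(\star)$ this pins down $(a_l,b_l)=(1,r-1)$, and only then does Theorem \ref{Theorem:Worch}$(3)$ give $E\cong X_\alpha$.

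In part $(2)$ your key inference --- that since $\Phi_r^{-l}(a,b)$ lies in the region $q_{\Gamma_r}+|\cdot|\ge 1$, \emph{every} elementary factor of $\tau^{-l}_{\Gamma_r}M$ satisfies $q_{\Gamma_r}(a_i,b_i)+b_i-a_i\ge 1$ --- is a non sequitur: the position of the total dimension vector does not control the factors. (Compare the paper's example with $r=3$: the vector $(14,36)$ has $q_{\Gamma_3}(14,36)+22=2\ge 1$, yet admits an elementary filtration with a factor of dimension vector $(1,2)$, for which $q_{\Gamma_3}(1,2)+1=0$.) Lemma \ref{Lemma:Average} only yields that each factor of a filtration of $\tau^{-l}_{\Gamma_r}M$ is again extremal, and Proposition \ref{Proposition:Filtration} still permits one bad factor, so something more is needed to exclude it. The paper's argument is different and avoids the issue entirely: apply $\tau^{l}_{\Gamma_r}$ to the given filtration to obtain an $\El(r)$-filtration of $M$ itself, invoke part $(1)$ there, and then use that $\EKP(r)$ is closed under $\tau^{-1}_{\Gamma_r}$ together with $\dimu\tau^{-1}_{\Gamma_r}X_\alpha\in\underline{\EKP}(r)$ to conclude that every factor of the original filtration of $\tau^{-l}_{\Gamma_r}M$ lies in $\cX$. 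You should adopt this pull-back argument (or supply an equivalent one) rather than reasoning from the total dimension vector.
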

\begin{proof}
Assume that there is some $i \in \{1,\ldots,n\}$ such that $E := M_i/M_{i-1}$ is not in $\EKP(r)$. Then $\ref{Proposition:Filtration}$ implies that $i$ is unique with this property. Let $\dimu E = (u,v)$, then $u \leq v = \lfloor u L_r \rfloor$ by $\ref{Lemma:Average}$. We conclude with \cite[2.5]{Bi3} that $1 \leq u \leq v \leq r - 1$. Recall from $(\star)$ that $r > L_r > r - 1$. Hence $1 = u$ and $v = r - 1$. Now Theorem $\ref{Theorem:Worch}$ implies that $E \cong X_\alpha$ for some $\alpha \in k^r -0$. This proves $(1)$. For $(2)$ just note that every $\El(r)$-filtration of $\tau^{-l}_{\Gamma_r} M$ induces an $\El(r)$-filtration of $M$ by applying $\tau^l_{\Gamma_r}$ to the filtration. Since $\EKP(r)$ is closed under $\tau^{-1}_{\Gamma_r}$ $($see \cite[2.7]{Wor1}$)$ and $\dimu \tau^{-1}_{\Gamma_r} X_\alpha \in \underline{\EKP}(r)$ the claim follows. Now $(3)$ is just a special case of $(2)$.
\end{proof}

\begin{examples}
In the following we consider $r= 3$.
\begin{enumerate}
\item[(i)] Consider an indecomposable representation $N$ with  dimension vector $(3,6)$  and the equal kernels property $($see \cite[4.1]{Bi3}$)$. Let $E$ be an elementary representation with dimension vector $(a,b)$ and $a \leq 3, b \leq 6$. Westwick's Theorem and \cite[Theorem 1]{Ri10} yield $(a,b) \in \{(2,4),(2,5)\}$. Hence $N$ does not have an $\cX$-filtration, although $N \in \EKP(3)$.
\item[(ii)] Consider the indecomposable representation $M \in \rep(C_3)$:

\[ \xymatrix{
k& &k & &k & &k \ar[ld]^{[1 0]}  & \\
k& k \ar[lu] \ar[l] \ar[ur] & & k \ar[lu] \ar[ru] \ar^{[1 1]}[rr]  & & k^2 & k. \ar[l]^{[0 1]}& 
}
\]

\noindent There is a short exact sequence $0 \to X \to M \to Y \to 0$ in $\rep(C_3)$ such that $X, Y$ are indecomposable with $\dimu \pi_{\lambda}(X) = (2,5)$ and $\dimu \pi_\lambda(Y) = (2,1)$. Since every indecomposable representation with dimension vector $(2,5)$ or $(2,1)$ is elementary this yields a filtration of $\pi_\lambda(M)$ by elementary $\Gamma_3$-representations. We apply $\tau^{-1}_{\Gamma_3}$ and get a filtration of $\tau^{-1}_{\Gamma_3} \pi_\lambda(M)$ by elementary representations with dimension vectors $(13,34)$ and $(1,2)$. Note that $q_{\Gamma_3}(14,36) + |14-36| = -20+22 = 2 \geq 1$. We conclude with Corollary  $\ref{Corollary:DimensionVectorIsMaximal}$ that $(a,b) := \dimu \tau^{-1}_{\Gamma_3} \pi_\lambda(M) = (14,36)$ satisfies $a \leq b = \lfloor a L_3 \rfloor$. Hence not every filtration of $\pi_\lambda(M)$ is an $\cX$-filtration.
\item[(iii)] Let $M$ be an indecomposable representation with dimension vector $(18,47)$. Since $(a,b) := \dimu \tau_{\Gamma_3} M = (3,7)$ and $7 = \lfloor 3 L_3 \rfloor$, every filtration of $M$ with elementary filtration factors is an $\cX$-filtration by $\ref{Corollary:FiltrationElementary}(2)$.

\end{enumerate} 
\end{examples}

\begin{Lemma}\label{Lemma:Orbit} Let $(a,b) \in \Delta^{\im}_+(\Gamma_r)$. The following statements hold.
\begin{enumerate}
\item If $a \geq b$, then $\Phi_r(a,b)_1 - \Phi_r(a,b)_2 \geq a - b + (r^2-2r)a > a - b \geq 0$.
\item If $a \leq b$, then $\Phi_r^{-1}(a,b)_2 - \Phi_r^{-1}(a,b)_1 \geq b -a + (r^2-2r) b > b - a \geq 0$.
\end{enumerate}
\end{Lemma}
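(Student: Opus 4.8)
Looking at the final statement, Lemma~\ref{Lemma:Orbit}, I need to prove two inequalities about how the Coxeter matrix $\Phi_r$ (and its inverse) transform the difference of coordinates for an imaginary root. Since the two parts are completely symmetric under the duality $(a,b) \mapsto (b,a)$ combined with the relation between $\Phi_r$ and $\Phi_r^{-1}$, I expect to only need to carry out the computation for one part in detail.

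The plan is to proceed by direct computation using the explicit matrices $\Phi_r = \begin{pmatrix} r^2-1 & -r \\ r & -1 \end{pmatrix}$ and $\Phi_r^{-1} = \begin{pmatrix} -1 & r \\ -r & r^2-1 \end{pmatrix}$ recorded earlier in the excerpt. For part $(a)$, with $a \geq b$, I would compute $\Phi_r(a,b)_1 = (r^2-1)a - rb$ and $\Phi_r(a,b)_2 = ra - b$, so that
\[ \Phi_r(a,b)_1 - \Phi_r(a,b)_2 = (r^2-1)a - rb - ra + b = (r^2 - r - 1)a - (r-1)b. \]
The goal inequality $\Phi_r(a,b)_1 - \Phi_r(a,b)_2 \geq (a-b) + (r^2-2r)a$ then rearranges to checking that $(r^2-r-1)a - (r-1)b \geq (r^2-2r+1)a - b$, i.e. that $(r-1)a \geq (r-2)b$, equivalently $\frac{b}{a} \leq \frac{r-1}{r-2}$.

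The key step is to verify this ratio bound from the hypothesis that $(a,b)$ is an imaginary root. By the boxed description $(\blacktriangledown)$ of $\Delta^{\im}_+(\Gamma_r)$ together with $(\star)$, any imaginary root satisfies $\frac{b}{a} < L_r < r$, and since $a \geq b$ we in fact have $\frac{b}{a} \leq 1 \leq \frac{r-1}{r-2}$ for all $r \geq 3$; this immediately gives $(r-1)a \geq (r-2)b$ and hence the first (non-strict) inequality. The strict inequality $(r^2-2r)a > 0$ in the middle holds because $a \geq 1$ and $r \geq 3$ forces $r^2 - 2r = r(r-2) > 0$, and the final inequality $a - b \geq 0$ is just the hypothesis. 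For part $(b)$ I would apply the duality: writing $D$ for the coordinate swap $(x,y) \mapsto (y,x)$, one checks from the explicit matrices that $D \Phi_r^{-1} D = \Phi_r$, so that $\Phi_r^{-1}(a,b)_2 - \Phi_r^{-1}(a,b)_1 = \Phi_r(b,a)_1 - \Phi_r(b,a)_2$, and the hypothesis $a \leq b$ becomes $b \geq a$; applying part $(a)$ to the imaginary root $(b,a)$ yields exactly the claimed bound.

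The main obstacle, such as it is, will be bookkeeping: confirming the symmetry relation $D\Phi_r^{-1}D = \Phi_r$ so that part $(b)$ genuinely reduces to part $(a)$ without recomputing, and making sure the chain of inequalities is threaded correctly (the non-strict step from the root condition, the strict step from $r(r-2)a > 0$, and the final trivial step). None of these steps is deep; the content is really just that $\Phi_r$ expands the coordinate gap of an imaginary root by a controlled positive amount, and the proof is a short sequence of elementary estimates once the ratio bound $b/a \leq (r-1)/(r-2)$ is observed.
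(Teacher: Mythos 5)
Your proposal is correct and takes essentially the same route as the paper: a direct computation with the explicit matrix $\Phi_r$ for part (a), and reduction of part (b) to part (a) via the coordinate swap (the paper simply says ``by duality''). One minor arithmetic slip: the goal inequality in (a) actually reduces to $(r-2)a \geq (r-2)b$, i.e.\ exactly the hypothesis $a \geq b$, rather than to $(r-1)a \geq (r-2)b$ --- but since your verification proceeds from $b/a \leq 1$, this does not affect the validity of the argument.
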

\begin{proof}
\begin{enumerate}
\item We have $\Phi_r(a,b)_1 - \Phi_r(a,b)_2 + b - a= a(r^2-r-2) - b(r-2) \geq a(r^2-2r)$. Since $r \geq 3$ and $a \geq 1$, we get $a(r^2-2r) > 0$.
\item The follows by duality.
\end{enumerate}
\end{proof}

\noindent Recall from \cite[XIII.1.1]{Assem3} that for each $\delta \in \Delta^{\im}_+(\Gamma_r)$ with Coxeter orbit $\cO := \langle \Phi_r \rangle \delta$, the map 
\[ \varphi_\delta \colon \ZZ \to \cO, l \mapsto \Phi^{l}_r \delta \]
is a bijection. Thanks to Theorem $\ref{Theorem:B}$ we have:

\begin{corollary}\label{Corollary:Coxeterorbits}
Let $\cO$ be the Coxeter orbit of an imaginary root. There exist uniquely determined elements $\delta_{\cO} \in \cO$ and $m_{\cO} \in \NN_0$ such that 
\begin{enumerate}
\item[$(i)$] $\underline{\EIP}(r) \cap \cO = \{ \Phi^l_r \delta_{\cO} \mid l \in \NN\}$ and
\item [$(ii)$] $\underline{\EKP}(r) \cap \cO = \{ \Phi^{-l}_r \delta_{\cO} \mid l \geq m_\cO\}$.
\end{enumerate} 
\end{corollary}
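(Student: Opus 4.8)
The plan is to combine the characterization from Theorem~\ref{Theorem:B} with the monotonicity of the Coxeter action along a $\tau$-orbit recorded in Lemma~\ref{Lemma:Orbit}. Fix the Coxeter orbit $\cO = \langle \Phi_r \rangle \delta$ of an imaginary root and use the bijection $\varphi_\delta \colon \ZZ \to \cO$, $l \mapsto \Phi_r^l \delta$, so that every element of $\cO$ is uniquely $\Phi_r^l \delta$ for some $l \in \ZZ$. Since $\underline{\EKP}(r) \cap \underline{\EIP}(r) = \emptyset$ by Corollary~\ref{Corollary:Wiedemann}(d), the two sets to be described are disjoint, and by Theorem~\ref{Theorem:B} membership in $\underline{\EKP}(r) \cup \underline{\EIP}(r)$ is governed by the single scalar quantity $f(\gamma) := q_{\Gamma_r}(\gamma) + |\gamma_1 - \gamma_2| - 1 \geq 0$. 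Because $q_{\Gamma_r}$ is $\Phi_r$-invariant, the only varying part along the orbit is $|\gamma_1 - \gamma_2|$.

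The first step is to track the sign of $\gamma_2 - \gamma_1$ along the orbit. I would argue that as $l$ increases, $\Phi_r^l \delta$ eventually has first coordinate exceeding the second and stays that way (an ``EIP region''), while as $l$ decreases the second coordinate eventually dominates (an ``EKP region''). Concretely, Lemma~\ref{Lemma:Orbit}(a) shows that once $\gamma_1 \geq \gamma_2$ we have $(\Phi_r \gamma)_1 - (\Phi_r \gamma)_2 > \gamma_1 - \gamma_2 \geq 0$, so the gap $\gamma_1 - \gamma_2$ is strictly increasing on the region where $\gamma_1 \geq \gamma_2$; applying $\Phi_r$ repeatedly, $|\gamma_1 - \gamma_2|$ tends to $+\infty$. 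Dually, Lemma~\ref{Lemma:Orbit}(b) shows $\gamma_2 - \gamma_1$ strictly increases under $\Phi_r^{-1}$ on the region $\gamma_1 \leq \gamma_2$. Since $q_{\Gamma_r}$ is constant on $\cO$, this forces $f(\Phi_r^l \delta) \to +\infty$ as $l \to +\infty$ and also as $l \to -\infty$, so only finitely many $l$ can fail the inequality $f \geq 0$; among those that satisfy it, the large-$l$ ones have $\gamma_1 > \gamma_2$ and the small-$l$ ones have $\gamma_1 < \gamma_2$.

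The second step is to separate the EIP side from the EKP side. Writing $\gamma = \Phi_r^l \delta$, membership in $\underline{\EIP}(r)$ requires (by Westwick's Theorem, applied as in the proof of Corollary~\ref{Corollary:Wiedemann}(c)) $\gamma_1 \geq \gamma_2$, and membership in $\underline{\EKP}(r)$ requires $\gamma_2 \geq \gamma_1$; the imaginary-root case excludes $\gamma_1 = \gamma_2$ except possibly for $r = 2$, which is not in force here since $r \geq 3$. I would then use the monotonicity just established to show each side is an ``up-set'' in the appropriate direction: if $\Phi_r^l \delta \in \underline{\EIP}(r)$ then $f(\Phi_r^{l+1}\delta) \geq f(\Phi_r^l \delta) \geq 0$ and $(\Phi_r^{l+1}\delta)_1 > (\Phi_r^{l+1}\delta)_2$, so $\Phi_r^{l+1}\delta \in \underline{\EIP}(r)$ as well; and symmetrically $\underline{\EKP}(r) \cap \cO$ is closed under $\Phi_r^{-1}$. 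Hence each intersection is an interval of the form $\{l \geq l_0\}$ (for EIP) and $\{l \leq l_1\}$ (for EKP) inside $\ZZ \cong \cO$.

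Finally I would fix the normalization. Define $\delta_\cO := \Phi_r^{-l_1}\delta$ to be the largest element (smallest $l$) still in $\underline{\EIP}(r) \cap \cO$, equivalently the boundary element of the EIP up-set; then $(i)$ reads $\underline{\EIP}(r) \cap \cO = \{\Phi_r^l \delta_\cO \mid l \in \NN\}$ after reindexing. Setting $m_\cO$ to be the gap between this boundary and the start of the EKP down-set gives $(ii)$, $\underline{\EKP}(r) \cap \cO = \{\Phi_r^{-l}\delta_\cO \mid l \geq m_\cO\}$; uniqueness of $\delta_\cO$ and $m_\cO$ follows from the bijectivity of $\varphi_\delta$ together with disjointness of the two sets. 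The main obstacle I anticipate is the bookkeeping needed to pin down the exact boundary indices and to verify the indexing conventions in $(i)$ and $(ii)$ (in particular confirming that $m_\cO \in \NN_0$ accurately measures the ``dead zone'' of roots between the EKP and EIP regions where $f < 0$); the structural content—two opposite up-sets with finitely many excluded roots in between—is already forced by Theorem~\ref{Theorem:B} and Lemma~\ref{Lemma:Orbit}.
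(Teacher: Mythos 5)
Your proposal is correct and takes essentially the same route as the paper: Theorem~\ref{Theorem:B} combined with the $\Phi_r$-invariance of $q_{\Gamma_r}$ and the strict monotonicity of $|\gamma_1-\gamma_2|$ along the orbit from Lemma~\ref{Lemma:Orbit}, with $\delta_{\cO}$ and $m_{\cO}$ read off as the boundary indices of the resulting EIP up-set and EKP down-set. The paper's proof is precisely this bookkeeping, choosing $(a,b)\in\cO$ with $a\geq b$ and taking the minimal $n$ and $l$ with $q_{\Gamma_r}(a,b)+d_n\geq 1$ and $q_{\Gamma_r}(a,b)+b_l\geq 1$.
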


\begin{proof}
By Lemma $\ref{Lemma:Orbit}$ we find $(a,b) \in \cO$ such that $a \geq b$, $d_l := \Phi^{l}_r (a,b)_1 - \Phi^{l}_r (a,b)_2 \geq 0$ and $b_{l} := \Phi^{-l}_r (a,b)_2 - \Phi^{-l}_r (a,b)_1 \geq  0$ for all $l \in \NN$. Lemma $\ref{Lemma:Orbit}$ also implies that $(d_n)_{n \in \NN_0}$ and $(b_n)_{n \in \NN}$ are strictly increasing sequences. Since $q_{\Gamma_r}(-)$ is constant on $\cO$ we find $n \in \NN_0$ and $l \in \NN$ minimal such that $q_{\Gamma_r}(a,b)+ d_n \geq 1$ and $q_{\Gamma_r}(a,b)+ b_l \geq 1$. We set $\delta_{\cO} := \Phi^{n-1}_r (a,b)$ and $m_{\cO} := l$.
\end{proof}

\begin{Remark}
The existence of $m_\cO$ can also be proved by means of Theorem $\ref{Theorem:Worch}$ and \cite[4.6]{Ker3}. Our proof, however, provides an algorithm to compute $m_{\cO}$. 
\noindent As an application we get computable bounds for the invariants $\rk(\cC)$, $\cW(\cC)$, introduced in \cite{Ker1} and \cite{Wor1}, attached to a regular component $\cC$ of the Auslander-Reiten quiver of $\Gamma_r$. It follows from \cite[1.7]{Ker1} and \cite[3.4]{Wor1} that $-1 \leq -\rk(\cC) \leq \cW(\cC)$.
\end{Remark}

\begin{corollary}
Let $r \geq 3$ and $\cC$ be a regular component of the Auslander-Reiten quiver of $\cC$. Let $M \in \cC$ be an indecomposable representation of quasi-length $\ql(M) \in \NN$. Let $\cO_M$ be the Coxeter orbit of $\dimu M$, then $\cW(\cC) \leq m_{\cO_M} - \ql(M) + 1$.
\end{corollary}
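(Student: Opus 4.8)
The plan is to express the component invariant $\cW(\cC)$ through the numerical data of Corollary~\ref{Corollary:Coxeterorbits}, exploiting the fact that all indecomposables of a fixed quasi-length in $\cC$ share a single Coxeter orbit. Concretely, fix a quasi-simple representation $X$ in $\cC$ and write $M = \tau^{-i}_{\Gamma_r} X[q]$ with $q = \ql(M)$. Since $\cC$ is of type $\ZZ\mathbb{A}_\infty$, the ray filtration gives $\dimu X[q] = \sum_{j=0}^{q-1}\Phi_r^{-j}\dimu X$, and applying $\tau^{-1}_{\Gamma_r}$ multiplies dimension vectors by $\Phi_r^{-1}$; hence $\dimu M = \Phi_r^{-i}\dimu X[q]$ lies in an orbit $\cO_M$ that depends only on $q$ and on $\cC$. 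Thus Corollary~\ref{Corollary:Coxeterorbits} applies to $\cO_M$ and produces $\delta_{\cO_M}$ and the threshold $m_{\cO_M}$ with $\underline{\EKP}(r)\cap\cO_M = \{\Phi_r^{-l}\delta_{\cO_M}\mid l\ge m_{\cO_M}\}$.

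Next I would pass from dimension vectors back to representations. Writing $\dimu M = \Phi_r^{-s}\delta_{\cO_M}$, Theorem~\ref{Theorem:B} together with Theorem~\ref{Theorem:Worch} shows that as soon as $s + i' \ge m_{\cO_M}$ the translate satisfies $\dimu \tau^{-i'}_{\Gamma_r}M\in\underline{\EKP}(r)$, so $\tau^{-i'}_{\Gamma_r}M\in\EKP(r)$; by closure of $\EKP(r)$ under subrepresentations (immediate from Theorem~\ref{Theorem:Worch}) the same holds for every representation lying on the ray below it. In this way I locate, for each quasi-length $q$, the first translate of $\cC$ entering the equal-kernels region, and hence the apex of the equal-kernels cone of $\cC$, which is governed by the distinguished quasi-simple $X_\cC$ and its successor cone appearing in Corollary~\ref{Corollary:FiltrationElementary}. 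The number of $\tau^{-1}_{\Gamma_r}$-steps required is $m_{\cO_M}-s$, and the correction $-\ql(M)+1$ accounts for the $q-1$ intermediate quasi-lengths between $M$ and the mouth of $\cC$ where $X_\cC$ sits.

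The last step is to reconcile this offset with the definition of $\cW(\cC)$ in \cite[3.4]{Wor1}. This is the delicate point: $\cW(\cC)$ is read off from the global structure of $\cC$ (via elementary filtrations and the quasi-simple $X_\cC$), whereas $m_{\cO_M}$ is attached to the single orbit $\cO_M$ determined by the quasi-length $q$, so I must track how the two Coxeter orbits — the orbit of the quasi-simple dimension vector and the orbit $\cO_M$ of $\dimu M$ — are related under $\dimu X[q]=\sum_{j}\Phi_r^{-j}\dimu X$. I expect the main obstacle to be the monotonicity in $q$, namely that raising the quasi-length by one lowers $m_{\cO_M}$ by at most one; this should follow from Lemma~\ref{Lemma:Orbit} applied to the orbits $\cO_M$, and it is exactly what guarantees that the inequality $\cW(\cC)\le m_{\cO_M}-\ql(M)+1$ holds for every $M\in\cC$ and becomes sharp for the optimal choice of quasi-length.
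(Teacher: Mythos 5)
Your proposal assembles several correct ingredients (a fixed quasi-length level of $\cC$ carries a single Coxeter orbit of dimension vectors; membership of $\dimu N$ in $\underline{\EKP}(r)$ forces $N \in \EKP(r)$), but the decisive step is missing. The paper's proof rests on two facts: first, if $d(l)$ denotes the distance between the cones $\EIP(r)\cap\cC$ and $\EKP(r)\cap\cC$ at quasi-length $l$ (in the sense of \cite[3.1]{Wor1}), then $d(l) = \cW(\cC) + (l-1)$ for \emph{every} $l$, directly from the definition of $\cW(\cC)$; second, $d(\ql(M)) \leq m_{\cO_M}$, because by Corollary \ref{Corollary:Coxeterorbits} the dimension vectors $\Phi_r^{l}\delta_{\cO_M}$ ($l\geq 1$) already lie in $\underline{\EIP}(r)$ and the $\Phi_r^{-l}\delta_{\cO_M}$ ($l\geq m_{\cO_M}$) in $\underline{\EKP}(r)$, so the actual cones at that level can only be wider than the dimension-vector criterion predicts. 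The inequality is then immediate for the single module $M$ at hand. You never invoke the linear relation $d(l)=\cW(\cC)+(l-1)$; instead you propose to compare $m_{\cO_M}$ across different quasi-lengths via a monotonicity statement (``raising the quasi-length by one lowers $m_{\cO_M}$ by at most one'') which you explicitly leave unproven and which is not obviously a consequence of Lemma \ref{Lemma:Orbit}. That comparison is both the hard part of your route and unnecessary: the corollary concerns one fixed quasi-length, and the level-to-level bookkeeping is already packaged into the definition of $\cW(\cC)$.

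Two further points need repair. You identify the apex of the equal-kernels cone of $\cC$ with the quasi-simple $X_\cC$ governing $\cX$-filtrations; these are different objects in general --- the paper's Example (i) in Section 5.2 exhibits a representation in $\EKP(3)$ admitting no $\cX$-filtration, so the $\cX$-filtration cone is strictly smaller than $\EKP(r)\cap\cC$. Also, while $\EKP(r)$ is indeed closed under subrepresentations by Theorem \ref{Theorem:Worch}(1), passing ``down the ray'' locates predecessors on a ray rather than the quasi-simple vertex of the cone $\EKP(r)\cap\cC$ in the sense of \cite[3.1]{Wor1}, so this step does not by itself pin down the apex you need.
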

\begin{proof}
For each $l \in \NN$ we denote by $d(l)$ the distance of the cones $\EIP(r) \cap \cC$ and $\EKP(r) \cap \cC$ $($see \cite[3.1]{Wor1}$)$ at the level of quasi-length $l$. We have $d(l) = \cW(\cC) + (l-1)$ for all $l \in \NN$ by definition of $\cW(\cC)$. By definition of $m_{\cO_M}$ in Corollary $\ref{Corollary:Coxeterorbits}$, we have $d(\ql(M)) \leq m_{\cO_M}$. We conclude  
$\cW(\cC) = d(\ql(M)) - \ql(M) + 1\leq m_{\cO_M} - \ql(M) + 1$.
\end{proof}

\begin{example}
We consider $r = 3$ and the Coxeter orbit $\cO$  of $(30,31)$. We have $q_{\Gamma_3}(30,31) = - 929$ and the orbit looks as follows:
\footnotesize
\[
\cdots \stackrel{\Phi_3}{\leftarrow} (6846,2615)  \stackrel{\Phi_3}{\leftarrow}  (999,382) \stackrel{\Phi_3}{\leftarrow} (147,59) \stackrel{\Phi_3}{\leftarrow}  (30,31) \stackrel{\Phi_3}{\leftarrow} (63,158) \stackrel{\Phi_3}{\leftarrow} (411,1075) \stackrel{\Phi_3}{\leftarrow} (2814,7367)   \stackrel{\Phi_3}{\leftarrow} \cdots
.\]
\normalsize
We conclude that $\delta_\cO = (999,382)$ and $m_{\cO} = 5$. By \cite[3.4]{BoChen1}, every indecomposable representation $M$ with dimension vector $(30,31)$ is quasi-simple. Let $\cC$ be the regular component containing $M$, then $\cW(\cC) \leq m_\cO - \ql(M) + 1  = 5$.
\end{example}

\noindent In the following we consider for $r \geq 3$ the sequence $(A_i(r))_{i \in \NN}$ given by $A_1(r) := 1$, $A_2(r): = r$ and $A_{i+2}(r) := r A_{i+1}(r) - A_i(r)$ for all $i \in \NN$. 

\begin{corollary}
Let $(a,b) \in \Delta^{\im}_+(\Gamma_r)$ be an imaginary root such that $a \leq b$. There exists $t \geq r$ such that for all $s \geq t$ the following statements hold:
\begin{enumerate}
\item[$(1)$] We have $\Phi^{-1}_{s}(a,b) \in \underline{\EKP}(s)$. 
\item[$(2)$] Each indecomposable representation $M \in \rep(\Gamma_s)$ with dimension vector $(a,b)$ is regular and quasi-simple.
\item[$(3)$] For each indecomposable representation $M \in \rep(\Gamma_s)$ with dimension vector $(a,b)$ the almost split sequence \[ 0 \to M \to E \to \tau^{-1}_{\Gamma_s} M \to 0\] 
has indecomposable middle term and $\tau^{-1}_{\Gamma_s} M \in \EKP(s)$. 
\end{enumerate}

\end{corollary}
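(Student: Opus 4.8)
The plan is to fix the root $(a,b)$ and let $s$ grow, choosing $t$ large enough that three independent estimates hold at once. The first is that $(a,b)$ stays regular for every $s \geq r$: since $a,b \geq 1$, the function $s \mapsto q_{\Gamma_s}(a,b) = a^2+b^2-sab$ is decreasing, so $q_{\Gamma_s}(a,b) \leq q_{\Gamma_r}(a,b) \leq 0$, and $(\blacklozenge)$ shows that every indecomposable $M$ with $\dimu M = (a,b)$ is regular. In particular $M$ lies in a component of type $\ZZ\mathbb{A}_\infty$ and has a well-defined quasi-length.

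The heart of the argument is quasi-simplicity. Suppose some indecomposable $M$ with $\dimu M=(a,b)$ had quasi-length $l\ge 2$ and quasi-simple $N$, $\dimu N=(c,d)$. As $N$ is regular, $(c,d)$ is imaginary, so $d\ge 1$. The almost split sequence starting at $N$ reads $0\to N\to N[2]\to\tau^{-1}_{\Gamma_s}N\to 0$, whence $\dimu N[2]=\dimu N+\Phi^{-1}_s\dimu N$, whose first coordinate is $c+(sd-c)=sd\ge s$. Since $N[2]\hookrightarrow N[l]=M$ forces $\dimu M\ge\dimu N[2]$ coordinatewise, we would get $a\ge s$. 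Hence once $s>a$, every indecomposable with dimension vector $(a,b)$ is quasi-simple, proving $(2)$. Because $\tau$ preserves quasi-length, $\tau^{-1}_{\Gamma_s}M$ is then quasi-simple as well, so by the defining property of quasi-simple modules the almost split sequence terminating in it, namely $0\to M\to E\to\tau^{-1}_{\Gamma_s}M\to 0$, has indecomposable middle term; this is the first claim of $(3)$.

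It remains to prove $(1)$, which simultaneously delivers $\tau^{-1}_{\Gamma_s}M\in\EKP(s)$ in $(3)$, as $\dimu\tau^{-1}_{\Gamma_s}M=\Phi^{-1}_s(a,b)$. Writing $(x,y):=\Phi^{-1}_s(a,b)=(sb-a,\ (s^2-1)b-sa)$, one checks $0<x<y$ for $s\ge r$, and, using that the Coxeter transformation preserves $q_{\Gamma_s}$, that $q_{\Gamma_s}(x,y)=q_{\Gamma_s}(a,b)$; in particular $(x,y)\in\Delta^{\im}_+(\Gamma_s)$ is a positive root to which Theorem $\ref{Theorem:B}$ applies. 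A direct computation gives
\[ q_{\Gamma_s}(x,y)+|x-y| = b\,s^2-(ab+a+b)\,s+(a^2+b^2+a-b),\]
a quadratic in $s$ with positive leading coefficient $b\ge 1$, hence $\ge 1$ for all $s$ past some bound $t_2$. Theorem $\ref{Theorem:B}$ then puts $(x,y)\in\underline{\EKP}(s)\cup\underline{\EIP}(s)$, and since $(x,y)$ is imaginary with $x<y$, Westwick's Theorem excludes $\underline{\EIP}(s)$ (whose imaginary members have first coordinate strictly larger than the second); thus $(x,y)\in\underline{\EKP}(s)$. Taking $t:=\max\{r,\,a+1,\,t_2\}$ finishes the proof.

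The step I expect to be the main obstacle is the quasi-simplicity estimate: everything hinges on reading off that the first coordinate of $\dimu N[2]$ equals $sd$, which is at least $s$ and therefore exceeds the fixed value $a$ once $s$ is large. The remaining ingredients — monotonicity of $q_{\Gamma_s}(a,b)$ in $s$, the isometry property of $\Phi_s$, and the final quadratic estimate — are routine.
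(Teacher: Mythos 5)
Your argument is correct, and for parts $(1)$ and $(3)$ it runs essentially parallel to the paper: regularity via monotonicity of $s \mapsto q_{\Gamma_s}(a,b)$ and $(\blacklozenge)$, and the equal kernels property of $\Phi^{-1}_s(a,b)$ via the invariance of $q_{\Gamma_s}$ under the Coxeter transformation together with an estimate showing $q_{\Gamma_s}(\Phi_s^{-1}(a,b)) + |x-y| \geq 1$ for large $s$. The paper organizes that estimate slightly differently — it bounds $(s^2-s-1)b - (sb+s-1)a$ below by $0$ once $s \geq b+2$, which yields the explicit threshold $t = \max\{b+2,r+1\}$, and it quotes the directed version of Theorem \ref{Proposition:Wiedemann} (whose proof gives $\EKP$ outright when $\delta_1 \leq \delta_2$) rather than passing through Theorem \ref{Theorem:B} and then excluding $\underline{\EIP}(s)$ by a surjectivity/Westwick argument; both routes are sound and yours costs only an explicit value of $t$.

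The genuine divergence is in part $(2)$. The paper derives quasi-simplicity from the divisibility criterion of Chen (\cite{BoChen1}, cited as [3.4]): it suffices that no $A_i(s)$ with $i \geq 2$ divides both $a$ and $b$, which is immediate since $A_2(s) = s > b$. You instead give a self-contained growth argument: if $M = N[l]$ with $l \geq 2$ and $N$ quasi-simple with $\dimu N = (c,d)$, then the chain of irreducible monomorphisms along the ray forces $\dimu M \geq \dimu N[2] = \dimu N + \Phi_s^{-1}\dimu N$ coordinatewise, and the first coordinate of the latter is $sd \geq s$, contradicting $a < s$. This is correct (the component is of type $\ZZ\mathbb{A}_\infty$ since $s \geq r \geq 3$, $d \geq 1$ because $(c,d)$ is imaginary by $(\blacktriangledown)$, and $\dimu \tau^{-1}_{\Gamma_s}N = \Phi_s^{-1}\dimu N$ for regular $N$). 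What your approach buys is independence from the external reference at the price of a marginally larger implicit bound; what the paper's approach buys is brevity and a clean explicit $t$. Both are acceptable proofs of the corollary.
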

\begin{proof}
We set $t := \max\{b+2,r+1\}$ and fix $s \geq t$. \\
$(1)$ Cleary $(a,b)$ is an imaginary root for $q_{\Gamma_s}$ since $q_{\Gamma_s}(a,b) \leq q_{\Gamma_r}(a,b) \leq 0$. Let $\delta := \Phi^{-1}_{\Gamma_s}(a,b)$. Then $\delta_1 \leq \delta_2$ and
\begin{align*}
q_{\Gamma_s}(\delta) + \delta_2 - \delta_1 &=  a^2 + b^2 - sab + (s^2-1)b-sa - (bs-a) \\
&= a^2+b^2 +(s^2-s-1)b - (sb+s-1)a \geq a^2 + b^2 \geq 1,
\end{align*}
since $s^2-s -1 \geq sb+s-1$ by the choice of $s \geq t \geq b+2$. We conclude with Theorem $\ref{Proposition:Wiedemann}$ that $\Phi^{-1}_s(a,b) = \delta \in \underline{\EKP}(s)$.\\
$(2)$ Let $M \in \rep(\Gamma_s)$ be indecomposable with $\dimu M = (a,b) \in \Delta^{\im}_+(\Gamma_s)$, then $M$ is regular by $(\blacklozenge)$. We consider the ascending sequence $(A_i(s))_{i \in \NN}$. In view of \cite[3.4]{BoChen1} it suffices to show that $A_i(s)$ is not a common divisor of $a$ and $b$ for all $i \in \NN_{\geq 2}$. But this is trivial since $A_2(s) = s \geq t > b$.\\
$(3)$ This follows immediatly from $(1)$ and $(2)$.
\end{proof}

\noindent The proof of the following result may be found in the appendix.

\begin{Lemma}\label{Lemma:DimensionShiftSum}
Let $(a,b) \in \Delta_+^{\im}(\Gamma_r)$ and $l \in \NN$. The following statements hold:
\begin{enumerate}
\item[$(i)$] The element $(x,y) := \sum^{l}_{i=0} \Phi^{-i}_{r} (a,b)$ satisfies $x L_r - y = \frac{A_{l+1}(r)}{L^l_r}(aL_r - b)$.
\item[$(ii)$] We have $\frac{A_{l}(r)}{L^{l}_r} < 1$.
\end{enumerate}
\end{Lemma}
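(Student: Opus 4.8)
The plan is to reduce both statements to the Binet-type closed form of the sequence $(A_i(r))_{i \in \NN}$ together with a single eigenvalue computation for $\Phi_r^{-1}$. First I would record the closed form. The recurrence $A_{i+2}(r) = r A_{i+1}(r) - A_i(r)$ has characteristic polynomial $t^2 - rt + 1$, and by $(\star)$ its roots are exactly $L_r$ and $L_r^{-1}$ (recall $r - L_r = L_r^{-1}$, hence $L_r + L_r^{-1} = r$ and $L_r \cdot L_r^{-1} = 1$). Matching the initial values $A_1(r) = 1$ and $A_2(r) = r$ gives $A_i(r) = \frac{L_r^i - L_r^{-i}}{L_r - L_r^{-1}}$ for all $i \in \NN$, which will power both parts.

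For part $(i)$ I would introduce the linear form $f \colon \ZZ^2 \to \RR$, $f(u,v) := L_r u - v$. The key step is to verify that $f$ is a left eigenvector of $\Phi_r^{-1}$ with eigenvalue $L_r^{-2}$, i.e. $f(\Phi_r^{-1}(u,v)) = L_r^{-2} f(u,v)$. Using $\Phi_r^{-1}(u,v) = (-u + rv,\, -ru + (r^2-1)v)$ and the defining relation $L_r^2 = rL_r - 1$ (equivalently $r - L_r = L_r^{-1}$), this is a direct computation: the coefficient of $u$ reduces to $r - L_r = L_r^{-1}$ and the coefficient of $v$ reduces to $rL_r - r^2 + 1 = -L_r^{-2}$. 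Iterating gives $f(\Phi_r^{-i}(a,b)) = L_r^{-2i}(aL_r - b)$, so by linearity $f(x,y) = (aL_r - b)\sum_{i=0}^l L_r^{-2i}$. Finally the closed form yields $\frac{A_{l+1}(r)}{L_r^l} = \frac{L_r - L_r^{-(2l+1)}}{L_r - L_r^{-1}} = \frac{1 - L_r^{-2(l+1)}}{1 - L_r^{-2}} = \sum_{i=0}^l L_r^{-2i}$, which is precisely the claimed value of $xL_r - y$.

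Part $(ii)$ is then immediate from the same closed form: $\frac{A_l(r)}{L_r^l} = \frac{1 - L_r^{-2l}}{L_r - L_r^{-1}}$. Since $L_r - L_r^{-1} = 2L_r - r = \sqrt{r^2 - 4} > 1$ for $r \geq 3$, while $0 < 1 - L_r^{-2l} < 1$, the quotient is strictly less than $1$, as asserted.

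I expect the only genuine obstacle to be the eigenvalue verification $f \circ \Phi_r^{-1} = L_r^{-2} f$; once the eigenvector $f(u,v) = L_r u - v$ is guessed, the remaining work is routine manipulation of a geometric series and of the closed form for $A_i(r)$. A variant that sidesteps the Binet formula would be to prove the identity $\sum_{i=0}^l L_r^{-2i} = A_{l+1}(r)/L_r^l$ directly by induction on $l$, using the recurrence for $(A_i(r))$ and $r = L_r + L_r^{-1}$; but handling both parts through the closed form seems more transparent.
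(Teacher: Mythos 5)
Your proof is correct, and it takes a genuinely different route from the paper. The paper works entirely by induction: it first establishes the auxiliary identity $A_{l+1}(r) = L_r^{l} - A_l(r)L_r + rA_l(r)$, then proves $(i)$ by induction on $l$ via the decomposition $\sum_{i=0}^{l+1}\Phi_r^{-i}(a,b) = (a,b) + \sum_{i=0}^{l}\Phi_r^{-i}\bigl(\Phi_r^{-1}(a,b)\bigr)$, which requires a fairly lengthy computation with $(\star)$ in the inductive step, and it also proves $(ii)$ by induction from the same identity. You instead diagonalize: the row vector $(L_r,-1)$ is a left eigenvector of $\Phi_r^{-1}$ for the eigenvalue $L_r^{-2}$ (your verification is right: the $u$-coefficient is $r - L_r = L_r^{-1}$ and the $v$-coefficient is $rL_r - r^2 + 1 = -L_r^{-2}$, using $L_r^2 = rL_r - 1$), so the sum in $(i)$ collapses to the geometric series $(aL_r - b)\sum_{i=0}^{l}L_r^{-2i}$, and the Binet form $A_i(r) = \frac{L_r^{i}-L_r^{-i}}{L_r - L_r^{-1}}$ identifies this with $\frac{A_{l+1}(r)}{L_r^{l}}(aL_r - b)$; part $(ii)$ then drops out since $L_r - L_r^{-1} = \sqrt{r^2-4} > 1$ for $r \geq 3$. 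Your argument is shorter and more conceptual: it explains where the factor $L_r^{-2}$ comes from (it is the small eigenvalue of $\Phi_r^{-1}$, whose eigenvalues are $L_r^{\pm 2}$) and handles both parts from one closed form, whereas the paper's inductive computation stays entirely within integer recurrences and the single relation $(\star)$ and so avoids introducing the Binet formula. Both are complete proofs; yours is the one I would call the ``right'' explanation of the identity.
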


\begin{proposition}
Let $\cC$ be a regular component. There exists a uniquely determined quasi-simple representation $X_\cC$ in $\cC$ such that for each $N \in \cC$ the following statements are equivalent:
\begin{enumerate}
\item[$(1)$] Each $\El(r)$-filtration of $N$ is an $\cX$-filtration.
\item[$(2)$] The representation $N$ is a successor of $X_\cC$.
\end{enumerate}
\end{proposition}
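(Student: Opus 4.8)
The plan rests on the $\ZZ\mathbb{A}_\infty$-structure of $\cC$ (valid for $r\geq 3$): each $N\in\cC$ is $N=M[l]$ for a unique quasi-simple $M$ and a unique quasi-length $l\in\NN$, and the ray of irreducible monomorphisms $M[1]\subset M[2]\subset\cdots\subset M[l]$ is an $\El(r)$-filtration of $N$ whose factors are the quasi-simples $M,\tau^{-1}_{\Gamma_r}M,\ldots,\tau^{-(l-1)}_{\Gamma_r}M$. I would first note that a regular representation is elementary exactly when it is quasi-simple (a representation of quasi-length $\geq 2$ has a regular subrepresentation with regular quotient), so the factors of \emph{every} $\El(r)$-filtration are quasi-simple, and a quasi-simple $E$ lies in $\cX$ if and only if $E\in\EKP(r)$. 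Writing $T$ for the class of $N$ satisfying $(1)$, I record that $T$ is closed under $\tau^{-1}_{\Gamma_r}$ — because $\tau^{\pm1}_{\Gamma_r}$ permute the elementary representations and $\cX$ is $\tau^{-1}_{\Gamma_r}$-stable by \cite{Wor1} — and under passing from $M[l]$ to the subrepresentation $M[l-1]$.

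For $(1)\Rightarrow(2)$ I argue contrapositively: if the base $M$ of $N=M[l]$ is not in $\EKP(r)$, then the displayed filtration is an $\El(r)$-filtration whose bottom factor $M$ is not in $\cX$, so $N\notin T$. Hence $N\in T$ forces $M\in\EKP(r)$. Since the dimension vectors of the quasi-simples of $\cC$ run through a Coxeter orbit $\cO$ on which, by Lemma \ref{Lemma:Orbit}, $\delta_2-\delta_1$ is strictly increasing along $\tau^{-1}_{\Gamma_r}$ and eventually negative along $\tau_{\Gamma_r}$, and since $\EKP(r)$ is $\tau^{-1}_{\Gamma_r}$-closed while membership in $\EKP(r)$ requires $\delta_1\leq\delta_2$, the quasi-simples of $\cC$ lying in $\EKP(r)$ form an up-set $\{\tau^{-i}_{\Gamma_r}X_\cC\mid i\geq 0\}$. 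This both defines $X_\cC$ and shows that $N\in T$ implies $N$ is a successor of $X_\cC$; uniqueness of $X_\cC$ is built into this description.

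For $(2)\Rightarrow(1)$, the $\tau^{-1}_{\Gamma_r}$-closure of $T$ reduces everything to proving $X_\cC[l]\in T$ for all $l$, since $\tau^{-i}_{\Gamma_r}(X_\cC[l])=(\tau^{-i}_{\Gamma_r}X_\cC)[l]$ then exhausts the successors. When $\dimu X_\cC$ already lies in $\underline{\EKP}(r)$ I would use Lemma \ref{Lemma:DimensionShiftSum}: the identity $xL_r-y=\frac{A_l(r)}{L_r^{l-1}}(aL_r-b)$ together with $\frac{A_l(r)}{L_r^l}<1$ propagates the maximal-rank-type condition $a\leq b=\lfloor aL_r\rfloor$ from $\dimu X_\cC$ to $\dimu X_\cC[l]$, after which Proposition \ref{Proposition:Filtration} shows that every $\El(r)$-filtration of $X_\cC[l]$ has maximal-rank-type factors with at most one violating $q_{\Gamma_r}(a_i,b_i)+b_i-a_i\geq 1$, and Corollary \ref{Corollary:FiltrationElementary}$(1)$ identifies such an exceptional factor as some $X_\alpha$. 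To exclude $X_\alpha$ I would invoke $X_\cC[l]\in\EKP(r)$, so $\Hom_{\Gamma_r}(X_\alpha,X_\cC[l])=0$ by Theorem \ref{Theorem:Worch}, and run this vanishing through the short exact sequences of the filtration against the surjection onto the alleged factor $X_\alpha$. For bases strictly beyond the threshold, Corollary \ref{Corollary:FiltrationElementary}$(2)$ applies verbatim, realizing $X_\cC[l]$'s successors as $\tau^{-i}_{\Gamma_r}$ (with $i\geq 1$) of a maximal-rank-type representation.

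The hard part is exactly the boundary of the threshold, and in particular the \emph{accidental} equal-kernels quasi-simples: $X_\cC$ need not satisfy $\dimu X_\cC\in\underline{\EKP}(r)$ (already for $r=3$ a quasi-simple of dimension vector $(2,4)$ can lie in $\EKP(r)$ while $(2,4)\notin\underline{\EKP}(r)$). In this regime $\dimu X_\cC[l]$ fails to be of maximal rank type, so Proposition \ref{Proposition:Filtration} and the dimension-counting of Corollary \ref{Corollary:FiltrationElementary} no longer apply, and ruling out a non-equal-kernels elementary factor must instead be carried out purely homologically, leveraging that $X_\cC[l]\in\EKP(r)$, that its subrepresentations inherit the equal-kernels property, and the rigidity of the $X_\alpha$ supplied by Theorem \ref{Theorem:Worch}; the quantitative estimates of Lemmas \ref{Lemma:DistanceToCa} and \ref{Lemma:DimensionShiftSum} are what keep this analysis finite. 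Once $X_\cC[l]\in T$ is established for all $l$, the equivalence $(1)\Leftrightarrow(2)$ holds and pins down the successor cone, whence $X_\cC$ is unique.
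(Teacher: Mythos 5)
Your proposal has two substantive errors, and they break the argument in both directions. First, the claim that a regular representation is elementary exactly when it is quasi-simple is false: only ``elementary $\Rightarrow$ quasi-simple'' holds. A quasi-simple representation may admit a short exact sequence $0 \to A \to M \to B \to 0$ with $A,B$ regular and nonzero (the pieces then lie in other regular components), so the ray $M[1]\subset\cdots\subset M[l]$ is in general \emph{not} an $\El(r)$-filtration, and $\El(r)$-filtrations need not refine the quasi-simple series. Second, and decisively, your identification of $X_\cC$ as the first quasi-simple of $\cC$ lying in $\EKP(r)$ is wrong. Example (ii) following Corollary \ref{Corollary:FiltrationElementary} exhibits a quasi-simple representation $\pi_\lambda(M)$ with $\dimu\pi_\lambda(M)=(4,6)$ which lies in $\EKP(3)$ (all arrow maps of $M$ are injective, so Theorem \ref{Theorem:INJEKP} applies) but admits an $\El(3)$-filtration with a factor of dimension vector $(2,1)\notin\cX$; since $\Phi_3(4,6)=(14,6)$ fails Westwick's bound, $\pi_\lambda(M)$ is the \emph{first} equal-kernels quasi-simple of its component. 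So for your $X_\cC$ the implication $(2)\Rightarrow(1)$ is simply false already for $N=X_\cC$, and the ``purely homological'' repair you defer to at the boundary cannot exist: the correct $X_\cC$ generally sits strictly further along the $\tau^{-1}_{\Gamma_r}$-orbit than the first equal-kernels quasi-simple. Your contrapositive for $(1)\Rightarrow(2)$ then only yields the (too weak) containment of $T$ in the successors of the first equal-kernels quasi-simple.

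The paper's proof does not try to name $X_\cC$ in advance. Using Corollaries \ref{Corollary:DimensionVectorIsMaximal} and \ref{Corollary:Coxeterorbits} it selects a quasi-simple $X\in\cC$ with $\dimu X=(u,v)$ satisfying $u\le v=\lfloor uL_r\rfloor$, computes $aL_r-b<L_r^{-2}$ for $(a,b)=\dimu\tau^{-1}_{\Gamma_r}X$, and propagates the maximality condition $x\le y=\lfloor xL_r\rfloor$ to $\dimu\tau^{-1}_{\Gamma_r}X[l]$ for \emph{every} quasi-length $l$ via Lemma \ref{Lemma:DimensionShiftSum}; Corollary \ref{Corollary:FiltrationElementary}(2) then gives property $(1)$ for all successors of $Y:=\tau^{-2}_{\Gamma_r}X$ at once. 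Finally $X_\cC$ is defined as $\tau^{l}_{\Gamma_r}Y$ with $l$ maximal such that every successor still satisfies $(1)$, the maximum existing because $\tau^{m}_{\Gamma_r}Y\notin\EKP(r)$ for some $m$. Your plan does contain the germ of the Lemma \ref{Lemma:DimensionShiftSum} plus Corollary \ref{Corollary:FiltrationElementary} step, but it is anchored to the wrong representation and leaves the essential case open, so the proof is not complete.
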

\begin{proof} 
We first show that there exists a quasi-simple representation $Y$ in $\cC$ such that for every successor $N$ of $Y$, every $\El(r)$-filtration of $N$ is an $\cX$-filtration. 
By Corollary $\ref{Corollary:DimensionVectorIsMaximal}$ and Corollary $\ref{Corollary:Coxeterorbits}$ we find a quasi-simple representation $X \in \cC$ such that $\dimu X = (u,v)$ satisfies $u \leq v = \lfloor u L_r \rfloor$. We set $Y := \tau^{-2}_{\Gamma_r} X$ and $(a,b) := \dimu \tau^{-1}_{\Gamma_r} X$. Since $u L_r - v < 1$, we get
\begin{align*}
a L_r - b &= v(r L_r - r^2 + 1) + u(r - L_r) \stackrel{(\star)}{=} v(r L_r - r^2 + 1) + \frac{u}{L_r} \\
&=\frac{1}{L^2_r}(uL_r - v) + v(\frac{1}{L^2_r}+ r L_r - r^2 +1) \stackrel{(\star)}{=}  \frac{1}{L^2_r}(uL_r - v) < \frac{1}{L^2_r}.
\end{align*} 
Let $l \in \NN$ and note that
\[(x,y) := \sum^{l-1}_{i=0} \Phi^{-i}_r (a,b) = \dimu \tau^{-1}_{\Gamma_r} X[l].\] 
Clearly $x \leq y$ and $(\blacktriangledown)$ implies $0 \leq xL_r -y$. Now we conclude with Lemma $\ref{Lemma:DimensionShiftSum}(i),(ii)$:
\[ 0 \leq xL_r -y = \frac{A_{l}(r)}{L^{l-1}_r}(aL_r - b) < \frac{A_{l}(r)}{L^{l+1}_r} < 1.\]
Hence $x \leq y = \lfloor x L_r \rfloor$. Therefore Corollary $\ref{Corollary:FiltrationElementary}$ implies that for $i \in \NN$ every $\El(r)$-filtration of $\tau^{-i} (\tau^{-1}_{\Gamma_r} X)[l] = \tau^{-{(i-1)}}_{\Gamma_r}(Y[l])$ is an $\cX$-filtration. Hence $Y$ has the desired property.\\
Clearly $Y \in \EKP(r)$ and we find $m \in \NN$ such that $\tau^{m}_{\Gamma_r} Y \not\in \EKP(r)$. Now let $l \in \{0,\ldots,m-1\}$ maximal such that for every successor $N$ of $\tau^l_{\Gamma_r} Y$, every $\El(r)$-filtration of $N$ is an $\cX$-filtration.  Set $X_\cC := \tau^l_{\Gamma_r} Y$ and note that $X_\cC$ has the desired properties.
\end{proof}

The duality $D_{\Gamma_r} \colon \rep(\Gamma_r) \to \rep(\Gamma_r)$ introduced in \cite[2.2]{Wor1} satisfies $D_{\Gamma_r}(\EKP(r)) = \EIP(r)$ and commutes with the Auslander-Reiten translation. Therefore we conclude:

\begin{proposition}
Let $\cC$ be a regular component. There exists a uniquely determined quasi-simple representation $Y_\cC$ in $\cC$ such that for each $N \in \cC$ the following statements are equivalent:
\begin{enumerate}
\item[$(1)$] Each $\El(r)$-filtration of $N$ is a $\cY$-filtration, where $\cY := \El(r) \cap \EIP(r)$.
\item[$(2)$] The representation $N$ is a predecessor of $Y_\cC$.
\end{enumerate}
\end{proposition}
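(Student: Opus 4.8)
The plan is to deduce this statement from the preceding proposition by transporting it through the duality $D_{\Gamma_r}$. Recall that $D_{\Gamma_r}$ is a contravariant self-equivalence of $\rep(\Gamma_r)$ with $\dim_k(D_{\Gamma_r}L)_i = \dim_k L_{3-i}$, it commutes with $\tau_{\Gamma_r}$, and it satisfies $D_{\Gamma_r}(\EKP(r)) = \EIP(r)$; since $D_{\Gamma_r}$ is a duality we have $D_{\Gamma_r}^2 \cong \id$ and hence also $D_{\Gamma_r}(\EIP(r)) = \EKP(r)$. First I would record that $D_{\Gamma_r}$ preserves the class of regular representations (as it commutes with $\tau_{\Gamma_r}$) and, more importantly, the class $\El(r)$: applying $D_{\Gamma_r}$ to a short exact sequence $0 \to A \to E \to B \to 0$ of regular representations yields a short exact sequence $0 \to D_{\Gamma_r}B \to D_{\Gamma_r}E \to D_{\Gamma_r}A \to 0$ of the same type, so $E$ is elementary if and only if $D_{\Gamma_r}E$ is. Combining these observations gives $D_{\Gamma_r}(\cX) = \cY$ and $D_{\Gamma_r}(\cY) = \cX$.

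Next I would translate filtrations. A filtration $0 = N_0 \subset \cdots \subset N_m = N$ dualizes to a filtration $0 = K_0 \subset \cdots \subset K_m = D_{\Gamma_r}N$ with $K_i := \ker(D_{\Gamma_r}N \twoheadrightarrow D_{\Gamma_r}N_{m-i})$, whose factors are $K_i/K_{i-1} \cong D_{\Gamma_r}(N_{m-i+1}/N_{m-i})$. This sets up a bijection between the $\El(r)$-filtrations of $N$ and those of $D_{\Gamma_r}N$ under which, by the previous paragraph, the $\cY$-filtrations of $N$ correspond exactly to the $\cX$-filtrations of $D_{\Gamma_r}N$. Hence condition $(1)$ for $N$ is equivalent to the statement that every $\El(r)$-filtration of $D_{\Gamma_r}N$ is an $\cX$-filtration.

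Finally I would pass to the Auslander-Reiten quiver. Since $D_{\Gamma_r}$ commutes with $\tau_{\Gamma_r}$ and sends almost split sequences to almost split sequences, it maps the regular component $\cC$ to a regular component $D_{\Gamma_r}(\cC)$, preserves quasi-simplicity, and---being contravariant on morphisms---reverses the orientation of paths, so that $D_{\Gamma_r}$ interchanges successors and predecessors. Applying the preceding proposition to $D_{\Gamma_r}(\cC)$ produces a quasi-simple $X_{D_{\Gamma_r}(\cC)}$, and I would set $Y_\cC := D_{\Gamma_r}(X_{D_{\Gamma_r}(\cC)})$, a quasi-simple representation in $\cC$. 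Chaining the equivalences gives, for $N \in \cC$: condition $(1)$ $\iff$ every $\El(r)$-filtration of $D_{\Gamma_r}N$ is an $\cX$-filtration $\iff$ $D_{\Gamma_r}N$ is a successor of $X_{D_{\Gamma_r}(\cC)}$ $\iff$ $N$ is a predecessor of $Y_\cC$, which is condition $(2)$. Uniqueness of $Y_\cC$ follows from the uniqueness of $X_{D_{\Gamma_r}(\cC)}$ together with the bijectivity of $D_{\Gamma_r}$.

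The one step deserving genuine care is the claim that $D_{\Gamma_r}$ reverses the successor relation: I would verify it by checking that the contravariant $D_{\Gamma_r}$, while fixing the $\tau_{\Gamma_r}$-direction, interchanges the rays of irreducible monomorphisms with the corays of irreducible epimorphisms in the $\ZZ A_\infty$ component, which is exactly what turns an oriented path from $X_{D_{\Gamma_r}(\cC)}$ to $D_{\Gamma_r}N$ into an oriented path from $N$ to $Y_\cC$. Everything else is a routine formal dualization.
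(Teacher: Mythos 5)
Your proposal is correct and follows exactly the route the paper takes: the paper states the properties of the duality $D_{\Gamma_r}$ (that it exchanges $\EKP(r)$ and $\EIP(r)$ and is compatible with the Auslander--Reiten translation) and then simply writes ``Therefore we conclude'' without further argument, so your write-up is a faithful, more detailed elaboration of that same dualization. The only point worth flagging is that a contravariant duality sends an almost split sequence $0 \to \tau M \to E \to M \to 0$ to one ending in $D_{\Gamma_r}(\tau M)$, so one really has $D_{\Gamma_r}\tau_{\Gamma_r} \cong \tau^{-1}_{\Gamma_r} D_{\Gamma_r}$ rather than strict commutation; this does not affect your conclusion, since either way $D_{\Gamma_r}$ induces an arrow-reversing bijection of the $\ZZ\mathbb{A}_\infty$ component and hence interchanges successors and predecessors as you claim.
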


\subsection{Non-split exact sequences in the universal covering}

Given an indecomposable representation $M \in \rep(C_r)$, the underlying graph of $\supp(M)$ forms a finite tree.\\
The representation $M$ is called \textsf{sink representation} $($\textsf{source representation}$)$, provided each path in $\supp(M)$ of maximal length connects two leaves of $M$ that are sinks $($sources$)$. If each path in $\supp(M)$ of maximal length connects a source leaf and a sink leaf, $M$ is called a \textsf{flow representation}.
It follows from \cite[2.4]{Ri11} that every regular indecomposable representation is a sink, source or a flow representation and \cite[Theorem 4]{Ri11} gives a description of the distribution of sink, source and flow representations in a given regular component $\cC$. These considerations lead to new invariants for gradable indecomposable representations in $\rep(\Gamma_r)$, i.e. indecomposable representations in the essential image of $\pi_{\lambda} \colon \rep(C_r) \to \rep(\Gamma_r)$.\\
By Theorem $\ref{Theorem:INJEKP}$ we know that every indecomposable representation $N \in \Inj(r)$ is a sink representation. Moreover, we have $\dim_k N_{s(\gamma)} \leq \dim_k N_{t(\gamma)}$ for each arrow $\gamma \in (C_r)_1$. In contrast to the whole class of sink representations, there is a natural analogue of $\Inj(r)$ for ungraded Kronecker representations given by the equal kernels property.\\
Therefore it makes sense to take a closer look at the structure of $\supp(M)$ and $\dimu M$ for $M \in \Inj(r)$ indecomposable in hope to get a better understanding of sink representations. In view of Theorem $\ref{Theorem:INJEKP}$ we study the following question:

\vspace*{6pt}

\noindent \textbf{Question} Let $r \geq 1$ and $M \in \rep(C_r)$ be an indecomposable representation such that $\dim_k M_{s(\gamma)} \leq \dim_k M_{t(\gamma)}$ for each arrow $\gamma \in (C_r)_1$. Does this already imply that $M \in \Inj(r)$?

\vspace*{6pt}
Let us consider the first non-trivial case, i.e. $r = 2$. Then the underlying graph of $C_2$ is of type $\mathbb{A}^\infty_\infty$, i.e. each indecomposable representation $N$ in $\rep(C_2)$ can be considered as a representation for a quiver $Q$ with underlying graph $\mathbb{A}_n$ for some $n \in \NN$. Therefore  $N$ is a thin representation. Hence every indecomposable representation $M \in \rep(C_2)$ with $\dim_k M_{s(\gamma)} \leq \dim_k M_{t(\gamma)}$ for all $\gamma \in (C_2)_1$ is already in $\Inj(2)$ and $\pi_{\lambda}(M) \in \EKP(2)$.

\vspace*{6pt}

In the following we give a negative answer to the question for $r \geq 3$. The main ingredient in our construction of counterexamples are indecomposable representations $X_i \in \rep(C_r)$, $\{1,\ldots,r\}$ with corresponding push-down $\pi_{\lambda}(X_i) = X_{e_i}$, where $e_i$ denotes the $i$-th canonical basis vector of $k^r$ $($see Theorem $\ref{Theorem:Worch})$.\\ 
We fix a source $z \in (C_r)_0$ and denote by $y_i \in n_{C_r}(z)$ the unique element with $\pi(z \to y_i) = \gamma_i$. Then $X_{i,z} = X_i$ is by definition the uniquely determined indecomposable and thin representation with $\supp(X_i) = \{z\} \cup \bigcup_{j \neq i} \{y_j\}$.\\
Let $M \in \rep(C_r)$ be an indecomposable representation, $x \to y \in (C_r)_1$, and $i \in \{1,\ldots,r\}$ with $\pi(x \to y) = \gamma_i$. Let $g \in G(r)$ be the unique element with $x \in \supp(X^g_i)$. In view of $\ref{Theorem:Worch}$, $\ref{TheoremRingelGabriel}$ and $\ref{Theorem:INJEKP}$ we have $\Hom_{C_r}(X^g_i,M) = 0$ if and only if $M(x \to y)$ is injective and $\Ext^1_{C_r}(X^g_i,M) = 0$ if and only if $M(x \to y)$ is surjective. Now we can prove the following:

\begin{Lemma}\label{Lemma:ExtensionUniversal}
Let $M \in \rep(C_r)$ be an indecomposable representation in $\Inj(r)$ and $\gamma \in (C_r)_1$ such that $\tau_{C_r} M(\gamma)$ is surjective but not injective. Let $i \in \{1,\ldots,r\}$ be the unique element with $\pi(\gamma) = \gamma_i$. There exists $g \in G(r)$ and a short exact sequence $0 \to X^g_i \to E \to M \to 0$ with indecomposable middle term $E$. In other words, there exists an indecomposable representation $\pi_{\lambda}(E) \not\in \EKP(r)$ with dimension vector $\dimu \pi_\lambda(E)+(1,r-1)$. If in addition $s(\gamma) \in \supp(M)$, then $\supp(M) = \supp(E)$.
\end{Lemma}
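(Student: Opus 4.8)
The plan is to produce the extension from the Auslander--Reiten formula and then force the middle term to be indecomposable by a $\Hom$-dimension count. Throughout I use the characterization recalled just before the statement: for the arrow $\gamma\colon x\to y$ with $\pi(\gamma)=\gamma_i$ and $g$ the unique element with $x\in\supp(X^g_i)$, one has $\Hom_{C_r}(X^g_i,N)=0$ if and only if $N(\gamma)$ is injective, for every $N\in\rep(C_r)$. Note that $\tau_{C_r}M$ is being considered, so $M$ is not projective and $\tau^{-1}_{C_r}\tau_{C_r}M\cong M$. First I would exploit that $\tau_{C_r}M(\gamma)$ is \emph{not} injective: by the characterization applied to $N=\tau_{C_r}M$ this means $\Hom_{C_r}(X^g_i,\tau_{C_r}M)\neq 0$. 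Since $\rep(C_r)$ has Auslander--Reiten theory by Theorem~\ref{TheoremRingelGabriel}$(c)$ and is hereditary, the Auslander--Reiten formula holds in the form already used in Proposition~\ref{Proposition:NotMaximalRank}, giving
\[
\dim_k\Ext^1_{C_r}(M,X^g_i)=\dim_k\Hom_{C_r}(X^g_i,\tau_{C_r}M)\neq 0.
\]
Hence there is a non-split short exact sequence $\delta\colon 0\to X^g_i\to E\to M\to 0$. (The surjectivity of $\tau_{C_r}M(\gamma)$ is not needed here; it only records $\Ext^1_{C_r}(X^g_i,\tau_{C_r}M)=0$, the non-injectivity being what produces the extension.)

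For the indecomposability of $E$ I would compute $\dim_k\Hom_{C_r}(X^g_i,E)$. Applying $\Hom_{C_r}(X^g_i,-)$ to $\delta$ yields the exact sequence
\[
0\to\Hom_{C_r}(X^g_i,X^g_i)\to\Hom_{C_r}(X^g_i,E)\to\Hom_{C_r}(X^g_i,M).
\]
Here $\Hom_{C_r}(X^g_i,M)=0$, because $M\in\Inj(r)$ makes $M(\gamma)$ injective, and $\End_{C_r}(X^g_i)=k$ since $X^g_i$ is a thin indecomposable representation of a tree, hence a brick. Therefore $\dim_k\Hom_{C_r}(X^g_i,E)=1$. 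Now suppose $E=E_1\oplus\cdots\oplus E_l$ with $l\geq 2$ and each $E_j\neq 0$. All these representations have finite support, so they lie in $\rep(Q)$ for a finite subquiver $Q\subseteq C_r$, and $\Hom$, $\Ext^1$ over $kQ$ agree with those computed in $\rep(C_r)$; thus Lemma~\ref{Lemma:NonSplit} applies over $kQ$. As $\delta$ is non-split and $M$ is indecomposable, Lemma~\ref{Lemma:NonSplit} forces $\Hom_{C_r}(X^g_i,E_j)\neq 0$ for every $j$, whence $\dim_k\Hom_{C_r}(X^g_i,E)\geq l\geq 2$, a contradiction. Hence $E$ is indecomposable. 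I expect this dimension count, together with the reduction to a finite subquiver, to be the main point of the proof.

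Next I would read off the push-down statement. By Theorem~\ref{TheoremRingelGabriel}$(a)$ the representation $\pi_\lambda(E)$ is indecomposable. Since $X^g_i$ is a $G(r)$-translate of $X_i$, Theorem~\ref{TheoremRingelGabriel}$(b)$ gives $\pi_\lambda(X^g_i)\cong X_{e_i}$, so $\dimu\pi_\lambda(X^g_i)=(1,r-1)$; applying the exact functor $\pi_\lambda$ to $\delta$ then yields $\dimu\pi_\lambda(E)=\dimu\pi_\lambda(M)+(1,r-1)$. Moreover the inclusion $X^g_i\hookrightarrow E$ is a monomorphism, and exactness of $\pi_\lambda$ sends it to a monomorphism $X_{e_i}\to\pi_\lambda(E)$, which is nonzero. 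Thus $\Hom_{\Gamma_r}(X_{e_i},\pi_\lambda(E))\neq 0$, and since $X_{e_i}=X_\alpha$ for $\alpha=e_i$, Theorem~\ref{Theorem:Worch}$(1)$ gives $\pi_\lambda(E)\notin\EKP(r)$.

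Finally, for the support statement, exactness of $\delta$ at each vertex $v$ gives $\dim_k E_v=\dim_k(X^g_i)_v+\dim_k M_v$, so $\supp(E)=\supp(X^g_i)\cup\supp(M)$. Assume $x=s(\gamma)\in\supp(M)$, i.e.\ $M_x\neq 0$. Since $M\in\Inj(r)$, every arrow $x\to w$ of $C_r$ acts injectively on $M$, so $M_w\supseteq\im M(x\to w)\neq 0$ for each neighbour $w\in n_{C_r}(x)$. Because $\supp(X^g_i)$ consists of $x$ together with the neighbours $w\in n_{C_r}(x)$ with $\pi(x\to w)\neq\gamma_i$, every vertex of $\supp(X^g_i)$ lies in $\supp(M)$, and therefore $\supp(E)=\supp(M)$.
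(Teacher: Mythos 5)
Your proof is correct, but it reaches the indecomposability of $E$ by a genuinely different route than the paper. The paper uses the surjectivity of $\tau_{C_r}M(\gamma)$ together with the Auslander--Reiten formula to get $\Hom_{C_r}(M,X^g_i)\cong\Ext^1_{C_r}(X^g_i,\tau_{C_r}M)=0$, so that $M$ and $X^g_i$ are $\Hom$-orthogonal with $\Ext^1_{C_r}(M,X^g_i)\neq 0$, and then invokes Weist's theorem \cite[3.7]{Weist} to produce \emph{some} short exact sequence with indecomposable middle term. You instead prove that \emph{every} non-split extension $0\to X^g_i\to E\to M\to 0$ has indecomposable middle term: since $X^g_i$ is a brick and $\Hom_{C_r}(X^g_i,M)=0$, left exactness of $\Hom_{C_r}(X^g_i,-)$ gives $\dim_k\Hom_{C_r}(X^g_i,E)=1$, while Lemma~\ref{Lemma:NonSplit} (applied over a finite subquiver containing the supports, as you correctly note) forces every nonzero direct summand of $E$ to receive a nonzero map from $X^g_i$. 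This is more elementary (it avoids the external reference to Weist entirely, using only a lemma already in the paper), it yields a slightly stronger conclusion, and --- as you observe --- it makes the surjectivity hypothesis on $\tau_{C_r}M(\gamma)$ superfluous for the existence and indecomposability of $E$; that hypothesis is only doing work in the paper because Weist's theorem requires $\Hom$-orthogonality in both directions. Your treatment of the push-down statement (via exactness of $\pi_\lambda$ and Theorem~\ref{Theorem:Worch}) and of the support statement agrees with the paper's.
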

\begin{proof} 
Since $M \in \Inj(r)$, we have $\Hom_{C_r}(X^h_i,M) = 0$ for all $h \in G(r)$. Let $g \in G(r)$ be such that $s(\gamma) \in \supp(X^g_i)$. Since $\tau_{C_r} M(\gamma)$ is surjective, we conclude with the Auslander-Reiten formula
\[ \Hom_{C_r}(M,X_i^g) \cong \Ext^1_{C_r}(X_i^g,\tau_{C_r} M) = 0.\]
Hence $M$ and $X_i^g$ are $\Hom$-orthogonal.
Since $\tau_{C_r} M(\gamma)$ is not injective, we have 
\[0 \neq \Hom_{C_r}(X_i^g,\tau_{C_r} M) \cong \Ext^1_{C_r}(M,X_i^g).\]
Hence \cite[3.7]{Weist} implies that we can find a short exact sequence 
\[ 0 \to X_i^g \to E \to M \to 0 \]
with indecomposable middle term.\\
Now we assume that $s(\gamma) \in \supp(M)$. Since $M \in \Inj(r)$, we get $\supp(X^g_i) \subseteq \{s(\gamma)\} \cup n_{C_r}(s(\gamma)) \subseteq \supp(M)$. 
Hence $\supp(E) = \supp(X^g_i) \cup \supp(M) = \supp(M)$.
\end{proof}

\begin{example}
Let $r = 3$, fix a source $x \in (C_3)_0$ and consider an indecomposable regular representation $Y \in \rep(C_r)$ with support $\supp(Y) = \{x\} \cup n_{C_3}(x)$ such that $\dim_k Y_x = 2$ and $\dim_k Y_z = 1$ for all $z \in n_{C_3}(x)$. We illustrate the support of $Y$ in the following diagram $(x$ and a fixed neighbour $y$ of $x$ are indicated by small indices$)$: 
\tikzstyle{level 1}=[sibling angle=120,level distance = 30, ->]
\tikzstyle{level 2}=[sibling angle=90,level distance = 30, <-]
\tikzstyle{level 3}=[sibling angle=60,level distance = 30, ->]
\tikzstyle{level 3}=[sibling angle=45,level distance = 30, ->]
\tikzstyle{every node}=[]
\tikzstyle{edge from parent}=[segment angle=10,draw]

\begin{center}
\begin{tikzpicture}[grow cyclic, shape=circle,cap=round]
\node {$2_x$} 
    child { node {1}}
    child { node {$1_y$}}
    child { node {1}};
\end{tikzpicture}
\end{center}
The support of $M := \tau^{-1}_{C_3} Y$ looks as follows:
\begin{center}
\begin{tikzpicture}[grow cyclic, shape=circle,cap=round]
\node {$1_x$} 
    child { node {2} child {node {1} child {node {1}} child {node {1}}} child {node {1} child {node {1}} child {node {1}}}}
    child { node {$2_y$} child {node {1} child {node {1}} child {node {1}}} child {node {1} child {node {1}} child {node {1}}}}
    child { node {2} child {node {1} child {node {1}} child {node {1}}} child {node {1} child {node {1}} child {node {1}}}};
\end{tikzpicture}
\end{center}
Note that $M \in \Inj(3)$, since every source in $\supp(M)$ is thin and $M$ is indecomposable. Now consider $i \in \{1,2,3\}$ such that $\pi(x \to y) = \gamma_i$. Without loss we can assume that $x \in \supp(X_i)$. Then the support of $X_i$ looks as follows:
\begin{center}
\begin{tikzpicture}[grow cyclic, shape=circle,cap=round]
\node {$1_x$} 
    child { node {1}}
    child { node {$0_y$}}
    child { node {1}};
\end{tikzpicture}
\end{center}
Since $Y(x \to y)$ is surjective but not injective, Lemma $\ref{Lemma:ExtensionUniversal}$ implies that we find a short exact sequence $0 \to X_i \to E \to M \to 0$ with indecomposable middle term. Hence the support of $E$ looks as follows: 
\begin{center}
\begin{tikzpicture}[grow cyclic, shape=circle,cap=round]
\node {$2_x$} 
    child { node {3} child {node {1} child {node {1}} child {node {1}}} child {node {1} child {node {1}} child {node {1}}}}
    child { node {$2_y$} child {node {1} child {node {1}} child {node {1}}} child {node {1} child {node {1}} child {node {1}}}}
    child { node {3} child {node {1} child {node {1}} child {node {1}}} child {node {1} child {node {1}} child {node {1}}}};
\end{tikzpicture}
\end{center}
Observe that $E_a \leq E_b$ for all arrows $a \to b$ but $E \notin \Inj(3)$ since $\ker E(x \to y) \neq 0$.
Clearly we can extend this example to all $r \geq 3$: We fix a source $x \in (C_r)_0$ and consider an indecomposable regular representation $Y \in \rep(C_r)$ with support $\supp(Y) = \{x\} \cup n_{C_r}(x)$ such that $\dim_k Y_x = r-1$ and $\dim_k Y_z = 1$ for all $z \in n_{C_r}(x)$. As before we fix a neighbour $y$ of $x$. Then $M:=\tau^{-1}_{C_r} Y \in \Inj(r)$, $\dim_k M_x = 1$ and $\dim_k M_z = r-1$ for all $z \in n_{C_r}(x)$. Let $i \in \{1,\ldots,r\}$ such that $\pi(x \to y) = \gamma_i$ and apply Lemma $\ref{Lemma:ExtensionUniversal}$ to find a short exact sequence $0 \to X_i \to E \to M \to 0$ with indecomposable middle term. Since $0 \neq \ker E(x \to y)$, we conclude as before that $E \not\in \Inj(r)$ and $\pi_{\lambda}(E) \not\in \EKP(r)$. By construction we have $\dim_k E_a \leq \dim_k E_b$ for all arrows $a \to b \in (C_r)_1$.

\end{example}

\begin{Remark}
Note that these counterexamples are minimal in the sense that there is exactly one arrow $\gamma \in (C_r)_0$ such that $E(\gamma)$ is not injective.
\end{Remark}
 \section{Appendix}

\footnotesize
\noindent In the following we provide the proofs of Lemma $\ref{Lemma:DistanceToCa}$, Lemma $\ref{Lemma:Average}$, Proposition $\ref{Proposition:Technical}$ and Lemma $\ref{Lemma:DimensionShiftSum}$.

\bigskip

\noindent \textbf{Proof of Lemma $\ref{Lemma:DistanceToCa}$}
\begin{proof}
By assumption we have $a^2 + b^2 - rab + b -a \leq 0$ and conclude
$b^2+b(1-ra) \leq \frac{4a-4a^2}{4}$. This is equivalent to
\[(b+\frac{1-ra}{2})^2 \leq \frac{4a-4a^2+1-2ra+r^2a^2}{4} = \frac{(r^2-4)a^2+(4-2r)a+1}{4},\]
in particular $ \frac{(r^2-4)a^2+(4-2r)a+1}{4} \geq 0$ and therefore
\begin{align*}
& \quad \ \ \frac{1-ra}{2} - \frac{\sqrt{(r^2-4)a^2+(4-2r)a+1}}{2} \leq -b\\
&\Leftrightarrow \frac{1-ra}{2} - \frac{\sqrt{(r^2-4)a^2+(4-2r)a+1}}{2} + \frac{r+\sqrt{r^2-4}}{2}a \leq -b +\frac{r+\sqrt{r^2-4}}{2}a \\
&\Leftrightarrow \frac{1}{2} + \frac{\sqrt{r^2-4}a- {\sqrt{(r^2-4)a^2+(4-2r)a+1}}}{2} \leq a L_r - b.
\end{align*}
Since $r \geq 3$ and $a \geq 1$, we have $\sqrt{(r^2-4)a^2+(4-2r)a+1} \leq \sqrt{(r^2-4)a^2+(4-6)a+1} \leq \sqrt{(r^2-4)a^2}$ and conclude
$a L_r -b \geq \frac{1}{2}.$
\end{proof}

\noindent \textbf{Proof of Lemma $\ref{Lemma:Average}$}

\noindent Since $q_{\Gamma_r}(a_i,b_i) \leq 0$, we conclude with $(\blacktriangledown)$ that $a_i \leq b_i \leq a_i L_r$ or $b_i \leq a_i \leq b_i L_r$ for all $i \in \{1,\ldots,n\}$. Hence $b_i \leq \lfloor a_i L_r \rfloor \leq a_i L_r$ for all $i \in \{1,\ldots,n\}$.
Assume that $b_i < \lfloor a_i L_r \rfloor $ for some $i \in \{1,\ldots,n\}$. We get $\sum^n_{j=1} (b_j+\delta_{ij} -a_j L_r) \leq 0$, hence $\frac{b+1}{a}= \frac{\sum^n_{j=1} b_j + \delta_{ij}}{\sum^n_{j=1} a_j} \leq L_r$ and $a \leq b+1 \leq a L_r$, a contradiction to $(\blacktriangledown)$ since $b = \lfloor a L_r \rfloor$. Hence $b_i = \lfloor a_i L_r \rfloor$ for all $i \in \{1,\ldots,n\}$. In particular, $a_i \leq b_i = \lfloor a_i L_r \rfloor$ for all $i \in \{1,\ldots,n\}$.
\hfill $\square$

\bigskip

\noindent \textbf{Proof of Proposition $\ref{Proposition:Technical}$}

\noindent We divide the proof into several steps. Let $(u,v) \in \NN^2_0$. At first we prove: 
\begin{equation}
(u,v) \in \Delta^{\im}_+(\Gamma_r), u \geq v + r - 1 \implies \Phi^{-1}_r(u,v) + (0,1) \in \Delta^{\im}_+(\Gamma_r).
\end{equation} 
Assume that $(u,v) \in \Delta^{\im}_+(\Gamma_r)$ is an imaginary root such that $u \geq v + r - 1$. Clearly $\Phi_r^{-1}(u,v)$ is an imaginary root. We distinguish two cases.  If $\Phi_r^{-1}(u,v)_1 > \Phi_r^{-1}(u,v)_2$, then the result is trivial. Hence we can assume that $\Phi_r^{-1}(u,v)_1 \leq \Phi_r^{-1}(u,v)_2$. We have $\Phi_r^{-1}(u,v)_1 \leq \Phi_r^{-1}(u,v)_2 + 1$ and $L_r - r < 0$ implies
\begin{align*}
\Phi_r^{-1}(u,v)_2  +  1 - L_r \Phi_r^{-1}(u,v)_1 &=  -ru + (r^2-1)v + 1 - L_r(-u+rv) \\
&= u(-r + L_r) + v(r^2-1-r L_r) + 1 \\
&\leq (v+r-1)(-r+L_r)+v(r^2-1-r L_r) +1 \\
&= v(r^2-r+L_r-r L_r-1) +(-r^2+r-L_r+r L_r+1) = q(v-1),
\end{align*}
where $q:=  r^2-r+L_r-L_r r-1$. By $(\star)$ we have $q = \frac{1}{L_r}(r-1)-1 \leq 0$, and conclude 
\[ \Phi_r^{-1}(u,v)_1 \leq \Phi_r^{-1}(u,v)_2  +  1 \leq L_r \Phi_r^{-1}(u,v)_1.\]
Therefore $(\blacktriangledown)$ implies that $\Phi_r^{-1}(u,v) + (0,1)\in \Delta^{\im}_+(\Gamma_r)$. \hfill $\diamond$\\
Now we show show the following equation: 
\begin{equation}
q_{\Gamma_r}(u-1,v-(r-1)) = q_{\Gamma_r}(u,v) + u(r^2-r-2) + v(-r+2) + 2 - r.
\end{equation}
\noindent We have
\begin{align*}
 q_{\Gamma_r}(u-1,v-(r-1)) &= q_{\Gamma_r}(u,v) + q_{\Gamma_r}(1,r-1) - \langle (u,v),(1,r-1) \rangle_{\Gamma_r} -  \langle(1,r-1),(u,v) \rangle_{\Gamma_r}\\
&= q_{\Gamma_r}(u,v)+2-r- 2(u+v(r-1))+r(u(r-1)+v) \\
&= q_{\Gamma_r}(u,v) + u(r^2-r-2) + v(-r+2) + 2 - r.
\end{align*}
\hfill $\diamond$
\begin{equation}
(x',y') := \Phi_r(u-1,v-(r-1)) \  \text{satisfies} \ q_{\Gamma_r}(x',y') + y' - x' = q_{\Gamma_r}(u,v) + v - u.
\end{equation}
Since $q_{\Gamma_r}$ is invariant under $\Phi_r$, we conclude with $(2)$ that
\begin{align*}
q_{\Gamma_r}(x',y') + y' - x' &= q_{\Gamma_r}(u,v) + u(r^2-r-2) + v(-r+2) + 2 - r \\
&+ r(u-1)-(v-(r-1)) - ((r^2-1)(u-1)-r(v-(r-1))\\
&= q_{\Gamma_r}(u,v) + u(r^2-r-2+r-(r^2-1)) + v(-r+2-1+r)\\
&+ (2-r-r+(r-1)+(r^2-1)-r(r-1))\\
&= q_{\Gamma_r}(u,v) + v - u. 
\end{align*}
\hfill $\diamond$

\noindent Now we consider $(a-1,b-(r-1))$. We have $a - 1 \leq b - (r-1)  \Leftrightarrow r-2\leq b - a$. Since $b = \lfloor a L_r \rfloor$, we conclude with $(\star)$ that $b - a \geq a L_r - 1 - a = (L_r-1)a - 1\geq (r-1-1)a-1 \geq 2(r-2)- 1 = r - 2 + r - 3 \geq r-2$. Hence $a-1 \leq b-(r-1)$, which proves Proposition $\ref{Proposition:Technical}(i)$.  We conclude with $(2)$ and the assumption that
\begin{align*}
q_{\Gamma_r}(a-1,b-(r-1)) &= q_{\Gamma_r}(a,b) + 2 - r + a(r^2-r-2) + b(-r+2)\\
&= q_{\Gamma_r}(a,b) +b - a + 2 - r + a(r^2-r-1) + b(-r+1)\\
&\leq 2 - r + a(r^2-r-1)+ b(-r+1)\\
&\leq 2 - r + \Phi_r(a,b)_1 - \Phi_r(a,b)_2.
\end{align*}
We set $u := \Phi_r(a,b)_1$ and $v := \Phi_r(a,b)_2$. The assumption $u - v > r - 2$ in conjunction with $(1)$ yields that $(a,b+1)$ is an imaginary root. But this is a contradiction since $b = \lfloor a L_r \rfloor$ is maximal. Hence $q_{\Gamma_r}(a-1,b-(r-1)) \leq 0$ and $(a-1,b-(r-1))$ is an imaginary root. Therefore $(x,y) := \Phi_r(a-1,b-(r-1))$ is also an imaginary root. Hence we have established the second statement of Proposition $\ref{Proposition:Technical}$. The third statement follows immediatly from $(3)$.
\hfill $\square$

\bigskip
\setcounter{equation}{0}
\noindent \textbf{Proof of Lemma $\ref{Lemma:DimensionShiftSum}$}

\noindent In the proof we write $A_i$ instead of $A_i(r)$ for all $i \in \NN$. By definition we have $A_1 := 1$, $A_2 := r$ and $A_{l+2} := r A_{l+1} - A_l$ for all $l \in \NN$. We claim that for all $l \in \NN$
\begin{equation}
 A_{l+1} = L^{l}_r - A_{l} L_r + r A_{l}.
\end{equation} The proof is by induction on $l \in \NN$. We have $A_2 = r = L_r - A_1 L_r + r A_1$ and conclude with the inductive hypothesis
\[A_{l+2} = r A_{l+1} - A_l = r A_{l+1} - (\frac{A_{l+1} - L^l_r}{r-L_r}) \stackrel{(\star)}{=} r A_{l+1} - L_r A_{l+1} + L^{l+1}_r.\]
This proves $(1)$. \hfill $\diamond$\\

\noindent Now we prove $(i)$ by induction on $l$. We let $(x_l,y_l) := \sum^{l}_{i=0} \Phi^{-i}_r (a,b)$ and get for $l = 1$ that $(x_l,y_l) = (rb,r^2 b -ra)$. Hence
\[x_1 L_r - y_1 = ra + (r L_r - r^2) b \stackrel{(\star)}{=} ra - r \frac{1}{L_r} b = \frac{r}{L_r}(a L_r -b) = \frac{A_2}{L_r}(aL_r -b).\]
We have $(x_{l+1},y_{l+1}) = \sum^{l+1}_{i=0} \Phi^{-i}_r (a,b) = (a,b) + \sum^{l}_{i=0} \Phi^{-i}_r \Phi^{-1}_r(a,b)$. We apply the inductive hypothesis to $\Phi^{-1}_r(a,b) = (rb-a,(r^2-1)b-ra)$ and get
\begin{align*}
x_{l+1} L_r - y_{l+1}  &= a L_r  - b + \frac{A_{l+1}}{L^l_r} ((rb-a)L_r - (r^2-1)b+ra)  \\
 &\stackrel{(\star)}{=}  a L_r  - b + \frac{A_{l+1}}{L^l_r} (r(r-\frac{1}{L_r})b - (r^2-1)b+(r-L_r)a) \\
  &=  a L_r  - b + \frac{A_{l+1}}{L^l_r} ((b- L_r a) - \frac{r}{L_r}b +ra) =  a L_r  - b + \frac{A_{l+1}}{L^l_r} ((b- L_r a) + \frac{r}{L_r}(aL_r-b)) \\
  &= (1 -  \frac{A_{l+1}}{L^l_r} + \frac{r A_{l+1}}{L^{l+1}_r})  (a L_r  - b) = (\frac{L^{l+1}_r - A_{l+1} L_r+ r A_{l+1}}{L^{l+1}_r})  (a L_r  - b). \\
  \end{align*} 

\noindent We conclude with $(1)$ that $x_{l+1} L_r - y_{l+1} =  \frac{A_{l+2}}{L^{l+1}_r}(a L_r - b)$. This finishes the proof of $(i)$. The statement $(ii)$ follows also by induction on $l$. We have $A_1 = 1 < L_r$. Now assume that $A_{l} \leq L^{l}_r$. We conclude with $(1)$ that 
\[ A_{l+1} =  L^{l}_r - A_{l} L_r + r A_{l} \stackrel{(\star)}{=} L^l_r + \frac{A_l}{L_r} \leq L^l_r + L^{l-1}_r = L^{l-1}_r (L_r + 1) \leq L^{l-1}_r L^2_r = L^{l+1}_r.\]
This proves $(ii)$. \hfill $\square$

\normalsize

\section*{Acknowledgement}
\noindent I would like to thank Rolf Farnsteiner for helpful comments and suggestions which have helped to improve the structure of this article.


\begin{bibdiv}
\begin{biblist}
\addcontentsline{toc}{chapter}{\textbf{Bibliography}}
\bib{Assem1}{book}{
title={Elements of the Representation Theory of Associative Algebras, I},
subtitle={Techniques of Representation Theory},
series={London Mathematical Society Student Texts},
author={I. Assem},
author={D. Simson},
author={A. Skowro\'nski},
publisher={Cambridge University Press},
date={2006},
address={Cambridge}
}

\bib{Bi2}{article}{
title={Representations of Regular Trees and Invariants of AR-Components for Generalized Kronecker Quivers},
author={D. Bissinger},
date={2018},
journal={Algebras and Representation Theory},
volume={21},
number={2},
pages={331-358}
}

\bib{Bi3}{article}{
title={Indecomposable Jordan types of Loewy length $2$},
author={D. Bissinger},
date={2019},
status={to appear in J. Algebra, arXiv:1903.07523}
}

\bib{Be1}{book}{
title={Representations of elementary abelian p-groups and vector bundles},
author={D. Benson},
publisher={Cambridge University Press},
series={Cambridge Tracts in Mathematics},
volume={208},
date={2016},
}

\bib{Gab2}{article}{
title={Covering spaces in representation theory},
author={K. Bongartz},
author={P. Gabriel},
journal={Invent. Math.},	
year={1981/82},
pages = {331-378},
volume = {65},
}

\bib{CFP2}{article}{
title={Modules of Constant Jordan Type},
author={Carlson, J. F.},
author={Friedlander, E. M.},
author={Pevtsova, J.},
date={2008},
journal={J. Reine Angew. Math.},
volume={614},
pages={191-234}
}

\bib{BoChen1}{article}{
title={Dimension vectors in regular components over wild
Kronecker quivers},
journal={Bulletin des Sciences Math\'{e}matiques},
volume={137},
pages={730-745},
author={B. Chen},
date={2013}
}

\bib{CB1}{webpage}{
title={Geometry of representations of algebras},
author={W. Crawley-Boevey},
url={https://www.math.uni-bielefeld.de/~wcrawley/geomreps.pdf}
date={1993}
}

\bib{CFS1}{article}{
title={Modules for $\ZZ_p \times \ZZ_p$},
author={J. F. Carlson},
author={E. M. Friedlander},
author={A. Suslin},
journal={Commentarii Math. Helv.},
date={2011},
volume={86},
pages={609-657}
}

\bib{Gab3}{book}{
title={The universal cover of a representation finite algebra},
series={Representations of Algebras, Lecture Notes in Mathematics},
volume={903},
pages={68-105},
date={1981},
author={P. Gabriel},
publisher={Springer}
address={Berlin}
}

\bib{Kac3}{article}{
title={Infinite root systems, representations of graphs and invariant theory, II},
author={V. G. Kac},
journal={J. Algebra},
volume={78},
date={1982},
number={1}
pages={141–-162}
}

\bib{Kac2}{book}{
title={Root systems, representations of quivers and invariant theory},
series={Invariant theory(Montecatini), Lecture Notes in Mathematics},
volume={996},
pages={74-108},
date={1982},
publisher={Springer}
author={V. G. Kac},
address={Berlin}
}

\bib{Ker1}{article}{
title={Exceptional Components of Wild Hereditary Algebras},
author={O. Kerner},
journal={Journal of Algebra},
volume={152},
pages={184-206},
number={1},
date={1992}
}

\bib{Ker3}{article}{
title={Representations of Wild Quivers},
journal={Representation theory of algebras and related topics, CMS Conf. Proc.},
volume={19},
date={1996},
pages={65-107}, 
author={O. Kerner},
}

\bib{Ri7}{webpage}{
title={Covering Theory},
author={C.M. Ringel},
url={https://www.math.uni-bielefeld.de/~ringel/lectures/izmir/izmir-6.pdf}
}

\bib{Ri10}{article}{
title={The elementary 3-Kronecker modules}
author={C.M. Ringel},
status={Preprint, arXiv:1612.09141},
date={2016}
}

\bib{Ri11}{article}{
title={The shift orbits of the graded Kronecker modules.}
author={C.M. Ringel},
journal={Math. Z.},
volume={290},
date={2018},
pages={1199--1222}
}

\bib{Assem2}{book}{
title={Elements of the Representation Theory of Associative Algebras, II},
subtitle={Tubes and Concealed Algebras of Euclidean Type},
series={London Mathematical Society Student Texts},
author={D. Simson},
author={A. Skowro\'nski},
publisher={Cambridge University Press},
date={2007},
address={Cambridge}
}

\bib{Assem3}{book}{
title={Elements of the Representation Theory of Associative Algebras, III},
subtitle={Representation-Infinite Tilted Algebras},
series={London Mathematical Society Student Texts},
author={D. Simson},
author={A. Skowro\'nski},
publisher={Cambridge University Press},
date={2007},
address={Cambridge}
}

\bib{Weist}{article}{
title={Tree modules},
author={T. Weist},
date={2012},
journal={Bull. London. Math. Soc.},
volume={44}
issue={5},
pages={882-898}
}

\bib{We1}{article}{
title={Spaces of matrices of fixed rank},
author={R. Westwick},
date={1987},
journal={Linear and Multilinear Algebra},
volume={20},
pages={171-174}
}

\bib{Wiede1}{webpage}{
author ={M. Wiedemann},
title={On real root representations of quivers},
url={https://www.math.uni-bielefeld.de/~wcrawley/Wiedemann-thesis.pdf},
date={2008}
}

\bib{Wiede2}{article}{
title={Quiver representations of maximal rank type and an application to representations of a quiver with three vertices},
author={M. Wiedemann},
journal={Bulletin of the London Mathematical Society},
volume={40},
date={2008},
pages={479–-492}
}

\bib{Wor1}{article}{
title={Categories of modules for elementary abelian p-groups and generalized Beilinson algebras},
author={J. Worch},
journal={J. London Math. Soc.},
volume={88},
date={2013},
pages={649-668}
}

\end{biblist}
\end{bibdiv}

\end{document}